\definecolor{cblack}{rgb}{0,0,0}
\definecolor{cblue}{rgb}{0.121569,0.466667,0.705882}    % 31,  119, 180
\definecolor{corange}{rgb}{1.000000,0.498039,0.054902}  % 256, 127, 14
\definecolor{cgreen}{rgb}{0.172549,0.627451,0.172549}   % 44,  160, 44
\definecolor{cred}{rgb}{0.839216,0.152941,0.156863}     % 214, 39,  40
\definecolor{cpurple}{rgb}{0.580392,0.403922,0.741176}  % 149, 103, 190
\definecolor{cbrown}{rgb}{0.549020,0.337255,0.294118}   % 141, 86,  75
\definecolor{cpink}{rgb}{0.890196,0.466667,0.760784}
\definecolor{cgray}{rgb}{0.498039,0.498039,0.498039}
\definecolor{cgreen2}{rgb}{0.7372549019607844, 0.7411764705882353, 0.13333333333333333}
\newtheorem{theorem}{Theorem}[section]
\newtheorem{remark}[theorem]{Remark}
\newtheorem{lemma}[theorem]{Lemma}
\newtheorem{definition}[theorem]{Definition}
\newtheorem{proposition}[theorem]{Proposition}
\newtheorem{conjecture}[theorem]{Conjecture}
\theoremstyle{plain} % just in case the style had changed
\newcommand{\thistheoremname}{}
\newtheorem*{genericthm}{\thistheoremname}
\newcommand{\Holder}{H\"{o}lder}
\newcommand{\Mobius}{M\"{o}bius}
\newcommand{\what}{\widehat}
\renewcommand{\emptyset}{\varnothing}
\def\moverlay{\mathpalette\mov@rlay}
\def\mov@rlay#1#2{\leavevmode\vtop{%
   \baselineskip\z@skip \lineskiplimit-\maxdimen
   \ialign{\hfil$\m@th#1##$\hfil\cr#2\crcr}}}
\newcommand{\charfusion}[3][\mathord]{
    #1{\ifx#1\mathop\vphantom{#2}\fi
        \mathpalette\mov@rlay{#2\cr#3}
      }
    \ifx#1\mathop\expandafter\displaylimits\fi}
\newcommand{\CC}{\mathbb{C}}
\newcommand{\EE}{\mathbb{E}}
\newcommand{\NN}{\mathbb{N}}
\newcommand{\PP}{\mathbb{P}}
\newcommand{\RR}{\mathbb{R}}
\DeclareSymbolFont{bbold}{U}{bbold}{m}{n}
\DeclareSymbolFontAlphabet{\mathbbold}{bbold}
\newcommand{\One}{\mathbbold{1}}
\newcommand{\bv}{\bm v}
\newcommand{\bA}{\bm A}
\newcommand{\bB}{\bm B}
\newcommand{\bC}{\bm C}
\newcommand{\bD}{\bm D}
\newcommand{\bG}{\bm G}
\newcommand{\bH}{\bm H}
\newcommand{\bP}{\bm P}
\newcommand{\bQ}{\bm Q}
\newcommand{\bU}{\bm U}
\newcommand{\bV}{\bm V}
\newcommand{\bX}{\bm X}
\newcommand{\bY}{\bm Y}
\newcommand{\sA}{\mathcal{A}}
\newcommand{\sN}{\mathcal{N}}
\newcommand{\sU}{\mathcal{U}}
\newcommand{\Mod}[1]{\ (\mathrm{mod}\ #1)}
\DeclareSymbolFont{sfoperators}{OT1}{cmss}{m}{n}
\DeclareSymbolFontAlphabet{\mathsf}{sfoperators}
\renewcommand{\operator@font}{\mathgroup\symsfoperators}
\DeclareMathOperator{\Part}{Part}
\DeclareMathOperator{\Tr}{Tr}
\DeclareMathOperator{\Unif}{Unif}
\DeclareMathOperator{\Var}{Var}
\newcommand{\Ex}{\mathop{\mathbb{E}}}  % puts symbols below
\newcommand{\Px}{\mathop{\mathbb{P}}}
\newcommand{\Ber}{\mathsf{Ber}}
\newcommand{\runs}{\mathsf{runs}}
\newcommand{\MANOVA}{\mathsf{MANOVA}}
\newcommand{\DE}{\mathsf{DE}}
\newcommand{\cyc}{\mathsf{cyc}}
\newcommand{\id}{\mathsf{id}}
\newcommand{\edge}{\mathsf{edge}}
\newcommand\numberthis{\addtocounter{equation}{1}\tag{\theequation}}
\title{Generic MANOVA limit theorems for products of projections}
\date{January 23, 2023}
\author{Dmitriy Kunisky\thanks{Email: \texttt{dmitriy.kunisky@yale.edu}. Partially supported by ONR Award N00014-20-1-2335, a Simons Investigator Award to Daniel Spielman, and NSF grants DMS-1712730 and DMS-1719545.}}
\affil{Department of Computer Science, Yale University}
\begin{document}

\maketitle

\thispagestyle{empty}

\begin{abstract}
    We study the convergence of the empirical spectral distribution of $\bm A \bm B \bm A$ for $N \times N$ orthogonal projection matrices $\bm A$ and $\bm B$, where $\frac{1}{N}\Tr(\bm A)$ and $\frac{1}{N}\Tr(\bm B)$ converge as $N \to \infty$, to Wachter's MANOVA law.
    Using free probability, we show mild sufficient conditions for convergence in moments and in probability, and use this to prove a conjecture of Haikin, Zamir, and Gavish (2017) on random subsets of unit-norm tight frames.
    This result generalizes previous ones of Farrell (2011) and Magsino, Mixon, and Parshall (2021).
    We also derive an explicit recursion for the difference between the empirical moments $\frac{1}{N}\Tr((\bm A \bm B \bm A)^k)$ and the limiting MANOVA moments, and use this to prove a sufficient condition for convergence in probability of the largest eigenvalue of $\bm A \bm B \bm A$ to the right edge of the support of the limiting law in the special case where that law belongs to the Kesten-McKay family.
    As an application, we give a new proof of convergence in probability of the largest eigenvalue when $\bm B$ is unitarily invariant; equivalently, this determines the limiting operator norm of a rectangular submatrix of size $\frac{1}{2}N \times \alpha N$ of a Haar-distributed $N \times N$ unitary matrix for any $\alpha \in (0, 1)$.
    Unlike previous proofs, we use only moment calculations and non-asymptotic bounds on the unitary Weingarten function, which we believe should pave the way to analyzing the largest eigenvalue for products of random projections having other distributions.
\end{abstract}

\clearpage

\tableofcontents

\pagestyle{empty}

\clearpage

\setcounter{page}{1}
\pagestyle{plain}

\section{Introduction and main results}

The theory of free probability is a powerful framework for understanding the spectra of many constructions of random matrices.
However, the most general tools of free probability apply only to ``very random'' collections of matrices, that have, for example, been conjugated by Haar-distributed orthogonal or unitary transformations, ensuring that their frames of eigenvectors are correlated as little as possible.
It is natural to ask: are so many bits of randomness really necessary for various random matrix limit theorems to hold, or do these limits still hold for much more structured matrices?

In this paper, we will study one situation where a classical free probability result may be partly ``derandomized.''
Our work is inspired by a conjecture of~\cite{HZG-2017-MANOVA} on the spectra of random subsets of frames, which we will prove and extend in several directions.
Let us first present the free probability result that we will view their conjecture as derandomizing.\footnote{This point of view on the conjecture of \cite{HZG-2017-MANOVA} does not appear to have been explicitly stated in prior work, though similar earlier results \cite{TVCS-2007-GaussianErasureChannel, TCSV-2010-CapacityChannelsFadingWachter, FN-2013-TruncationsTensorHaarUnitary, AF-2014-AsymptoticallyLiberatingUnitaryMatrices} used the tools of free probability, and \cite{MMP-2019-RandomSubensembles}, who proved part of the conjecture of \cite{HZG-2017-MANOVA}, noticed that similar combinatorial objects arise in their calculations as those underlying free probability computations (in particular, summations over non-crossing partitions).}
The result concerns the following family of probability measures.
\begin{definition}[Wachter's MANOVA law \cite{Wachter-1980-EmpiricalMeasureDiscriminantRatios}]
    \label{def:manova}
    For two parameters $\alpha, \beta \in (0, 1)$, we say a random variable has the law $\MANOVA(\alpha, \beta) = \MANOVA(\beta, \alpha)$ if it has the following density with respect to Lebesgue measure:
    \begin{equation}
        d\mu(x) = \frac{\sqrt{(r_+ - x)(x - r_-)}}{2\pi x(1 - x)} \One_{[r_-, r_+]}(x)dx + (1 - \min\{\alpha, \beta\})\delta_0(x) + \max\{\alpha + \beta - 1, 0\}\delta_1(x),
    \end{equation}
    where $\delta_c$ denotes a Dirac mass at $c$, and
    \begin{align*}
      r_{\pm} = r_{\pm}(\alpha, \beta) &\colonequals \alpha + \beta - 2\alpha\beta \pm 2\sqrt{\alpha(1 - \alpha)\beta(1 - \beta)} \\
              &= \left(\sqrt{\alpha(1 - \beta)} \pm \sqrt{\beta(1 - \alpha)}\right)^2.
    \end{align*}
    Let us also write
    \begin{equation}
        \edge(\alpha, \beta) \colonequals \left\{ \begin{array}{ll} 1 & \text{if } \alpha + \beta > 1, \\
                                                    r_+(\alpha, \beta) & \text{if } \alpha + \beta \leq 1 \end{array} \right\},
                                        \end{equation}
                                        which is the right edge of the support of $\MANOVA(\alpha, \beta)$.
\end{definition}

\begin{remark}[MANOVA parametrizations]
    We take our parametrization of the MANOVA distributions from the free probability literature cited below; see also the discussion in~\cite{CK-2014-LiberationProjections} for an overview.
    Some works in signal processing, such as~\cite{Farrell-2011-LimitingEmpirical}, have also used this parametrization.
    This parametrization seems especially natural to us in highlighting the symmetric roles of $\alpha$ and $\beta$.
    However, some results in this area use different parametrizations, whose relations to this one we explain in detail in Appendix~\ref{app:manova-param}.
\end{remark}

The following is a straightforward computation when we use the tools of free probability; see the references below or our Section~\ref{sec:free-prob} for details.
For a Hermitian matrix $\bX \in \CC^{N \times N}$ with eigenvalues $\lambda_1, \dots, \lambda_N \in \RR$, we call $\frac{1}{N}\sum_{i = 1}^N \delta_{\lambda_i}$ its \emph{empirical spectral distribution (e.s.d.)}.

\begin{definition}[Convergence of random measures]
    \label{def:measure-conv}
    For $\mu_i$ random probability measures on $\RR$ (and its Borel $\sigma$-algebra) for $i \in \NN$ and $\mu$ a deterministic probability measure on $\RR$, we say that the $\mu_i$ \emph{converge in moments} to $\mu$ if, for all $k \geq 1$, $\lim_{i \to \infty} \EE \int x^k d\mu_i(x) = \int x^k d\mu(x)$ with the right-hand side finite.
    We say that $\mu_i$ \emph{converge in probability (respectively, almost surely)} to $\mu$ if, for every continuous and compactly supported function $f: \RR \to \RR$, $\int f(x) d\mu_i(x) \to \int f(x) d\mu(x)$ in probability (respectively, almost surely).
\end{definition}
\begin{theorem}[Example 3.6.7 of \cite{VDN-1992-FreeRandomVariables}, Exercise 18.27 of \cite{NS-2006-LecturesCombinatoricsFreeProbability}]
    \label{thm:invariant-model}
    Let $N = N(\iota) \in \NN$ be an increasing sequence.
    Let $\bA, \bB \in \CC^{N \times N}$ be random orthogonal projection matrices\footnote{The reader will recall that being an orthogonal projection matrix is equivalent to being a Hermitian projection matrix.} such that $\frac{1}{N}\Tr(\bA) \to \alpha$ and $\frac{1}{N}\Tr(\bB) \to \beta$ as $\iota \to \infty$ for some $\alpha, \beta \in (0, 1)$.
    Let $\bU \in \sU(N)$ be a Haar-distributed unitary matrix.
    Then, the e.s.d.\ of $\bU\bA \bU^{*} \bB \bU \bA \bU^{*}$ converges in moments to $\MANOVA(\alpha, \beta)$.
\end{theorem}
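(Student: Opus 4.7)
The plan is to derive the theorem from Voiculescu's asymptotic freeness theorem for Haar unitary conjugation, combined with the standard computation of the free multiplicative convolution of two free projections. The proof proceeds in three steps: (i) identify the limiting expected e.s.d.\ of each factor; (ii) invoke asymptotic freeness to reduce the joint moments to a computation in an abstract tracial $W^{*}$-probability space; and (iii) identify the resulting limiting law with $\MANOVA(\alpha, \beta)$.

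For step (i), since $\bA$ is an orthogonal projection its eigenvalues lie in $\{0, 1\}$, with the proportion equal to $1$ exactly $\frac{1}{N} \Tr(\bA) \to \alpha$; hence the expected e.s.d.\ of $\bA$---and thus also of $\bU \bA \bU^{*}$, which has the same spectrum---converges in moments to the Bernoulli law $\mu_{\alpha} \colonequals (1 - \alpha) \delta_0 + \alpha \delta_1$, and analogously for $\bB$ and $\mu_{\beta}$. For step (ii), since $\bU$ is Haar on $\sU(N)$ and independent of $(\bA, \bB)$, the standard form of Voiculescu's theorem valid for random matrices independent of a Haar unitary gives that $(\bU \bA \bU^{*}, \bB)$ is asymptotically free in expectation, meaning that for every noncommutative word $w$ in two variables,
\begin{equation*}
\EE \, \tfrac{1}{N} \Tr\bigl( w(\bU \bA \bU^{*}, \bB) \bigr) \;\longrightarrow\; \tau\bigl( w(p, q) \bigr),
\end{equation*}
where $p, q$ are free projections with $\tau(p) = \alpha$ and $\tau(q) = \beta$ in some tracial $W^{*}$-probability space $(\sA, \tau)$. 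Applying this with $w(x, y) = (xyx)^{k}$ yields convergence of the $k$-th expected moment of the e.s.d.\ of $\bU \bA \bU^{*} \bB \bU \bA \bU^{*}$ to $\tau((pqp)^{k})$.

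For step (iii), it remains to identify $\tau((pqp)^{k})$ with the $k$-th moment of $\MANOVA(\alpha, \beta)$. Using $p^{2} = p$ and traciality, $\tau((pqp)^{k}) = \tau((pq)^{k})$, so (apart from the contribution of vectors annihilated by $p$, which produces an atom at $0$) the limiting distribution is given by the free multiplicative convolution $\mu_{\alpha} \boxtimes \mu_{\beta}$. Computing with S-transforms, one has $S_{\mu_{\alpha}}(z) = (1+z)/(\alpha+z)$ and analogously for $\mu_{\beta}$, so multiplicativity gives $S_{\mu_{\alpha} \boxtimes \mu_{\beta}}(z) = (1+z)^{2}/((\alpha+z)(\beta+z))$; inverting to the Cauchy transform and applying Stieltjes inversion recovers the absolutely continuous density on $[r_{-}, r_{+}]$. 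I expect the main obstacle to be the bookkeeping for the two atomic masses---in particular, the atom at $1$ is present precisely when $\alpha + \beta > 1$, since only then must the two projections have nontrivial common range---but this dimensional accounting is standard and carried out in the references cited above, yielding the claimed masses $1 - \min(\alpha, \beta)$ at $0$ and $\max(\alpha + \beta - 1, 0)$ at $1$.
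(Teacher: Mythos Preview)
Your proposal is correct and follows essentially the same route as the paper: both invoke Voiculescu's asymptotic freeness for Haar-conjugated matrices (the paper's Theorem~\ref{thm:invariant-freeness}), note that the limiting law of $\bU\bA\bU^{*}\bB\bU\bA\bU^{*}$ is therefore $\Ber(\alpha) \boxtimes \Ber(\beta)$, and then identify this free multiplicative convolution with $\MANOVA(\alpha,\beta)$ via the $S$-transform computation (the paper's Proposition~\ref{prop:manova-free-conv}, which simply cites the same references in the theorem's attribution). Your sketch is slightly more explicit about the $S$-transform and the atom bookkeeping, but there is no substantive difference.
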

\noindent
The underlying phenomenon in the language of free probability is that the (sequences of) matrices $\bU\bA\bU^{*}$ and $\bB$ are \emph{asymptotically free}, and the MANOVA law is the limiting law of $(\bU\bA\bU^{*})^{1/2}\bB(\bU\bA\bU^{*})^{1/2} = \bU\bA\bU^{*}\bB\bU\bA\bU^{*}$ by virtue of being the \emph{free multiplicative convolution} of the limiting e.s.d.'s of $\bA$ and $\bB$, which are the Bernoulli distributions $\Ber(\alpha)$ and $\Ber(\beta)$, respectively.
In standard free probability notation,
\begin{equation}
    \label{eq:manova-free-conv}
    \MANOVA(\alpha, \beta) = \Ber(\alpha) \boxtimes \Ber(\beta).
\end{equation}
This much is assured by the technology of free probability; with a little more effort, one may also show that the same convergence occurs in probability or, taking a suitable subsequence of $N$, almost surely.
See Remark~\ref{rem:almost-sure} and Section~\ref{sec:moment-background} for further discussion.

\subsection{Weak convergence of empirical spectral distribution}

The following, our first main result, gives a much weaker condition on a sequence of pairs of random projections under which the conclusion of Theorem~\ref{thm:invariant-model} still holds.
\begin{theorem}
    \label{thm:convergence-moments}
    Let $N = N(\iota)$ be an increasing sequence.
    Let $\bA, \bB \in \CC^{N \times N}$ be random orthogonal projection matrices.
    Suppose that, for some $\alpha, \beta \in (0, 1)$,
    \begin{align}
      \lim_{\iota \to \infty} \frac{1}{N} \EE \Tr(\bA - \alpha \bm I_N) = \lim_{\iota \to \infty} \left(\frac{1}{N} \EE \Tr(\bA) - \alpha\right) &= 0, \\
      \lim_{\iota \to \infty} \frac{1}{N} \EE \Tr(\bB - \beta \bm I_N) = \lim_{\iota \to \infty} \left(\frac{1}{N} \EE \Tr(\bB) - \beta\right) &= 0,
      \intertext{and, for all $k \geq 1$,}
      \lim_{\iota \to \infty} \frac{1}{N} \EE \Tr\big((\bA - \alpha \bm I_N)(\bB - \beta \bm I_N)\big)^k &= 0. \label{eq:asymp-free-relation}
    \end{align}
    Then, the e.s.d.\ of $\bA\bB\bA$ converges in moments to $\MANOVA(\alpha, \beta)$.
\end{theorem}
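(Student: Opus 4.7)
The plan is to compute $\frac{1}{N}\EE\Tr((\bA\bB\bA)^k)$ by centering $\bA$ and $\bB$ around $\alpha \bm I_N$ and $\beta \bm I_N$, reducing everything to traces of alternating products of the centered projections, and then appealing to the hypotheses together with the known invariant case Theorem~\ref{thm:invariant-model} to identify the limit as the $k$-th moment of $\MANOVA(\alpha,\beta)$.

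First, since $\bA^2 = \bA$ and trace is cyclic, $\Tr((\bA\bB\bA)^k) = \Tr((\bA\bB)^k)$. Writing $\bA' := \bA - \alpha\bm I_N$ and $\bB' := \bB - \beta\bm I_N$, I would expand $(\bA\bB)^k = \prod_{i=1}^k(\alpha\bm I_N + \bA')(\beta\bm I_N + \bB')$ into a finite sum: each of its $2^{2k}$ terms is a scalar in $\alpha, \beta$ times a trace of some word in $\bA', \bB'$ obtained by deleting the identity factors. Second, I would reduce each such trace to a canonical form using the quadratic relations $\bA'^2 = (1-2\alpha)\bA' + \alpha(1-\alpha)\bm I_N$ and $\bB'^2 = (1-2\beta)\bB' + \beta(1-\beta)\bm I_N$, which follow from $\bA^2=\bA$ and $\bB^2=\bB$. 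Combined with cyclic invariance of trace (used to bring any two like letters adjacent so that the quadratic reduction applies), an induction on word length shows that the trace of any word in $\bA', \bB'$ is a polynomial in $\alpha, \beta$ times a linear combination of $1$, $\tfrac{1}{N}\Tr(\bA')$, $\tfrac{1}{N}\Tr(\bB')$, and the alternating traces $\tfrac{1}{N}\Tr((\bA'\bB')^m)$ for $1 \leq m \leq k$. Taking expectations, this gives a \emph{universal} identity
\[
    \tfrac{1}{N}\EE\Tr((\bA\bB)^k) \,=\, R_k(\alpha,\beta) \,+\, p_k^A(\alpha,\beta)\tfrac{1}{N}\EE\Tr(\bA') \,+\, p_k^B(\alpha,\beta)\tfrac{1}{N}\EE\Tr(\bB') \,+\, \sum_{m=1}^{k} c_{k,m}(\alpha,\beta)\tfrac{1}{N}\EE\Tr((\bA'\bB')^m),
\]
where $R_k, p_k^A, p_k^B, c_{k,m}$ are fixed polynomials in $\alpha, \beta$ independent of the joint law of $(\bA, \bB)$ and of $N$.

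Third, the three hypotheses say exactly that each of $\tfrac{1}{N}\EE\Tr(\bA')$, $\tfrac{1}{N}\EE\Tr(\bB')$, and $\tfrac{1}{N}\EE\Tr((\bA'\bB')^m)$ tends to $0$ as $\iota \to \infty$, so $\tfrac{1}{N}\EE\Tr((\bA\bB\bA)^k) \to R_k(\alpha,\beta)$. To identify $R_k(\alpha,\beta)$ with the $k$-th MANOVA moment, I would apply the \emph{same} universal identity to the invariant model with $\bA$ replaced by $\bU\bA\bU^*$. In that setting, classical asymptotic freeness of $\bU\bA\bU^*$ and $\bB$ (available from the references underlying Theorem~\ref{thm:invariant-model}) gives the vanishing of all the centered mixed traces, so the identity again collapses to $R_k(\alpha,\beta)$; but Theorem~\ref{thm:invariant-model} already tells us that this limit is the $k$-th moment of $\MANOVA(\alpha,\beta)$, completing the identification.

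The main obstacle I anticipate is the bookkeeping needed to make the reduction step rigorous: one must show that iterating the two quadratic relations together with cyclic shifts really does terminate with a decomposition into exactly the four canonical trace types claimed, with polynomial coefficients of controlled degree. An induction in which each step strictly decreases either the length of the word or the number of same-letter adjacencies should suffice, but handling words of odd length — whose reductions go through a cyclic shift to bring the endpoints together before applying the quadratic relation, and which then produce shorter boundary contributions involving $\tfrac{1}{N}\Tr(\bA')$ or $\tfrac{1}{N}\Tr(\bB')$ — requires a careful argument to keep all cases uniform.
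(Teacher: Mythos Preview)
Your proposal is correct and is essentially the same as the paper's second proof (Proof~2 of Theorem~\ref{thm:convergence-moments}). The paper actually runs the triangular relation in the opposite direction---expanding $\Tr\big((\bA-\alpha\bm I_N)(\bB-\beta\bm I_N)\big)^k$ as a combination of $\Tr(\bm I_N)$, $\Tr(\bA)$, $\Tr(\bB)$, and $\Tr(\bA\bB\bA)^\ell$ for $\ell\leq k$ with leading coefficient $1$, then inverting recursively---but this is the same universal identity you derive, and both versions finish by comparing to the unitarily invariant model via Theorem~\ref{thm:invariant-model} to identify the constant term as the MANOVA moment.
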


\begin{remark}[Asymptotic freeness]
    In fact, our proof will imply the much more consequential statement that, under the assumptions of Theorem~\ref{thm:convergence-moments}, $\bA$ and $\bB$ are, just as in the setting of Theorem~\ref{thm:invariant-model}, asymptotically free.
    The crux of the matter is just that, while for general matrices asymptotic freeness describes a large number of limits of expectations of polynomials, for the special case of projections, thanks to their idempotence, asymptotic freeness reduces to just the one-parameter family of limits \eqref{eq:asymp-free-relation}.
    When these hold, free probability also allows us to compute the limiting spectral moments of polynomials in $\bA$ and $\bB$ such as the sum $\bA + \bB$, the Hermitian commutator $(\bA\bB - \bB\bA)^2$, and so forth.
\end{remark}

We also provide a stronger version of our result that guarantees convergence in probability.
Conveniently, it simply suffices to replace all convergences in expectation above with convergences in $L^2$.
\begin{theorem}
    \label{thm:convergence-prob}
    Let $N = N(\iota)$ be an increasing sequence.
    Let $\bA, \bB \in \CC^{N \times N}$ be random orthogonal projection matrices.
    Suppose that, for some $\alpha, \beta \in (0, 1)$,
    \begin{align}
      \lim_{\iota \to \infty} \EE \left(\frac{1}{N}\Tr(\bA - \alpha \bm I_N)\right)^2 = \lim_{\iota \to \infty} \EE \left(\frac{1}{N}\Tr(\bA) - \alpha\right)^2 &= 0, \\
      \lim_{\iota \to \infty} \EE \left(\frac{1}{N}\Tr(\bB - \beta \bm I_N)\right)^2 = \lim_{\iota \to \infty} \EE \left(\frac{1}{N}\Tr(\bB) - \beta\right)^2 &= 0,
      \intertext{and, for all $k \geq 1$,}
      \lim_{\iota \to \infty} \EE \left(\frac{1}{N}\Tr\big((\bA - \alpha \bm I_N)(\bB - \beta \bm I_N)\big)^k\right)^2 &= 0. \label{eq:asymp-free-relation}
    \end{align}
    Then, the e.s.d.\ of $\bA\bB\bA$ converges in probability to $\MANOVA(\alpha, \beta)$.
\end{theorem}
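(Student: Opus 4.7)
The plan is to bootstrap Theorem~\ref{thm:convergence-moments} by a variance-plus-limit argument. Since the $L^2$ hypotheses here imply the $L^1$ hypotheses of Theorem~\ref{thm:convergence-moments} via Jensen's inequality, that theorem already supplies $\EE\frac{1}{N}\Tr((\bA\bB\bA)^k) \to m_k$ for every $k$, where $m_k$ denotes the $k$-th moment of $\MANOVA(\alpha,\beta)$. It therefore suffices to prove $\Var(\frac{1}{N}\Tr((\bA\bB\bA)^k)) \to 0$ for each $k$: then $\frac{1}{N}\Tr((\bA\bB\bA)^k) \to m_k$ in $L^2$ and hence in probability; and since all relevant spectra lie in $[0,1]$ (as $\bA\bB\bA$ is a product of contractions), Weierstrass approximation upgrades convergence in probability of every polynomial moment into convergence in probability of $\int f\,d\mu_N$ for every $f \in C_c(\RR)$, which is exactly what Definition~\ref{def:measure-conv} demands.

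To control the variance I would revisit the proof of Theorem~\ref{thm:convergence-moments} as a \emph{deterministic} identity. Writing $\bA^\circ := \bA - \alpha\bI_N$ and $\bB^\circ := \bB - \beta\bI_N$, one first uses $\bA^2=\bA$ to write $\Tr((\bA\bB\bA)^k) = \Tr((\bA\bB)^k)$, then expands $(\alpha\bI_N+\bA^\circ)(\beta\bI_N+\bB^\circ)$ in each of the $k$ factors, deletes identity factors in the resulting $2^{2k}$ summands, and collapses any adjacent pair $\bA^\circ\bA^\circ$ (resp.\ $\bB^\circ\bB^\circ$) via the projection identity $(\bA^\circ)^2 = (1-2\alpha)\bA^\circ + \alpha(1-\alpha)\bI_N$ (resp.\ for $\bB^\circ$), combined with cyclic invariance of the trace. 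Iterated to termination, every trace reduces to a linear combination of
\[
\tfrac{1}{N}\Tr(\bA^\circ),\qquad \tfrac{1}{N}\Tr(\bB^\circ),\qquad \tfrac{1}{N}\Tr\bigl((\bA^\circ\bB^\circ)^j\bigr)\ \ (1\le j\le k),
\]
plus a scalar contribution; and the scalar must equal $m_k$, since the identity is deterministic and Theorem~\ref{thm:convergence-moments} already pins down the $\EE$-limit of the left-hand side under our hypotheses. This yields
\[
\tfrac{1}{N}\Tr((\bA\bB\bA)^k) \;=\; m_k + R_k,
\]
where $R_k$ is a \emph{linear} combination of the three base quantities above, with coefficients depending only on $k,\alpha,\beta$. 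Linearity then makes the $L^2$ bound immediate: each base quantity vanishes in $L^2$ by hypothesis, and a finite linear combination with $N$-independent coefficients preserves this, so $\EE[R_k^2] \to 0$.

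The main obstacle is the structural claim just made: that the reduction procedure terminates and outputs only the named base quantities, with coefficients controlled uniformly in $N$. Termination is straightforward, because each application of the projection identity strictly decreases word length and cyclic repositioning does not lengthen a word, so only finitely many reductions occur per summand; coefficient control is automatic since $\alpha,\beta$ are fixed and the recursion depth is $O(k)$. Slightly more delicate is verifying that the odd-length alternating words that can appear at intermediate stages (such as $\Tr((\bA^\circ\bB^\circ)^j \bA^\circ)$) really do reduce back to the three listed families after one more application of the projection identity following a cyclic shift; this is a finite combinatorial check. Once the identity is in place, everything else---matching constants to $m_k$, the $L^2$ argument, and the Weierstrass step passing from moments to weak convergence---is soft analysis.
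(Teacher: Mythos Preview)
Your proposal is correct and follows essentially the same route as the paper: first invoke Theorem~\ref{thm:convergence-moments} for the mean, then establish the deterministic identity expressing $\tfrac{1}{N}\Tr((\bA\bB\bA)^k)-m_k$ as a fixed linear combination of $\tfrac{1}{N}\Tr(\bA^\circ)$, $\tfrac{1}{N}\Tr(\bB^\circ)$, and $\tfrac{1}{N}\Tr((\bA^\circ\bB^\circ)^j)$ to kill the variance, and finally pass from moment convergence to weak convergence via the compact support. The paper packages your reduction procedure as Lemma~\ref{lem:error-term-recursion} (derived from the explicit ``necklace expansion'' in Lemma~\ref{lem:necklace-expansion}) and quotes the moment-to-weak step as Lemma~\ref{lem:conv-prob-var}, but the substance is identical.
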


Compared to Theorem~\ref{thm:invariant-model}, the conjecture of \cite{HZG-2017-MANOVA} proposes reducing the amount of randomness in $\bU\bA\bU^{*}$, a projection to a uniformly random subspace of the same dimension as the image of $\bA$.
Instead, they consider $\bA$ a random \emph{coordinate} projection, i.e., a diagonal projection matrix.
In this case, their conjecture says that, for a large class of deterministic projection matrices $\bB$, the spectrum of such $\bA\bB\bA$ still has a MANOVA limit.

The $\bB$ that the conjecture pertains to originate in frame theory.
\begin{definition}[Frames]
    A \emph{frame} is a collection of vectors $\bv_1, \dots, \bv_N \in \CC^M$.
    We identify a frame with the matrix $\bV \in \CC^{M \times N}$ whose columns are the $\bv_i$.
    A frame has \emph{unit norm} if $\|\bv_i\| = 1$ for all $i \in [N]$, is \emph{tight} if $\sum_{i = 1}^N \bv_i\bv_i^{*} = \beta^{-1} \bm I_M$ for some $\beta > 0$, and is \emph{equiangular} if $|\langle \bv_i, \bv_j \rangle| = c$ for some $c \geq 0$ for all $i, j \in [N]$ distinct.
    We abbreviate \emph{UNTF} for unit-norm tight frames, and \emph{ETF} for equiangular unit-norm tight frames (in particular, every ETF is a UNTF).
\end{definition}
\noindent
For $\bV$ a UNTF, the Gram matrix $\bV^{*}\bV$ is a rescaled projection matrix: it has the same non-zero spectrum as $\bV \bV^{*} = \sum_{i = 1}^N \bv_i\bv_i^{*} = \frac{N}{M} \bm I_M$, where we identify $\beta = \frac{M}{N}$ by taking traces on either side.
Thus, $\bB = \frac{M}{N}\bV^*\bV$ is a projection matrix for any $\bV$ a UNTF, which has $\frac{1}{N}\Tr(\bB) = \beta$.

It was observed by \cite{HZG-2017-MANOVA} that a wide range of $\bB$ built in this way from UNTFs appear to satisfy the conclusion of Theorem~\ref{thm:invariant-model}.
They did not propose definitive criteria for precisely which $\bB$ should satisfy this conclusion, but they conjectured that this should be the case when $\bB$ is built from $\bV$ an ETF, and also perhaps any $\bV$ a UNTF that is ``close enough'' to being equiangular.

Our Theorem~\ref{thm:convergence-prob} almost immediately proves the conjecture of \cite{HZG-2017-MANOVA} for the broad class of UNTFs satisfying only a weak quantitative \emph{incoherence} condition, including all ETFs (see the discussion after the result below).
We may also slightly weaken the unit norm assumption, allowing small fluctuations in the norms of the frame vectors, and allow the frame vectors to be random.
\begin{theorem}
    \label{thm:untf}
    Let $M = M(\iota)$ and $N = N(\iota)$ be increasing sequences with $\frac{M}{N} \to \beta \in (0, 1)$, and $\bV \in \CC^{M \times N}$ be random having $\bV \bV^* = \bm I_M$ (i.e., being a random tight frame).
    Write $\bv_1, \dots, \bv_N \in \CC^M$ for the columns of $\bV$.
    Suppose that there is a constant $C > 0$ such that, for any $k \geq 1$,
    \begin{equation}
        \label{eq:thm:untf:assumption}
        \limsup_{\iota \to \infty} N^{k/2 - C} \max_{i, j \in [N]} \EE[|\langle \bv_i, \bv_j \rangle - \One\{i = j\} \beta|^k] = 0.
    \end{equation}
    Let $\bA \in \RR^{N \times N}$ be a random diagonal matrix whose diagonal entries are drawn i.i.d.\ as $A_{ii} \sim \Ber(\alpha)$ for some $\alpha \in (0, 1)$.
    Then, the e.s.d.\ of $\bA \bV^*\bV \bA$ converges in probability to $\MANOVA(\alpha, \beta)$.
\end{theorem}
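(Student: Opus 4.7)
The plan is to apply Theorem~\ref{thm:convergence-prob} with $\bB \colonequals \bV^*\bV$, which is automatically an $N \times N$ orthogonal projection of rank $M$ since $\bV\bV^* = \bm I_M$ implies $(\bV^*\bV)^2 = \bV^*(\bV\bV^*)\bV = \bV^*\bV$. The first two hypotheses of Theorem~\ref{thm:convergence-prob} are immediate: $\frac{1}{N}\Tr(\bA) = \frac{1}{N}\sum_i A_{ii}$ has variance $\alpha(1-\alpha)/N \to 0$ by the i.i.d.\ Bernoulli structure, while $\frac{1}{N}\Tr(\bV^*\bV) = \frac{1}{N}\Tr(\bV\bV^*) = M/N \to \beta$ is deterministic.

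The real work is verifying the ``asymptotic freeness'' $L^2$-condition. Write $\bA' \colonequals \bA - \alpha\bm I_N$ and $\bB' \colonequals \bV^*\bV - \beta \bm I_N$, and expand
\[\Tr\bigl((\bA'\bB')^k\bigr) = \sum_{i_1,\ldots,i_k \in [N]} \prod_{m=1}^{k} A'_{i_m} B'_{i_m i_{m+1}} \qquad (i_{k+1} \equiv i_1),\]
then square and take the expectation. Using the (implicit) independence of $\bA$ and $\bV$, each pair of closed walks $(\bi, \bj)$ contributes a product of an $\bA$-expectation and a $\bV$-expectation. Since the $A'_i$ are mean-zero and independent, the $\bA$-factor vanishes unless every distinct index among $\{i_1,\ldots,i_k,j_1,\ldots,j_k\}$ appears at least twice, which forces at most $k$ distinct indices and restricts the sum to $O_k(N^k)$ tuples.

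For the $\bV$-factor, generalized Holder followed by the domination of a geometric mean by a maximum gives
\[\biggl|\EE_{\bV}\prod_{s=1}^{2k} B'_{a_s b_s}\biggr| \;\leq\; \EE_{\bV}\prod_{s=1}^{2k}|B'_{a_s b_s}| \;\leq\; \max_{i,j}\EE[|B'_{ij}|^{2k}].\]
For any $k_0 \geq k$, monotonicity of $L^p$-norms on probability spaces (a form of Jensen) gives $\EE[|B'_{ij}|^{2k}] \leq (\EE[|B'_{ij}|^{2k_0}])^{k/k_0}$, and hypothesis~\eqref{eq:thm:untf:assumption} applied at the moment $2k_0$ yields $\EE[|B'_{ij}|^{2k_0}] = o(N^{C - k_0})$. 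Combining with the $O_k(N^k)$ combinatorial count and the $1/N^2$ normalization,
\[\EE\!\left[\left(\tfrac{1}{N}\Tr\bigl((\bA'\bB')^k\bigr)\right)^2\right] \;=\; o\!\left(N^{kC/k_0 - 2}\right),\]
which tends to $0$ as soon as $k_0 > kC/2$. Theorem~\ref{thm:convergence-prob} then delivers the desired convergence in probability to $\MANOVA(\alpha, \beta)$.

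The main obstacle is the polynomial-exponent bookkeeping: the hypothesis carries an $N^C$ slack for a fixed but possibly large $C$, so the $2k$-th moment decay alone is not sharp enough, and one must amplify it by invoking a much higher moment $2k_0$ via Jensen to absorb both the $N^k$ walk count and the $N^C$ slack. A convenient feature is that, compared to a classical free-probability calculation, the indicator-diagonal structure of $\bA$ automatically kills most combinatorial terms, so no identification with non-crossing partitions is needed; the free-probability content is fully packaged inside Theorem~\ref{thm:convergence-prob}.
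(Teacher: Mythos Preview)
Your proof is correct and follows essentially the same approach as the paper's own argument: verify the hypotheses of Theorem~\ref{thm:convergence-prob} by expanding $\bigl(\Tr((\bA'\bB')^k)\bigr)^2$ as a sum over pairs of closed walks, kill all terms in which some index appears only once via the independent centered Bernoulli structure of $\bA'$, bound the remaining $O_k(N^k)$ terms uniformly by a high moment of $B'_{ij}$ via generalized H\"older, and use Jensen to trade the fixed slack $C$ for an arbitrarily small exponent. The only cosmetic difference is that the paper applies the Jensen amplification at the outset (reducing to $C=1$ before the combinatorial expansion), whereas you apply it at the end (passing from the $2k$-th moment to the $2k_0$-th moment after H\"older); the two are equivalent reorderings of the same idea.
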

\noindent
Proving Theorem~\ref{thm:untf} is a routine matter of verifying the main assumption of Theorem~\ref{thm:convergence-prob} with a combinatorial calculation.

To help interpret Theorem~\ref{thm:untf}, let us discuss some special cases when this result does and does not apply.
\begin{enumerate}
\item The result must \emph{not} apply to $\bV$ an i.i.d.\ matrix of Gaussian random variables distributed as $\sN(0, \frac{1}{M})$, since the limit of the e.s.d.\ of such a random matrix is a Mar\v{c}enko-Pastur law rather than a MANOVA law (see the discussion in \cite{HZG-2017-MANOVA}).
    While the main assumption \eqref{eq:thm:untf:assumption} holds in this case, the assumption of \emph{exact} tightness $\bV\bV^* = \bm I_M$ is violated.
    Thus, it is not enough for this assumption to hold only approximately or in expectation.
\item When the $\bV$ are a deterministic sequence, it suffices to have the incoherence condition
\begin{equation}
    \limsup_{\iota \to \infty} \sqrt{N}\, f(N) \cdot \max_{i, j \in [N]} \left| \langle \bv_i, \bv_j \rangle - \One\{i = j\} \beta \right| < \infty
\end{equation}
for some subpolynomial $f(N) = \exp(o(\log N))$.
In particular, the conjecture of \cite{HZG-2017-MANOVA, HGMZ-2021-AsymptoticFrameTheory} in the case of $\bV$ a sequence of deterministic ETFs follows immediately, as does the prior result of \cite{MMP-2019-RandomSubensembles} for ETFs in the special case $\beta = \frac{1}{2}$.
This is because, for an ETF, we may compute that all mutual angles for $i, j \in [N]$ distinct are
\begin{equation}
    |\langle \bv_i, \bv_j \rangle| = \sqrt{\frac{N - M}{M(N - 1)}} = \Theta(N^{-1/2}),
\end{equation}
by the saturation of the \emph{Welch bound} (see, e.g., Chapter 12 of \cite{Waldron-2018-FiniteTightFrames}).
\item As for $\bV$ random, the results of \cite{Farrell-2011-LimitingEmpirical} on $\bV$ formed by taking a random subset of columns of either the discrete Fourier transform matrix or a Haar-distributed unitary matrix follow readily, as does the result implicitly given by \cite{FN-2013-TruncationsTensorHaarUnitary} for $\bV$ a random subset of columns of a small tensor power of a Haar-distributed unitary matrix.
\end{enumerate}

\begin{remark}[Almost sure convergence]
    \label{rem:almost-sure}
    We do not work with almost sure convergence of random measures (see Definition~\ref{def:measure-conv}) for the sake of simplicity and since this does not seem relevant to applications.
    However, a version of Theorem~\ref{thm:untf} for almost sure convergence follows from the same proof we give and a variant of Lemma~\ref{lem:conv-prob-var}, so long as $N$ is restricted to a lacunary subsequence.
    A similar argument is given by \cite{MMP-2019-RandomSubensembles}.
\end{remark}

More generally, we believe Theorems~\ref{thm:convergence-moments} and \ref{thm:convergence-prob} should pave the way towards verifying that the conclusion of Theorem~\ref{thm:invariant-model} holds even in some completely deterministic settings.
For example, even the randomness in $\bA$ in Theorem~\ref{thm:untf} should not always be necessary for the MANOVA limit to hold.
In a follow-up work, we will explore this phenomenon and its consequences for the spectra of subgraphs of the Paley graph of number theory.

\subsection{Convergence of maximum eigenvalue in Kesten-McKay case}

We also give some results on the convergence of the largest eigenvalue of products of projections, $\lambda_{\max}(\bA\bB\bA)$.
As is usual in random matrix theory, such results are more technical to prove than weak convergence results, because when using trace calculations they require the analysis of $\Tr(\bA\bB\bA)^k$ where $k$ diverges sufficiently quickly with $N$ rather than being constant.
Classical results of this kind are those of~\cite{FK-1981-EigenvaluesRandomMatrices} treating Wigner random matrices and~\cite{Geman-1980-LimitNormRandomMatrix} treating Wishart random matrices.

While we believe that analogs of the results below should apply to arbitrary $\alpha, \beta \in (0, 1)$, we focus on the case where one of $\alpha$ or $\beta$ equals $\frac{1}{2}$.
In this case, the limiting $\MANOVA(\alpha, \beta)$ measure is (after suitable translation and rescaling) a member of the special subfamily of \emph{Kesten-McKay} measures, which simplifies the requisite combinatorial calculations.

\begin{remark}[Role of Kesten-McKay measure]
    The reader may be surprised to find the Kesten-McKay measure appearing in limit theorems for \emph{dense} random matrices, since it also appears as a limiting law for the spectra of adjacency matrices of \emph{sparse} random graphs (and trees of constant degree).
    One account of why this happens, using the language of free probability, is that the Kesten-McKay measures appear both as certain free \emph{additive} and free \emph{multiplicative} convolutions of Bernoulli measures.
    The former accounts for the relevance to sparse random matrices, and the latter for the relevance to dense random matrices.
    An exposition of both interpretations may be found in \cite{LM-2020-FreeProbabilityKestenMcKay}.
\end{remark}

\begin{theorem}
    \label{thm:edge-km}
    Let $N = N(\iota)$ be an increasing sequence.
    Let $\bA, \bB \in \CC^{N \times N}$ be random orthogonal projection matrices.
    Suppose that $\frac{1}{N} \Tr(\bA) \to \alpha \in (0, 1)$ and $\frac{1}{N}\Tr(\bB) \to \beta \in (0, 1)$ as $\iota \to \infty$.
    Suppose that one of $\alpha$ or $\beta$ equals $\frac{1}{2}$, that the e.s.d.\ of $\bA\bB\bA$ converges in probability to $\MANOVA(\alpha, \beta)$, and that there is also a sequence $k = k(\iota)$ such that $k \gg \log N$ and for which
    \begin{equation}
        \label{eq:max-eval-cond}
        \max_{1 \leq a \leq k} \left| \EE \Tr\left(\frac{\bA - \alpha \bm I_N}{\sqrt{\alpha(1 - \alpha)}} \frac{\bB - \beta \bm I_N}{\sqrt{\beta(1 - \beta)}}\right)^a \right| \leq \exp(o(k)).
    \end{equation}
    Then, $\lambda_{\max}(\bA\bB\bA) = \|\bA\bB\|^2$ converges in probability to $\edge(\alpha, \beta)$.
\end{theorem}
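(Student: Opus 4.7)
The proof plan is to combine a soft lower bound on $\lambda_{\max}$ with a matching moment-method upper bound. For the lower bound, the assumed convergence in probability of the e.s.d.\ of $\bA\bB\bA$ to $\MANOVA(\alpha,\beta)$ gives $\lambda_{\max}(\bA\bB\bA) \geq \edge(\alpha,\beta) - \epsilon$ in probability for every $\epsilon > 0$, because the MANOVA law places positive mass on every left neighborhood of its right edge (via the square-root edge density when $\alpha + \beta \leq 1$, or via the atom at $1$ when $\alpha + \beta \geq 1$).

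For the upper bound, I would apply Markov's inequality to a high power. Using the idempotence $\bA^2 = \bA$ together with cyclicity, one has $\Tr((\bA\bB\bA)^k) = \Tr((\bA\bB)^k)$, so for any $t > \edge(\alpha,\beta)$,
\[
\PP\big(\lambda_{\max}(\bA\bB\bA) > t\big) \leq t^{-k}\, \EE \Tr\big((\bA\bB)^k\big).
\]
Choosing $t = (1+\epsilon)\edge(\alpha,\beta)$, the goal reduces to proving $\EE\Tr((\bA\bB)^k) \leq \edge(\alpha,\beta)^k \exp(o(k))$ along the given sequence $k = k(\iota)$; this gives $\PP(\lambda_{\max} > t) \leq (1+\epsilon)^{-k}\exp(o(k)) \to 0$, since $k \to \infty$ by $k \gg \log N$.

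To obtain this moment bound I would decompose $\bA = \alpha \bm I_N + \sqrt{\alpha(1-\alpha)}\,\bA^\star$ and $\bB = \beta \bm I_N + \sqrt{\beta(1-\beta)}\,\bB^\star$ and invoke the explicit recursion for $\tfrac{1}{N}\EE\Tr((\bA\bB)^k) - m_k(\MANOVA(\alpha,\beta))$ whose existence is announced in the abstract. That recursion should express the difference as a weighted sum of expected traces of words in $\bA^\star$ and $\bB^\star$ of length at most $k$. The Kesten-McKay hypothesis, say $\alpha = \tfrac{1}{2}$, enters crucially here: the identity $(\bA^\star)^2 = \bm I_N$ collapses every non-alternating sub-word, so only the quantities $\eta_a \colonequals \EE\Tr((\bA^\star\bB^\star)^a)$ for $1 \leq a \leq k$ survive, each with a combinatorial prefactor admitting a manageable bound in $k$. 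Applying hypothesis~\eqref{eq:max-eval-cond} yields $|\eta_a| \leq \exp(o(k))$ uniformly in $a$, and together with $m_k \leq \edge(\alpha,\beta)^k$ (since MANOVA is supported in $[0,\edge]$) and $N = \exp(o(k))$ (from $k \gg \log N$), the bound $\EE\Tr((\bA\bB)^k) = Nm_k + O(\edge^k \exp(o(k))) \leq \edge^k \exp(o(k))$ follows.

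The main obstacle will be making the recursion quantitatively sharp enough: one must verify that the combinatorial coefficients attached to each $\eta_a$, after weighting by the scale $(\alpha(1-\alpha)\beta(1-\beta))^{a/2}$, sum to at most $\edge(\alpha,\beta)^k\exp(o(k))$. This reflects a delicate cancellation between the MANOVA moments and the non-crossing partition structure of the trace expansion, and the restriction that one of $\alpha,\beta$ equal $\tfrac{1}{2}$ (so that $(\bA^\star)^2 = \bm I_N$) is precisely what keeps the bookkeeping tractable, since otherwise words in $\bA^\star,\bB^\star$ cannot be collapsed to alternating form and the combinatorial complexity grows too fast in $k$.
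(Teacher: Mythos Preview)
Your overall architecture is exactly the paper's: the lower bound via the e.s.d.\ limit, Markov's inequality applied to $\EE\Tr(\bA\bB\bA)^k$, the decomposition $\bA=\alpha\bm I_N+\sqrt{\alpha(1-\alpha)}\,\bA^\star$ (and likewise for $\bB$), the error-term recursion, and the final target bound $\EE\Tr(\bA\bB\bA)^k\le\edge(\alpha,\beta)^k\exp(o(k))$. You also correctly flag the combinatorial coefficient estimate as the crux.

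There is, however, a conceptual slip in how you locate the role of the Kesten--McKay hypothesis. The reduction to only the alternating quantities $\eta_a=\EE\Tr((\bA^\star\bB^\star)^a)$ does \emph{not} require $(\bA^\star)^2=\bm I_N$; it already holds for arbitrary $\alpha,\beta$ and follows from the idempotence $\bA^2=\bA$, $\bB^2=\bB$ (this is exactly the content of the paper's error-term recursion, Lemma~\ref{lem:error-term-recursion}). So ``collapsing non-alternating sub-words'' is not where $\beta=\tfrac12$ is spent. The hypothesis is spent precisely on the step you call the main obstacle: for general $\beta$ the recursion coefficients are $\sum_{j}(-\alpha)^{k-j}q_{k,j,a}(\beta^{-1}-1)$, and there is no evident closed form or sharp bound. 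When $\beta=\tfrac12$ these evaluate at $1$ and collapse to entries of the (modified) Lucas triangle $T(\cdot,\cdot)$ (Proposition~\ref{prop:q-sum-km}); the resulting lower-triangular system can then be inverted explicitly via a Riordan-array identity (Proposition~\ref{prop:riordan-inv}), yielding the clean formula of Lemma~\ref{lem:tDelta-tdelta}:
\[
\widetilde{\Delta}_k=\sum_{\substack{0\le a\le k\\a\equiv k\ (2)}}\binom{k}{\tfrac{k+a}{2}}(\alpha(1-\alpha))^{(k-a)/2}\,\widetilde{\delta}_a.
\]
This is what turns the soft hypothesis $|\eta_a|\le\exp(o(k))$ into the sharp growth $|\widetilde{\Delta}_k|\le(2\sqrt{\alpha(1-\alpha)})^k\exp(o(k))$, hence $|\Delta_k|\le\edge(\tfrac12,\alpha)^k\exp(o(k))$. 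Without this explicit inversion, merely knowing that the recursion exists and that each $|\eta_a|$ is subexponential does not yield the edge-sharp bound: a naive unrolling of the recursion accumulates $k$ layers of coefficients and overshoots. So your plan is right, but the missing ingredient is not a bookkeeping device---it is the Lucas-triangle/Riordan-array identity that makes the coefficients summable to the correct base $\edge(\alpha,\beta)$.
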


The condition~\eqref{eq:max-eval-cond} is a natural ``tameness'' condition on $\bA$ and $\bB$: when $\EE\bA = \alpha \bm I_N$, then the normalization $\what{\bA} \colonequals (\bA - \alpha \bm I_N) / \sqrt{\alpha(1 - \alpha)}$ is ``orthogonal in expectation,'' in the sense that $\EE\what{\bA} = \bm 0$ and $\EE \what{\bA}^2 = \bm I_N$, and likewise for $\what{\bB} \colonequals (\bB - \beta \bm I_N) / \sqrt{\beta(1 - \beta)}$ when $\EE \bB = \beta \bm I_N$.
Indeed, if $\beta = \frac{1}{2}$, then $\what{\bB} = 2\bB - \bm I_N$ which is an \emph{exactly} orthogonal symmetric matrix, being a reflection through the hyperplane to which $\bB$ projects.
The condition~\eqref{eq:max-eval-cond} states that, even when $\alpha \neq \frac{1}{2}$ and $\what{\bA}$ is not actually orthogonal, traces of powers of $\what{\bA}\what{\bB}$ satisfy a qualitatively similar growth to those of an orthogonal matrix (which would be uniformly bounded by $N = \exp(o(k))$ when $k \gg \log N$).

As we will discuss later, it seems likely that Theorem~\ref{thm:edge-km} can be strengthened to apply without the ``Kesten-McKay condition'' that one of $\alpha$ or $\beta$ equals $\frac{1}{2}$.
The main obstacle to proving such a generalization is adequately understanding certain combinatorial polynomials appearing in the recursive description of moments of the e.s.d.\ of $\bA\bB\bA$; see Remark~\ref{rem:max-eval-manova} for further details.

\begin{conjecture}
    \label{conj:edge-manova}
    Theorem~\ref{thm:edge-km} holds for arbitrary $\alpha, \beta \in (0, 1)$.
\end{conjecture}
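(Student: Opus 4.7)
The plan is to follow the strategy of Theorem~\ref{thm:edge-km} but replace the Kesten--McKay-specific combinatorics with an argument that handles the full $\MANOVA(\alpha,\beta)$ moment structure. Since $\bA\bB\bA \succeq \zero$, we have $\lambda_{\max}(\bA\bB\bA)^k \leq \Tr((\bA\bB\bA)^k)$, so it suffices to establish a moment bound
\[
\EE\Tr((\bA\bB\bA)^k) \leq N \cdot \edge(\alpha,\beta)^k \cdot \exp(o(k))
\]
for some sequence $k = k(\iota) \gg \log N$, and then invoke Markov's inequality together with the matching lower bound on $\lambda_{\max}$ coming from the convergence in probability of the e.s.d.\ to $\MANOVA(\alpha,\beta)$.

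The first step is to extend the explicit moment recursion from the Kesten--McKay case. Writing $\what{\bA} \colonequals (\bA-\alpha\bm I_N)/\sqrt{\alpha(1-\alpha)}$ and $\what{\bB}$ analogously, the idempotence of $\bA$ and $\bB$ becomes the quadratic relations
\[
\what{\bA}^2 = \bm I_N + c_\alpha \what{\bA}, \qquad \what{\bB}^2 = \bm I_N + c_\beta \what{\bB}, \qquad c_\gamma \colonequals \frac{1-2\gamma}{\sqrt{\gamma(1-\gamma)}}.
\]
Expanding $(\bA\bB\bA)^k$ in $\what\bA,\what\bB$ and repeatedly applying these quadratic relations reduces every monomial to a strictly alternating word in $\what\bA$ and $\what\bB$. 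The ``constant'' terms collect into the $k$-th moment of $\MANOVA(\alpha,\beta)$, matching the free-probabilistic description in~\eqref{eq:manova-free-conv}; the remaining error is a weighted sum of expected traces of alternating words, each of which is controlled directly by the tameness hypothesis~\eqref{eq:max-eval-cond}.

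The main obstacle, exactly as flagged in the remark preceding Conjecture~\ref{conj:edge-manova}, is bounding the combinatorial coefficients produced by the reduction. When $\alpha = \frac{1}{2}$ or $\beta = \frac{1}{2}$, one of $c_\alpha, c_\beta$ vanishes and the reduction collapses into the recurrence behind Theorem~\ref{thm:edge-km}. For general $\alpha,\beta$, the reduction produces polynomials in $c_\alpha$ and $c_\beta$ whose structure is governed by pairs of non-crossing partitions and their Kreweras complements, mirroring the free multiplicative convolution $\Ber(\alpha) \boxtimes \Ber(\beta)$. The key estimate to be proved is that the total absolute mass of these polynomials grows as $\edge(\alpha,\beta)^k \exp(o(k))$ in $k$, matching the exponential rate one would naively predict from the right edge of $\MANOVA(\alpha,\beta)$.

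I would attack this estimate by organizing the expansion according to the lattice $NC(k)$ of non-crossing partitions and applying a saddle-point or transfer-matrix analysis to the resulting bivariate generating function in $(c_\alpha, c_\beta)$, whose specialization at $c_\alpha = c_\beta = 0$ recovers the known Kesten--McKay recursion and whose singular behaviour is dictated by the Stieltjes transform of $\MANOVA(\alpha,\beta)$, which has radius $1/\edge(\alpha,\beta)$. A plausible alternative is to produce the bound by duality, noting that the normalized trace of a power of $\what\bA\what\bB$ is at most the operator norm of a suitable conditioned product, then interpolating between the two edge cases $\alpha,\beta \in \{\frac{1}{2}\}$ and $\alpha+\beta \in \{1\}$ using convexity of $k \mapsto \log\edge(\alpha,\beta)$ in the parameters. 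Either way, once the combinatorial coefficient bound is in hand, it combines with~\eqref{eq:max-eval-cond} to close the argument in the same manner as Theorem~\ref{thm:edge-km}.
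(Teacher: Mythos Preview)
This statement is a \emph{conjecture}, not a theorem: the paper does not prove it and explicitly lists it as Open Problem~1. So there is no proof in the paper to compare your proposal against.

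Your proposal is not a proof either, and you essentially acknowledge this. You correctly identify the same obstacle that the paper flags in Remark~\ref{rem:max-eval-manova}: for general $\alpha,\beta$ one needs control on the coefficients appearing in the error-term recursion of Lemma~\ref{lem:error-term-recursion}, specifically the summations $\sum_{j=1}^{k-1}(-\alpha)^{k-j}q_{k,j,a}(\beta^{-1}-1)$, and one needs these with explicit constants uniform over $a$ and $k$ so that the resulting bound on $|\widetilde\Delta_k|$ has exponential rate matching $\edge(\alpha,\beta)$. In the Kesten--McKay case $\beta=\tfrac12$ this collapses (Proposition~\ref{prop:q-sum-km}) to modified Lucas numbers, and the inverse of the resulting triangular system is known in closed form (Proposition~\ref{prop:riordan-inv}). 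For $\beta\neq\tfrac12$ no such closed form is available, and the paper says as much.

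Your two suggested attacks---a saddle-point or transfer-matrix analysis of a bivariate generating function over $NC(k)$, or an interpolation/convexity argument between the $\beta=\tfrac12$ and $\alpha+\beta=1$ cases---are plausible heuristics but are not carried out. In particular, the interpolation idea seems unlikely to work as stated: the quantity you need to bound is not the operator norm of $\what\bA\what\bB$ itself but the expected trace of its powers, and there is no obvious convexity of $\log\EE\Tr(\what\bA\what\bB)^k$ in $(\alpha,\beta)$ to exploit. The generating-function route is closer to what the paper's Appendix~\ref{app:gf} machinery suggests, but you would still need to identify the analogue of the Riordan-array inversion for the two-parameter family, which is precisely the missing ingredient the paper highlights. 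As written, your proposal is a reasonable research plan, not a proof.
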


As an application, we use Theorem~\ref{thm:edge-km} to give a new proof of a limit theorem for the largest eigenvalue when $\bB$ is unitarily invariant, giving an edge limit theorem counterpart to the weak limit theorem claim in the $\beta = \frac{1}{2}$ case of Theorem~\ref{thm:invariant-model}.
\begin{theorem}
    \label{thm:edge-km-unitary}
    Let $\bU \in \sU(N)$ be a Haar-distributed unitary matrix.
    Let $\bA, \bD \in \{0, 1\}^{N \times N}$ be random diagonal matrices with $A_{ii} \sim \Ber(\alpha)$ and $D_{ii} \sim \Ber(\frac{1}{2})$ independently.
    Then, $\lambda_{\max}(\bA \bU \bD \bU^{*} \bA) = \| \bA \bU \bD \|^2$ converges in probability to $\edge(\alpha, \frac{1}{2})$ as $N \to \infty$.
\end{theorem}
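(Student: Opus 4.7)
The plan is to apply Theorem~\ref{thm:edge-km} to the pair of random orthogonal projections $\bA$ and $\bB \colonequals \bU\bD\bU^{*}$, with parameter values $\alpha$ and $\beta = \tfrac{1}{2}$. Three hypotheses must be verified. The trace conditions $\frac{1}{N}\Tr(\bA) \to \alpha$ and $\frac{1}{N}\Tr(\bU\bD\bU^{*}) = \frac{1}{N}\Tr(\bD) \to \tfrac{1}{2}$ are immediate from the law of large numbers. The required convergence in probability of the e.s.d.\ of $\bA\bU\bD\bU^{*}\bA$ to $\MANOVA(\alpha, \tfrac{1}{2})$ follows from Theorem~\ref{thm:convergence-prob}: the trace-variance hypotheses are standard binomial-variance bounds, while the asymptotic-freeness hypothesis \eqref{eq:asymp-free-relation} requires showing that $\frac{1}{N}\Tr\big((\bA - \alpha\bm I_N)(\bU\bD\bU^{*} - \tfrac{1}{2}\bm I_N)\big)^{k}$ has $L^{2}$-norm tending to zero for each fixed $k$. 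Conditioning on $\bA$ and $\bD$ and integrating $\bU$ out via the Weingarten formula reduces this to a routine variance estimate of order $O_{k}(1/N)$.

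The bulk of the work is verifying the tameness bound \eqref{eq:max-eval-cond}. Write $\what{\bA} = (\bA - \alpha\bm I_N)/\sqrt{\alpha(1-\alpha)}$ and $M = 2\bU\bD\bU^{*} - \bm I_N = \bU\bV\bU^{*}$, where $\bV = 2\bD - \bm I_N$ is diagonal with i.i.d.\ Rademacher entries; since $M^{2} = \bm I_N$, the matrix $M$ is a Hermitian unitary. Expanding $\Tr((\what{\bA}M)^{a})$ entrywise, using the independence of $\bA$, $\bD$, and $\bU$, and integrating over $\bU$ by the Weingarten formula yields an expression of the form
\[
\EE\Tr\big((\what{\bA}M)^{a}\big) = \sum_{\sigma, \tau \in S_{a}} \mathrm{Wg}(\sigma\tau^{-1}, N)\, \Phi_{\alpha}(c\sigma)\, \Psi(\tau),
\]
where $c = (1\;2\;\cdots\;a)$ is the cyclic shift, $\Phi_{\alpha}(\pi) = \prod_{O} \EE[\what{A}_{11}^{|O|}]$, and $\Psi(\pi) = \prod_{O} \EE[V_{11}^{|O|}]$, with products over the cycles $O$ of $\pi$. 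Since $\EE \what{A}_{11} = \EE V_{11} = 0$, only pairs $(\sigma, \tau)$ for which every cycle of $c\sigma$ has size at least $2$ and every cycle of $\tau$ has even size contribute. The critical observation is that the leading-order contribution as $N \to \infty$ matches the free-probability moment $\tau((\what{A}\what{B})^{a})$, which vanishes identically: the limiting elements $\what{A}$ and $\what{B}$ are centered and free, so by the alternating-centered characterization of freeness, $\tau(\what{A}\what{B}\what{A}\what{B}\cdots) = 0$. This cancellation of the leading order is essential because, when $\alpha \neq \tfrac{1}{2}$, one has $\|\what{\bA}\|_{\mathrm{op}} > 1$, and the naive bound $|\Tr((\what{\bA}M)^{a})| \leq N\|\what{\bA}\|_{\mathrm{op}}^{a}$ is exponential in $a$ and hence much too weak for \eqref{eq:max-eval-cond}.

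The main obstacle is converting this free-probability cancellation into a non-asymptotic bound uniform over $a \leq k$. Using non-asymptotic Weingarten estimates of the form $|\mathrm{Wg}(\pi, N)| \leq c(a)\, N^{-a-|\pi|}$, surviving pairs can be organized by the defect in the geodesic inequality $|c\sigma| + |\tau| + |\sigma\tau^{-1}| \geq a$; a careful combinatorial accounting---in the spirit of, and guided by, the explicit moment recursion underlying Theorem~\ref{thm:convergence-moments}---should then yield $|\EE\Tr((\what{\bA}M)^{a})| \leq \poly(a)$ for all $a$ up to some threshold $a_{\max}(N) \gg \log N$. Any such polynomial bound verifies \eqref{eq:max-eval-cond} for any $k$ between $\log N$ and $a_{\max}(N)$, whereupon Theorem~\ref{thm:edge-km} yields that $\lambda_{\max}(\bA\bU\bD\bU^{*}\bA) \to \edge(\alpha, \tfrac{1}{2})$ in probability; the equivalent formulation $\|\bA\bU\bD\|^{2} = \lambda_{\max}(\bA\bU\bD\bU^{*}\bA)$ is immediate from $(\bA\bU\bD)(\bA\bU\bD)^{*} = \bA\bU\bD\bU^{*}\bA$.
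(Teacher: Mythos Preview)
Your overall strategy matches the paper's: set $\bB = \bU\bD\bU^{*}$, verify the hypotheses of Theorem~\ref{thm:edge-km} (the paper invokes Theorem~\ref{thm:untf} rather than Theorem~\ref{thm:convergence-prob} for the e.s.d., but either route works), and check the tameness bound \eqref{eq:max-eval-cond} by expanding via the Weingarten calculus and applying the non-asymptotic estimates of Theorem~\ref{thm:weingarten-bound}. However, the proposal has a genuine gap precisely at the hardest step, and the displayed Weingarten expansion is not correct as written.

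First, your formula drops the sum over compatible index tuples. After the Weingarten expansion the correct expression (see the proof of Lemma~\ref{lem:invariant-max-eval-bd}) is
\[
\EE\Tr\big((\what{\bA}\,\bU\what{\bD}\bU^{*})^{a}\big)=\sum_{\sigma,\tau\in S_{a}}W(\rho\sigma\tau^{-1})\,f_{\alpha}(\sigma)\,f_{\beta}(\tau),
\]
where $\rho=(1\,2\cdots a)$ and $f_{\alpha}(\sigma)=\sum_{\bm i\text{ compatible with }\sigma}\EE[\what{A}_{i_{1}}\cdots\what{A}_{i_{a}}]$. Your $\Phi_{\alpha}(\pi)=\prod_{O}\EE[\what{A}_{11}^{|O|}]$ is only the leading-order \emph{per-cycle} factor; the actual quantity $f_{\alpha}(\sigma)$ carries powers of $N$ of order $N^{|\cyc_{1}(\sigma)|/2+|\cyc_{\ge 2}(\sigma)|}$ (Lemma~\ref{lem:f-sigma-bound}), and these must be balanced against the $N^{-a-|\cdot|}$ from the Weingarten bound. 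Without tracking these $N$-powers the accounting cannot be carried out.

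Second, the sentence ``a careful combinatorial accounting \dots should then yield $|\EE\Tr((\what{\bA}M)^{a})|\le\poly(a)$'' is exactly where the work lies, and you have not done it. The paper spends Lemmas~\ref{lem:f-sigma-bound} and~\ref{lem:cycle-sum} and the full proof of Lemma~\ref{lem:invariant-max-eval-bd} on this: one bounds $f_{\alpha}(\sigma)$ via partition counts, averages over the $(a-1)!$ choices of the $a$-cycle $\rho$ to exploit symmetry, controls the resulting sum by a transposition-distance argument, and then decouples the $\sigma$- and $\tau$-sums using $|\cyc(\sigma\tau^{-1})|\ge|\cyc(\sigma)|+|\cyc(\tau)|-a$. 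The outcome is $\lesssim\exp(4a^{3/4})$ uniformly for $a\le cN^{1/4}$, not $\poly(a)$; your claimed polynomial bound is stronger than what the paper establishes and would itself require proof. Finally, the paper does \emph{not} use the free-probability cancellation you emphasize: the argument proceeds by bounding absolute values term by term, so the observation that the leading order vanishes, while true, is not what makes the proof work.
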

\noindent
Note that $\bA\bU\bD$ is a random rectangular submatrix of $\bU$, where each row is included with probability $\alpha$ and each column with probability $\frac{1}{2}$.
Note also that this result fits the earlier framework when we set $\bB \colonequals \bU \bD \bU^{*} = (\bU\bD)(\bU\bD)^*$, an orthogonal projection matrix to the span of the columns of $\bU$ ``picked out'' by $\bD$.

Much more detailed versions of Theorem~\ref{thm:edge-km-unitary}, showing convergence of the entire distribution of a rescaling of $\lambda_{\max}(\bA \bU \bD \bU^{*} \bA)$, have been obtained before by \cite{Collins-2005-ProductRandomProjections, Johnstone-2008-JacobiMatrixLargestEigenvalue}.
However, we believe that our proof of Theorem~\ref{thm:edge-km-unitary} using Theorem~\ref{thm:edge-km} provides new ideas for analyzing the largest eigenvalues of submatrices of many interesting structured matrices beyond this preliminary result, such the conference matrices studied by \cite{MMP-2019-RandomSubensembles} (which have $\beta = \frac{1}{2}$ and so can be analyzed---in principle---using Theorem~\ref{thm:edge-km}) or the Fourier matrices studied by \cite{Farrell-2011-LimitingEmpirical}.

\subsection{Related work}
\label{sec:related}

\paragraph{Jacobi ensembles}
A closely related random matrix ensemble to a product of random projections, of more significance to MANOVA techniques of statistics, takes the form
\begin{equation}
    \label{eq:jacobi-mx}
    (\bG\bG^* + \bH\bH^*)^{-1/2} \bH\bH^* (\bG\bG^* + \bH\bH^*)^{-1/2}
\end{equation}
for $\bG$ and $\bH$ matrices with i.i.d.\ standard complex Gaussian entries of dimensions $N \times L$ and $N \times M$ respectively, with $L + M \geq N$.
This law (or that of the spectrum of such a matrix) is known in random matrix theory as the \emph{(unitary) Jacobi ensemble}, and its statistics may be understood in great detail thanks to the possibility of many closed-form Gaussian computations and connections with Jacobi polynomials \cite{CC-2004-AsymptoticFreenessGaussianWishart, Collins-2005-ProductRandomProjections, Demni-2008-FreeJacobiProcess, Johnstone-2008-JacobiMatrixLargestEigenvalue, DK-2008-ExtremeEigenvaluesJacobi, DZ-2009-JacobiProcessLargeDeviations, DHH-2012-FreeJacobiProcessSpectral, EF-2013-MANOVALocalDensity, CK-2014-LiberationProjections}.
See also the introduction of \cite{Jiang-2013-LimitTheoremsJacobiEnsembles} for a more thorough survey.

As observed by Wachter \cite{Wachter-1980-EmpiricalMeasureDiscriminantRatios}, and later again by \cite{Collins-2005-ProductRandomProjections,AEK-2006-PrincipalAngleRandomSubspaces,ES-2008-BetaJacobiMatrixModel}, the law of the non-zero spectrum of a product $\bA\bB\bA$ of unitarily invariant projections\footnote{We say this as a shorthand for \emph{either} $\bA$ or $\bB$ being unitarily invariant: if, e.g., $\bB$ is invariant having the same law as $\bU\bB\bU^*$ for $\bU$ a Haar-distributed unitary, then the spectrum of $\bA\bB\bA$ has the same law as that of $\bQ\bA\bU\bB\bU^*\bA\bQ^*$ for $\bQ$ another Haar-distributed unitary, which in turn has the same law as $\bQ\bA\bQ^*\bU\bB\bU^*\bQ\bA\bQ^*$ since $(\bQ, \bU)$ has the same law as $(\bQ, \bQ^*\bU)$.} is actually \emph{exactly equal} (for matrices of fixed size) to the law of the spectrum a Jacobi ensemble matrix.
Thus, in the special unitarily invariant case, the many powerful results on Jacobi ensembles also apply immediately to products of projections.

\paragraph{``Less random'' settings}
The prior work on the spectrum of a product of projections $\bA\bB\bA$ where $\bA$ and $\bB$ are ``less random'' or fully deterministic is surveyed by \cite{HGMZ-2021-AsymptoticFrameTheory}.
One important result, due to \cite{Farrell-2011-LimitingEmpirical}, established the MANOVA limit for random submatrices of the discrete Fourier transform matrix (i.e., $\bA$ a random coordinate projection and $\bB = \bU\bD\bU^*$ for $\bD$ a random coordinate projection and $\bU$ the Fourier matrix).
Note that $\bU\bD$, with zero columns removed, is a UNTF in this case.
Similar limits, though not quite belonging to our setting (as they have $\bU$ a Toeplitz matrix that is \emph{not} necessarily unitary, whereby $\bB$ is not necessarily a projection), are studied by \cite{TVCS-2007-GaussianErasureChannel,TCSV-2010-CapacityChannelsFadingWachter}.
Another similar result of \cite{FN-2013-TruncationsTensorHaarUnitary} established the MANOVA limit for random submatrices of a tensor power of a Haar-distributed unitary matrix.
\cite{RK-2016-MANOVAColumnSubsampled} established a general result similar to Theorem~\ref{thm:untf}, but under a condition of ``norm-concentration'' (their Definition 1) on $\bB$ that seems impractical to verify for deterministic $\bB$.

Inspired by some of this work, \cite{HZG-2017-MANOVA} conjectured a form of our Theorem~\ref{thm:untf}, substantiated by numerical experiments.
This was proved for the special case of ETFs when the MANOVA limit is a Kesten-McKay law by \cite{MMP-2019-RandomSubensembles}.
As we show in Appendix~\ref{app:asymp-lib}, a variation of that result also follows from earlier work in free probability of \cite{AF-2014-AsymptoticallyLiberatingUnitaryMatrices}.
Later,~\cite{HZ-2021-MomentsSubsetsETF} derived a combinatorial conjecture that would imply the result for general ETFs (corresponding to convergence of empirical moments to MANOVA moments), and verified the first few of these symbolically.

\paragraph{The largest eigenvalue}
To the best of our knowledge, no limit theorems are known for the largest eigenvalues of products of projections in settings where the weak limit of the e.s.d.\ is a MANOVA law, aside from the special case of unitarily invariant projections.
Other cases are raised as open questions by \cite{Farrell-2011-LimitingEmpirical,MMP-2019-RandomSubensembles, HGMZ-2021-AsymptoticFrameTheory}.
For the unitarily invariant case, \cite{Collins-2005-ProductRandomProjections, Johnstone-2008-JacobiMatrixLargestEigenvalue} established Tracy-Widom limit theorems for the fluctuations of the largest eigenvalue of such matrices, results that subsume and greatly extend our Theorem~\ref{thm:edge-km-unitary}.
Explicit non-asymptotic formulas for the distribution of these fluctuations can also be derived in this case \cite{AEK-2006-PrincipalAngleRandomSubspaces,DK-2008-ExtremeEigenvaluesJacobi}.

There seems to be some confusion about the state of this literature: \cite{HZG-2017-MANOVA} suggest that there is no known limit theorem on the largest eigenvalue of a product of unitarily invariant projections (the ``Haar frame'' case, in their terminology).
But in fact, using the distributional equivalence between products of projections and Jacobi ensembles mentioned above, such a limit theorem follows immediately from the results of \cite{Collins-2005-ProductRandomProjections,Johnstone-2008-JacobiMatrixLargestEigenvalue}.
We too only learned of this argument after arriving at our proof of Theorem~\ref{thm:edge-km-unitary}, and hope for this discussion to clarify the matter.

On the other hand, we are not aware of a proof of such a limit theorem using traces of powers in the style of the classical results of \cite{FK-1981-EigenvaluesRandomMatrices,Geman-1980-LimitNormRandomMatrix}.
This proof technique seems far more robust, not requiring any special ``integrability'' of the underlying model.

\subsection{Open problems}

We highlight a few problems that our work leaves open as future research directions.

\begin{enumerate}
\item Prove Conjecture~\ref{conj:edge-manova}. Together with Lemma~\ref{lem:invariant-max-eval-bd}, this would give a new combinatorial proof that the operator norm of a random $\alpha N \times \beta N$ submatrix of a Haar-distributed $N \times N$ unitary matrix converges in probability to $\sqrt{\edge(\alpha, \beta)}$ for any $\alpha, \beta \in (0, 1)$.
\item Establish the limit in probability of the maximum eigenvalue for random subframes of various UNTFs, such as Paley ETFs, random Fourier frames, or random columns of tensor powers of a Haar-distributed unitary matrix (for which weak convergence of the e.s.d.\ was studied by \cite{MMP-2019-RandomSubensembles,Farrell-2011-LimitingEmpirical, FN-2013-TruncationsTensorHaarUnitary}, respectively).
\item Investigate when limit theorems on the spectrum or the maximum eigenvalue hold in entirely deterministic versions of the settings studied here. We will investigate one instance of this question as pertains to subframes of Paley ETFs (or, equivalently, subgraphs of the Paley graph of number theory) in a follow-up work.
\item Establish limit theorems for the fluctuations of the maximum eigenvalue (as done by \cite{Collins-2005-ProductRandomProjections, Johnstone-2008-JacobiMatrixLargestEigenvalue} for Jacobi ensembles) for any ensemble discussed here that is not equivalent to a Jacobi ensemble.
\item Develop sufficient conditions (perhaps similar to Theorem~\ref{thm:edge-km}) for the minimum non-zero eigenvalue of any of the ensembles discussed here that is not equivalent to a Jacobi ensemble to converge in probability to $r_{-}(\alpha, \beta)$, the left edge of the absolutely continuous part of $\MANOVA(\alpha, \beta)$.
\end{enumerate}

\subsection{Notation}

We often consider an increasing sequence $N = N(\iota) \in \NN$ and sometimes $M = M(\iota) \in \NN$ of dimensions for our matrices or frames, where $\iota \in \NN$ is reserved as an index variable and usually left implicit.
In this setting, all asymptotic notations ($O(\cdot), o(\cdot), \Theta(\cdot), \ll, \gg, \lesssim, \gtrsim$) refer to the limit $\iota \to \infty$.
Subscripts of these notations specify the variables on which implicit constants depend.

$\bX^*$ denotes the conjugate transpose of a complex-valued matrix $\bX$.
$\sU(N)$ denotes the group of $N \times N$ unitary matrices.
We use the convention that matrix powers bind before the trace, so that, for example, $\Tr (\bX\bY)^a = \Tr((\bX\bY)^a)$.
$\lambda_{\max}(\cdot)$ denotes the largest eigenvalue of a matrix, and $\| \cdot \|$ the operator norm or largest singular value.

$\Ber(\alpha)$ denotes the law of a Bernoulli variable equal to 1 with probability $\alpha$ and to 0 with probability $1 - \alpha$.
$\EE[\cdot]$ denotes the expectation and $\Var[\cdot]$ the variance of a random variable.

We use the standard notation $[n] = \{1, \dots, n\}$.
For a set $S$, we write $\binom{S}{k}$ for the set of subsets of $S$ of size $k$.

$S_n$ denotes the symmetric group of permutations of $[n]$.
For $\sigma \in S_n$, we write $\cyc(\sigma)$ for the set of cycles of $\sigma$ (viewed as sets, not permutations), and $\cyc_a(\sigma)$, $\cyc_{\geq a}(\sigma)$, and $\cyc_{\leq a}(\sigma)$ for the sets of cycles of size exactly $a$, at least $a$, and at most $a$, respectively.

$\Part([n])$ denotes the set of partitions of the set $[n]$ (not the number $n$).
We view a partition $\rho \in \Part([n])$ as a set of sets, so that $|\rho|$ denotes the size of the partition.
For two partitions $\pi, \rho \in \Part([n])$, we write $\rho \geq \pi$ is $\rho$ is coarser than $\pi$ (i.e., if every part of $\pi$ is contained in a part of $\rho$).

\subsection{Organization}

The remainder of the paper is organized as follows.

In Section~\ref{sec:convergence-moments}, we prove and discuss Theorem~\ref{thm:convergence-moments} on conditions for convergence in moments to a MANOVA limit.
This uses tools from free probability, which we introduce in Section~\ref{sec:free-prob}.

In Section~\ref{sec:convergence-prob}, we prove Theorem~\ref{thm:convergence-prob} on conditions for convergence in probability to a MANOVA limit and, as an application, prove Theorem~\ref{thm:untf} on the spectrum of random subframes of incoherent UNTFs.

In Section~\ref{sec:moment-recursions}, preparing to study the maximum eigenvalue of $\bA\bB\bA$, we derive explicit recursions for the moments of the e.s.d.\ of such a matrix, for the moments of MANOVA distributions, and for the differences between the two.

In Section~\ref{sec:max-eval-km}, using these recursions, we prove Theorem~\ref{thm:edge-km} and, as an application, prove Theorem~\ref{thm:edge-km-unitary} on the largest eigenvalue of a product of unitarily invariant projections, or, equivalently, the operator norm of random rectangular submatrices of Haar-distributed unitary matrices.

Two of our appendices also contain elaborations of the main results that the reader may find especially interesting.

In Appendix~\ref{app:gf}, we present generating function calculations that both are used in our results in Section~\ref{sec:max-eval-km} and give an alternative proof of part of our results in Section~\ref{sec:convergence-moments} without relying explicitly on free probability reasoning. This kind of proof is more in line with the prior work of \cite{MMP-2019-RandomSubensembles, HZ-2021-MomentsSubsetsETF}, verifying ``by hand'' that the MANOVA moments satisfy a particular combinatorial recursion.

In Appendix~\ref{app:asymp-lib}, we show how the ideas of \cite{AF-2014-AsymptoticallyLiberatingUnitaryMatrices} on ``asymptotically liberating'' matrices in free probability may be used to produce a proof of a variant of Theorem~\ref{thm:untf} in the special case where one of $\alpha$ or $\beta$ equals $\frac{1}{2}$, so that $\MANOVA(\alpha, \beta)$ is a Kesten-McKay law.

\section{Convergence in moments to MANOVA}
\label{sec:convergence-moments}

\subsection{Background on free probability}
\label{sec:free-prob}

We very briefly recall some of the central ideas and definitions of free probability.
The reader may consult references such as \cite{VDN-1992-FreeRandomVariables,ENV-1998-SaintFlourNotes,TV-2004-RandomMatrixWirelessCommunications,NS-2006-LecturesCombinatoricsFreeProbability,Novak-2014-ThreeLectures,MS-2017-FreeProbabilityRandomMatrices} for much more information.
\paragraph{Generalities}
A \emph{noncommutative probability space} is a unital algebra $\sA$ over $\CC$ endowed with a linear functional $\phi: \sA \to \CC$ called a \emph{state} which satisfies, for $\id_{\sA}$ the identity element of $\sA$, the normalization $\phi(\id_{\sA}) = 1$.
If $\phi(\mathfrak{a}\mathfrak{b}) = \phi(\mathfrak{b}\mathfrak{a})$ for all $\mathfrak{a}, \mathfrak{b} \in \sA$, then $\phi$ and the space $(\sA, \phi)$ are called \emph{tracial}.
We will assume all noncommutative probability spaces under discussion are tracial.

\paragraph{Convergence}
We now present several notions concerning pairs of elements of a noncommutative probability space; these notions may all be generalized to greater numbers of elements and sometimes more broadly to subalgebras of $\sA$, but we will not need these more general formulations.
The \emph{law} of $\mathfrak{a} \in \sA$ is the state $\psi$ on the algebra $\CC[a]$ given by $\psi(f(a)) \colonequals \phi(f(\mathfrak{a}))$.
Similarly, the \emph{joint law} of $\mathfrak{a}, \mathfrak{b} \in \sA$ is the state $\psi$ on the algebra $\CC\langle a, b \rangle$ (the noncommutative polynomials in two indeterminates $a, b$) given by $\psi(f(a, b)) \colonequals \phi(f(\mathfrak{a}, \mathfrak{b}))$ for every noncommutative polynomial $f$.
The idea is that these laws remember the moments of $\mathfrak{a}$ or the joint moments of $\mathfrak{a}, \mathfrak{b}$ but discard all other information about $\sA$ (such as the dimensions of matrices).
That allows us to define a useful abstract notion of convergence.
Suppose that $(\sA^{(N)}, \phi^{(N)})$ for $N \in \NN$ are noncommutative probability spaces, and $\mathfrak{a}^{(N)}, \mathfrak{b}^{(N)} \in \sA^{(N)}$.
We say $\mathfrak{a}^{(N)}$ is \emph{convergent in law} to a state $\psi$ on $\CC[\mathfrak{a}]$ if the laws of these elements converge weakly to $\psi$, i.e., if $\lim_{N \to \infty} \phi^{(N)}(f(\mathfrak{a}^{(N)})) = \psi(f(\mathfrak{a}))$ for all $f$.
Similarly, we say the sequence of pairs $\mathfrak{a}^{(N)}, \mathfrak{b}^{(N)}$ is convergent in law to a state $\psi$ on $\CC\langle \mathfrak{a}, \mathfrak{b} \rangle$ if the joint laws converge weakly to $\psi$, i.e., if $\lim_{N \to \infty} \phi^{(N)}(f(\mathfrak{a}^{(N)}, \mathfrak{b}^{(N)})) = \psi(f(\mathfrak{a}, \mathfrak{b}))$ for all $f$.

\paragraph{Freeness}
Two elements $\mathfrak{a}, \mathfrak{b} \in \sA$ are called \emph{free} if, for any polynomials $p_1, q_1, \dots, p_k, q_k \in \CC[z]$ such that $\phi(p_i(\mathfrak{a})) = \phi(q_j(\mathfrak{b})) = 0$, we also have $\phi(p_1(\mathfrak{a})q_1(\mathfrak{b}) \cdots p_k(\mathfrak{a})q_k(\mathfrak{b})) = 0$.
Note that if $\mathfrak{a}, \mathfrak{b}$ are free, then their joint law is determined by their individual laws.
In particular, we may define $\CC\langle \mathfrak{a}, \mathfrak{b} \rangle$ as a noncommutative probability space by declaring that $\mathfrak{a}$ and $\mathfrak{b}$ are free, and specifying a law for $\mathfrak{a}$ and $\mathfrak{b}$ individually.
In the setting above with sequences $\mathfrak{a}^{(N)}, \mathfrak{b}^{(N)}$, we say that these pairs are \emph{asymptotically free} if they are convergent in law to a state $\psi$ on $\CC\langle \mathfrak{a}, \mathfrak{b} \rangle$ under which $\mathfrak{a}$ and $\mathfrak{b}$ are free.

\paragraph{Random matrix setting}
For our purposes, we will always consider an increasing sequence $N = N(\iota) \in \NN$, and will work over noncommutative probability spaces $\sA^{(\iota)}$ of random variables (suitable $\sigma$-algebras will be clear from context) taking values in $\CC^{N(\iota) \times N(\iota)}$, endowed with the ``normalized trace'' state $\phi^{(\iota)}(\bA) \colonequals \frac{1}{N(\iota)} \EE\Tr(\bA)$.

\paragraph{Free convolution}
For the purposes of random matrix theory, the benefit of the above definitions is that, when sequences of matrices are asymptotically free, then we may reason about their joint moments in terms of the limiting free elements $\mathfrak{a}, \mathfrak{b}$.
The following is one important manifestation of this.
Suppose that $\mathfrak{a}^{(N)}$ is convergent in law to a state $\psi_a$ on $\CC[a]$, and that there exists a probability measure $\mu$ on $\RR$ such that $\psi_a(f(a)) = \EE_{a \sim \mu}[f(a)]$ for each polynomial $f$.
Suppose the same holds for $\mathfrak{b}^{(N)}$ for a probability measure $\nu$ and a state $\psi_b$ on $\CC[b]$.
Suppose also that each $\mathfrak{a}^{(N)}$ has a square root, $\mathfrak{a}^{(N)} = (\sqrt{\mathfrak{a}^{(N)}})^2$.
If $\mathfrak{a}^{(N)}, \mathfrak{b}^{(N)}$ are asymptotically free, then $\sqrt{\mathfrak{a}^{(N)}}\, \mathfrak{b}^{(N)}\sqrt{\mathfrak{a}^{(N)}}$ is also convergent in law to a state $\psi$ on $\CC[z]$, and there exists a measure, denoted $\mu \boxtimes \nu$ and called the \emph{free multiplicative convolution} of $\mu$ and $\nu$, such that $\psi(f(z)) = \EE_{z \sim \mu \, \boxtimes \, \nu}[f(z)]$.
This measure may be determined by computations using the \emph{$S$-transform}; see Section 4.5.3 of \cite{MS-2017-FreeProbabilityRandomMatrices}.

\paragraph{Unitary invariance}
Finally, let us recall one of the foundational results of free probability, due to Voiculescu.
We say that the law of a random matrix $\bA \in \CC^{N \times N}$ is \emph{unitarily invariant} if $\bA$ has the same law as $\bU\bA\bU^*$ for any $\bU \in \sU(N)$.
The result states that unitary invariance is sufficient to give asymptotic freeness.

\begin{theorem}[\cite{Voiculescu-1991-RandomMatricesFreeProducts}]
    \label{thm:invariant-freeness}
    Let $N = N(\iota)$ be an increasing sequence, and $\bA^{(N)}, \bB^{(N)} \in \CC^{N \times N}$ be random matrices.
    Suppose that the sequences $\bA^{(N)}$ and $\bB^{(N)}$ are individually convergent in law (i.e., that the e.s.d.'s of the $\bA^{(N)}$ converge in moments, and likewise those of the $\bB^{(N)}$), and that every matrix in one of the sequences $\bA^{(N)}$ or $\bB^{(N)}$ is unitarily invariant.
    Then, the sequence of pairs $\bA^{(N)}, \bB^{(N)}$ is asymptotically free.
    They are convergent in law to a state on $\CC \langle \mathfrak{a}, \mathfrak{b} \rangle$ under which $\mathfrak{a}$ has the limiting law of $\bA^{(N)}$, $\mathfrak{b}$ has the limiting law of $\bB^{(N)}$, and $\mathfrak{a}$ and $\mathfrak{b}$ are free.
\end{theorem}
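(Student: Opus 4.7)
The plan is to prove the theorem by reducing it to a computation with the unitary Weingarten calculus. Without loss of generality, suppose it is the sequence $\bA^{(N)}$ that is unitarily invariant. By unitary invariance, and after enlarging the probability space to include a Haar-distributed $\bU \in \sU(N)$ independent of $(\bA^{(N)}, \bB^{(N)})$, we may replace $\bA^{(N)}$ by $\bU \bA^{(N)} \bU^{*}$ without changing the joint law of the pair. Establishing freeness of the limits $\mathfrak{a}$ and $\mathfrak{b}$ then reduces to showing that, for any noncommutative polynomials $p_1, q_1, \dots, p_k, q_k$ satisfying $\lim_{\iota} \phi^{(\iota)}(p_i(\bA^{(N)})) = \lim_{\iota} \phi^{(\iota)}(q_j(\bB^{(N)})) = 0$,
$$\lim_{\iota \to \infty} \frac{1}{N}\, \EE\, \Tr\bigl( \bU p_1(\bA^{(N)}) \bU^{*} q_1(\bB^{(N)}) \cdots \bU p_k(\bA^{(N)}) \bU^{*} q_k(\bB^{(N)}) \bigr) = 0.$$

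Next, I would expand the trace in matrix entries and integrate out $\bU$ conditionally on $\bA^{(N)}, \bB^{(N)}$ via the Weingarten formula. The resulting expression is a double sum over $(\sigma, \tau) \in S_k \times S_k$ weighted by $\operatorname{Wg}(N, \sigma\tau^{-1})$, in which $\sigma$ indexes a product of unnormalized traces of cyclic monomials in the $p_i(\bA^{(N)})$ and $\tau$, twisted by the long cycle $\gamma = (1\,2\,\cdots\,k)$ arising from the cyclic structure of the trace, indexes the analogous product in the $q_j(\bB^{(N)})$. Using the well-known asymptotic $\operatorname{Wg}(N, \pi) = O(N^{-k-|\pi|})$ with $|\pi| \colonequals k - |\cyc(\pi)|$, together with the uniform boundedness of normalized traces of polynomials in $\bA^{(N)}$ and $\bB^{(N)}$ (which is automatic from the individual convergence-in-law hypothesis), only pairs $(\sigma, \tau)$ saturating the geodesic inequality on the Cayley graph of $S_k$ contribute at leading order; every other pair is smaller by a factor of $O(N^{-2})$ and may be discarded.

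By the classical Kreweras-complement analysis of geodesic pairs, these surviving pairs are in bijection with non-crossing partitions of $[2k]$ respecting the alternating $\bA$/$\bB$ coloring, and the leading-order sum reassembles into precisely the non-crossing moment-cumulant expansion that defines the joint moment of two freely independent elements with the prescribed marginal laws. Centering each $p_i$ and $q_j$ then kills every term of this non-crossing expansion, since each partition contributes a product containing a factor $\phi(p_i(\mathfrak{a}))$ or $\phi(q_j(\mathfrak{b}))$ that vanishes in the limit. Uniqueness of the state on $\CC\langle \mathfrak{a}, \mathfrak{b}\rangle$ having the prescribed marginals and satisfying freeness then identifies the joint limit as claimed.

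The main obstacle is not any single hard step but the combinatorial matching of the leading-order Weingarten sum with the non-crossing moment-cumulant formula defining freeness; this identification requires nontrivial bookkeeping with the \Mobius\ function on the non-crossing partition lattice, and I would invoke it by reference to Chapter 23 of \cite{NS-2006-LecturesCombinatoricsFreeProbability} or the parallel treatment in \cite{MS-2017-FreeProbabilityRandomMatrices} rather than rederive it. A minor secondary obstacle is making rigorous the discarding of subleading terms: this requires bounding each non-geodesic summand by an absolute constant (coming from the bounded normalized traces) and counting how many $(\sigma, \tau)$ lie at each fixed geodesic defect, which is a routine but careful estimate on the symmetric group.
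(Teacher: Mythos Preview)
The paper does not prove this theorem; it is quoted from \cite{Voiculescu-1991-RandomMatricesFreeProducts} as a foundational black-box result and used without argument. So there is no in-paper proof to compare against.

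Your sketch is a correct outline of one standard modern proof, the Weingarten-calculus route developed by Collins and others and exposited in the references you cite. It is worth noting that this is \emph{not} Voiculescu's original 1991 argument, which predates the Weingarten asymptotics you invoke and instead proceeds via Gaussian random matrices and a limiting relation between Gaussian ensembles and Haar unitaries. The Weingarten approach is cleaner and more direct for this particular statement, and it is the one most textbooks now present.

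One small caution: your claim that uniform boundedness of normalized traces of polynomials in $\bA^{(N)}$ and $\bB^{(N)}$ is ``automatic from the individual convergence-in-law hypothesis'' is slightly optimistic. Convergence of $\frac{1}{N}\EE\Tr(p(\bA^{(N)}))$ gives boundedness of that sequence, but the Weingarten expansion produces products of several \emph{unnormalized} traces, and controlling those uniformly requires either a bounded-operator-norm assumption or some variance control on the individual traces. The paper's statement of the theorem is informal on this point, but a fully rigorous proof along your lines would need to make this hypothesis explicit (as the treatments in \cite{NS-2006-LecturesCombinatoricsFreeProbability} and \cite{MS-2017-FreeProbabilityRandomMatrices} do).
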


\subsection{Sufficient condition: Proof of Theorem~\ref{thm:convergence-moments}}

We will give a total of three treatments of Theorem~\ref{thm:convergence-moments}.
First, we will discuss how free probability pertains to orthogonal projection matrices and show how the Theorem follows as a consequence of those considerations.
We will also give a more direct though less conceptual derivation using only the consequence of this free probability machinery of Theorem~\ref{thm:invariant-model}.
Finally, in Appendix~\ref{app:gf}, we will also give an explicit combinatorial proof (when combined with the results of Section~\ref{sec:moment-recursions}) that is closer to the spirit of the approach to Theorem~\ref{thm:untf} pursued by \cite{MMP-2019-RandomSubensembles,HZ-2021-MomentsSubsetsETF}.

Let us discuss asymptotic freeness for orthogonal projection matrices in particular.
The following are the natural limiting objects of projection matrices.
\begin{definition}
    We say that $\mathfrak{a} \in \sA$ a noncommutative probability space with state $\phi$ is a \emph{Bernoulli element with parameter $\alpha$} if $\mathfrak{a}^2 = \mathfrak{a}$ and $\phi(\mathfrak{a}) = \alpha$.
    Note that these conditions determine the law of $\mathfrak{a}$.
\end{definition}
\noindent
We observe that, for projection matrices, the condition of asymptotic freeness is simpler than in the general case.
\begin{proposition}
    \label{prop:asymp-free-proj}
    Let $N = N(\iota)$ be an increasing sequence, and $\bA^{(N)}, \bB^{(N)} \in \CC^{N \times N}$ be random orthogonal projection matrices with $\frac{1}{N}\Tr(\bA^{(N)}) \to \alpha$ and $\frac{1}{N}\Tr(\bB^{(N)}) \to \beta$ for some $\alpha, \beta \in (0, 1)$ as $\iota \to \infty$.
    Suppose that, for each $k \geq 1$,
    \begin{equation}
        \label{eq:asymp-free-proj}
    \lim_{\iota \to \infty} \frac{1}{N} \EE \Tr\big((\bA^{(N)} - \alpha \bm I_N)(\bB^{(N)} - \beta \bm I_N)\big)^k = 0.
    \end{equation}
    Then, the sequence of pairs $\bA^{(N)}$, $\bB^{(N)}$ is asymptotically free.
    They are convergent in law to a state on $\CC \langle \mathfrak{a}, \mathfrak{b} \rangle$ under which $\mathfrak{a}, \mathfrak{b}$ are Bernoulli elements with parameters $\alpha, \beta$ respectively, and $\mathfrak{a}$ and $\mathfrak{b}$ are free.
\end{proposition}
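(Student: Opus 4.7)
The plan is to verify the definition of asymptotic freeness by exploiting the idempotence relations $\bA^2 = \bA$ and $\bB^2 = \bB$ to collapse both the joint moments of $\bA^{(N)}, \bB^{(N)}$ and the freeness condition itself to the single one-parameter family of traces appearing in the hypothesis. First I would reduce convergence in joint law to convergence of a single family of numerical moments: any noncommutative monomial in $\bA, \bB$ simplifies via idempotence to an alternating word, and by cyclic traciality of $\phi^{(N)} := \frac{1}{N}\EE\Tr$, any such word evaluates to one of $1$, $\alpha_N := \frac{1}{N}\EE\Tr \bA$, $\beta_N := \frac{1}{N}\EE\Tr \bB$, or $m_k := \frac{1}{N}\EE\Tr((\bA\bB)^k)$ for some $k \geq 1$. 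Hence the sequence of pairs is convergent in law as soon as each $m_k$ converges. To extract this from the hypothesis, set $\tilde m_k := \frac{1}{N}\EE\Tr((\bA - \alpha\bm I_N)(\bB - \beta\bm I_N))^k$, which tends to $0$ by assumption. Expanding $(\bA - \alpha\bm I_N)(\bB - \beta\bm I_N) = \bA\bB - \beta\bA - \alpha\bB + \alpha\beta\bm I_N$, raising to the $k$-th power, and reducing the resulting $4^k$ monomials by cyclic traciality and idempotence yields a triangular relation
\[
\tilde m_k \;=\; m_k \;+\; \sum_{j=0}^{k-1} c_{k,j}(\alpha, \beta)\, m_j \;+\; a_k(\alpha, \beta)\, \alpha_N \;+\; b_k(\alpha, \beta)\, \beta_N \;+\; d_k(\alpha, \beta),
\]
with polynomial coefficients in $\alpha, \beta$ and leading coefficient $1$ on $m_k$ (from the pure $(\bA\bB)^k$ summand). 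Since $\tilde m_k \to 0$, $\alpha_N \to \alpha$, $\beta_N \to \beta$, and by induction $m_j$ converges for $j < k$, we obtain that $m_k$ converges to some limit depending only on $\alpha, \beta$.

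Second I would identify the limiting joint state $\psi_\infty$ on $\CC\langle \mathfrak{a}, \mathfrak{b}\rangle$ with the free product of Bernoulli$(\alpha)$ and Bernoulli$(\beta)$. By construction $\psi_\infty$ satisfies $\psi_\infty(u(\mathfrak{a}^2 - \mathfrak{a})v) = \psi_\infty(u(\mathfrak{b}^2 - \mathfrak{b})v) = 0$ for all $u, v \in \CC\langle\mathfrak{a},\mathfrak{b}\rangle$, $\psi_\infty(\mathfrak{a}) = \alpha$, $\psi_\infty(\mathfrak{b}) = \beta$, and $\psi_\infty(((\mathfrak{a} - \alpha)(\mathfrak{b} - \beta))^k) = 0$ for all $k \geq 1$. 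The key observation is that the same idempotence reduction applied on the abstract side collapses the freeness condition to exactly this last family: every polynomial in $\mathfrak{a}$ reduces mod $\mathfrak{a}^2 = \mathfrak{a}$ to a linear polynomial in $\mathfrak{a}$, and if it is additionally $\psi$-centered then it must be a scalar multiple of $\mathfrak{a} - \alpha$, and similarly for $\mathfrak{b}$. By multilinearity, checking the defining condition of freeness reduces to checking precisely that $\psi(((\mathfrak{a} - \alpha)(\mathfrak{b} - \beta))^k) = 0$ for all $k \geq 1$. Hence $\psi_\infty$ satisfies the defining relations of the free product state of two Bernoulli elements, and must coincide with it.

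The main obstacle I anticipate is the bookkeeping in the triangular expansion of the first step, which is an honest but routine calculation; everything else is conceptual. The underlying point is that for idempotent elements both the specification of joint moments and the free-independence condition collapse to the same one-parameter family of traces indexed by $k$, so the single hypothesis \eqref{eq:asymp-free-proj} is simultaneously enough to prove convergence in law and to certify that the limit is free.
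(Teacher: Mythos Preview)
Your proposal is correct and follows essentially the same approach as the paper's proof: both exploit idempotence to reduce the joint law to the single family $\frac{1}{N}\EE\Tr(\bA\bB)^k$, use the hypothesis to obtain a triangular recursion determining these limits, and then match against the free Bernoulli state where the same relations $\phi(((\mathfrak{a}-\alpha)(\mathfrak{b}-\beta))^k)=0$ hold. The only presentational difference is that the paper compares the recursion for $m_k$ directly against the known free state, whereas you first build the limiting state $\psi_\infty$ and then verify freeness by collapsing the defining condition (via the observation that centered polynomials in an idempotent are scalar multiples of $\mathfrak{a}-\alpha$); these are two sides of the same coin.
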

\begin{proof}
    Because $\bA^{(N)}$ and $\bB^{(N)}$ are idempotent, their joint law is determined by traces of the form $\frac{1}{N}\Tr(\bA^{(N)}\bB^{(N)})^k$.
    The condition \eqref{eq:asymp-free-proj}, upon expanding the multiplications and applying idempotence, determines the limits $\lim_{N \to \infty}\frac{1}{N}\Tr(\bA^{(N)}\bB^{(N)})^k$ recursively as functions of $\alpha, \beta$.

    On the other hand, in $\sA \colonequals \CC\langle \mathfrak{a}, \mathfrak{b} \rangle$, by freeness we have, for each $k \geq 1$,
    \begin{equation}
        \phi\Big(\big((\mathfrak{a} - \alpha \, \id_{\sA})(\mathfrak{b} - \beta \, \id_{\sA})\big)^k\Big) = 0.
    \end{equation}
    In particular, since $\mathfrak{a}$ and $\mathfrak{b}$ are also idempotent, the same recursion applies to the $\phi((\mathfrak{a}\mathfrak{b})^k)$ as to the $\lim_{N \to \infty}\frac{1}{N}\Tr(\bA^{(N)}\bB^{(N)})^k$.
    Therefore, we must in fact have
    \begin{equation}
        \lim_{N \to \infty}\frac{1}{N}\Tr((\bA^{(N)}\bB^{(N)})^k) = \phi((\mathfrak{a}\mathfrak{b})^k).
    \end{equation}
    By idempotence and linearity of states this gives that $\bA^{(N)}, \bB^{(N)}$ converge in law to $\mathfrak{a}, \mathfrak{b}$.
\end{proof}

Finally, let us state formally the free convolution characterization of the MANOVA law, which we mentioned above.
We note that this is not a difficult result, requiring only some computations with suitable transforms of the measures involved.
\begin{proposition}[Example 3.6.7 of \cite{VDN-1992-FreeRandomVariables}, Exercise 18.27 of \cite{NS-2006-LecturesCombinatoricsFreeProbability}]
    \label{prop:manova-free-conv}
    $\MANOVA(\alpha, \beta) = \Ber(\alpha) \boxtimes \Ber(\beta)$.
\end{proposition}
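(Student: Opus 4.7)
The plan is to leverage Voiculescu's $S$-transform, which linearizes the free multiplicative convolution via $S_{\mu \boxtimes \nu}(z) = S_\mu(z)\, S_\nu(z)$; see Section~4.5.3 of \cite{MS-2017-FreeProbabilityRandomMatrices}. The strategy has three steps.

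\emph{Step 1: Compute the $S$-transform of $\Ber(\alpha)$.} Since $\Ber(\alpha)$ has $k$-th moment equal to $\alpha$ for every $k \geq 1$, the moment series is $\psi(z) = \sum_{k \geq 1} \alpha z^k = \alpha z/(1-z)$. Solving $\psi(y) = z$ algebraically gives $\psi^{\langle -1 \rangle}(z) = z/(z+\alpha)$, whence $S_{\Ber(\alpha)}(z) = \frac{1+z}{z}\,\psi^{\langle -1 \rangle}(z) = \frac{1+z}{z+\alpha}$. Multiplicativity of $S$ then yields
\begin{equation*}
    S_{\Ber(\alpha) \boxtimes \Ber(\beta)}(z) = \frac{(1+z)^2}{(z+\alpha)(z+\beta)}.
\end{equation*}

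\emph{Step 2: Invert back to the Cauchy transform.} From $S$ one recovers the inverse moment series $\psi^{\langle -1 \rangle}(z) = z(1+z)/((z+\alpha)(z+\beta))$ for the free product. Writing the Cauchy transform as $G(w) = (1+\psi(1/w))/w$ and setting $z = \psi(1/w)$, the relation $1/w = \psi^{\langle -1 \rangle}(z)$ rearranges to the quadratic
\begin{equation*}
    (1-w)z^2 + (\alpha + \beta - w)z + \alpha\beta = 0.
\end{equation*}
Selecting the branch enforcing $z \to 0$ as $w \to \infty$ (so that $G(w) \sim 1/w$) produces the closed form
\begin{equation*}
    G(w) = \frac{2 - w - \alpha - \beta - \sqrt{D(w)}}{2w(1-w)}, \qquad D(w) = w^2 - 2(\alpha + \beta - 2\alpha\beta)w + (\alpha - \beta)^2.
\end{equation*}

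\emph{Step 3: Recover the measure.} The roots of $D$ are exactly $r_\pm(\alpha, \beta)$ of Definition~\ref{def:manova}, so they are the branch points of $G$. On $(r_-, r_+)$ the discriminant is negative, and applying $-\frac{1}{\pi} \Im G(x + i 0^+)$ there reproduces the Wachter density $\frac{\sqrt{(r_+ - x)(x - r_-)}}{2\pi x(1 - x)}$. Taking residues of $G$ at $w = 0$ and $w = 1$ then yields the claimed point masses $1 - \min\{\alpha, \beta\}$ and $\max\{\alpha + \beta - 1, 0\}$, respectively.

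The principal subtlety is the branch of $\sqrt{D}$ on the component $\{w < r_-\}$: analytic continuation from $w \to +\infty$ around the cut $[r_-, r_+]$ flips its sign, so $\sqrt{D(0)} = -|\alpha - \beta|$ rather than $+|\alpha - \beta|$. This sign flip is exactly what converts the naive answer $1 - \max\{\alpha, \beta\}$ into the correct atom $1 - \min\{\alpha, \beta\}$ at $w = 0$; the residue at $w = 1$ (which lies outside the cut, since the AM-GM inequality $(1-\alpha)(1-\beta) + \alpha\beta \geq 2\sqrt{\alpha(1-\alpha)\beta(1-\beta)}$ forces $r_+ \leq 1$) then follows without further ambiguity. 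All remaining steps are routine algebraic simplification.
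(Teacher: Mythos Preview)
Your proof is correct and follows precisely the $S$-transform route the paper alludes to (and defers to the references \cite{VDN-1992-FreeRandomVariables,NS-2006-LecturesCombinatoricsFreeProbability,MS-2017-FreeProbabilityRandomMatrices}) without itself carrying out the details. Your handling of the branch of $\sqrt{D}$ near $w=0$ is the right way to resolve the only genuinely delicate point, and the residue computations at $0$ and $1$ check out against Definition~\ref{def:manova}.
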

\noindent
Note that with the tools we have introduced, we may prove Theorem~\ref{thm:invariant-model}: it is merely a combination of Theorem~\ref{thm:invariant-freeness}, which shows asymptotic freeness of the matrices involved, our discussion of free multiplicative convolution, which gives that the measure that the e.s.d.\ of $(\bU\bA\bU^*)^{1/2} \bB (\bU\bA\bU^*)^{1/2} = \bU\bA\bU^* \bB \bU\bA\bU^*$ converges to in moments is $\Ber(\alpha) \boxtimes \Ber(\beta)$, and Proposition~\ref{prop:manova-free-conv}, which identifies this measure as $\MANOVA(\alpha, \beta)$.

We are now prepared to give the most principled proof of our claim.
\begin{proof}[Proof 1 of Theorem~\ref{thm:convergence-moments}]
    By Proposition~\ref{prop:asymp-free-proj}, under the assumptions of the Theorem, the sequence of $\bA, \bB$ is asymptotically free, converging to free Bernoulli elements $\mathfrak{a}, \mathfrak{b}$ with parameters $\alpha, \beta$, respectively.
    In particular, $\bA$ converges in law to $\mathfrak{a}$, whose moments coincide with those of the measure $\Ber(\alpha)$, and likewise for $\bB, \mathfrak{b}$, and the measure $\Ber(\beta)$.
    From our discussion of free multiplicative convolution, we therefore have that $\bA^{1/2}\bB\bA^{1/2} = \bA \bB\bA$ converges in law to $\sqrt{\mathfrak{a}}\, \mathfrak{b}\sqrt{\mathfrak{a}} = \mathfrak{a}\mathfrak{b}\mathfrak{a}$, whose moments coincide with those of the measure $\Ber(\alpha) \boxtimes \Ber(\beta)$, which equals $\MANOVA(\alpha, \beta)$ by Proposition~\ref{prop:manova-free-conv}.
    The result then follows from observing that convergence in law of a sequence of random matrices is identical to convergence in moments of the e.s.d.\ of those matrices.
\end{proof}

It is also possible to rewrite this proof in a more ``down-to-earth'' fashion, avoiding the mention of limiting free Bernoulli elements $\mathfrak{a}, \mathfrak{b}$, which may be simpler to parse for the reader unfamiliar with free probability.
We emphasize, however, that all we are doing below is working more directly with the definition and basic properties of free multiplicative convolution.

\begin{proof}[Proof 2 of Theorem~\ref{thm:convergence-moments}]
    We first note that the assumption on joint moments of $\bA$ and $\bB$,
    \begin{equation}
        \lim_{\iota \to \infty} \frac{1}{N} \EE \Tr \big((\bA - \alpha \bm I_N)(\bB - \beta \bm I_N)\big)^k = 0, \label{eq:convergence-moments-condition-2}
    \end{equation}
    determines the limiting moments of the e.s.d.\ of $\bA\bB\bA$.
Indeed, expanding the polynomial in the trace of the asymptotic freeness relation and using the idempotence of $\bA$ as needed, we find
\begin{equation}
    \Tr \big((\bA - \alpha \bm I_N)(\bB - \beta \bm I_N)\big)^k = v_{k, 0} \Tr(\bm I_N) + v_{k, 1}\Tr(\bA) + v_{k, 2} \Tr(\bB) + \sum_{\ell = 1}^k w_{k, \ell} \Tr(\bA\bB\bA)^{\ell}, \label{eq:error-moment-general}
\end{equation}
and the coefficients of this relation are functions only of $\alpha$ and $\beta$: $v_{k, j} = v_{k, j}(\alpha, \beta)$ and $w_{k, \ell} = w_{k, \ell}(\alpha, \beta)$.
These in particular also satisfy that $w_{k, k} = 1$.
Thus, by induction we find that each limit $\lim_{\iota \to \infty} \frac{1}{N} \EE\Tr (\bA\bB\bA)^k$ exists for each $k \geq 1$, and is a function of $\alpha$ and $\beta$ determined by the above relations.

On the other hand, for $\bA$ and $\bB$ unitarily invariant as in the setting of Theorem~\ref{thm:invariant-model} (with the same $\alpha$ and $\beta$), these relations hold as well, as a consequence of asymptotic freeness (which follows from Theorem~\ref{thm:invariant-freeness}).
We know that in that case the e.s.d.\ of $\bA\bB\bA$ converges in moments to $\MANOVA(\alpha, \beta)$ by Theorem~\ref{thm:invariant-model}, and therefore $\bA\bB\bA$ for any $\bA, \bB$ satisfying \eqref{eq:convergence-moments-condition-2} must also converge in moments to $\MANOVA(\alpha, \beta)$.
\end{proof}

\section{Convergence in probability to MANOVA}
\label{sec:convergence-prob}

\subsection{Sufficient condition: Proof of Theorem~\ref{thm:convergence-prob}}
\label{sec:moment-background}

Our proof will use the following well-known condition allowing convergence in moments to be extended to convergence in probability.
\begin{lemma}[Exercise 2.4.6 of \cite{Tao-2012-RandomMatrixTheory}, Section 2.1.2 of \cite{AGZ-2010-RandomMatrices}]
    \label{lem:conv-prob-var}
    Let $N = N(\iota)$ be an increasing sequence.
    Let $\bA \in \CC^{N \times N}$ be a sequence of random Hermitian matrices with $\|\bA\|$ uniformly bounded with probability 1 and whose e.s.d.'s converge in moments to some bounded measure $\mu$ on $\RR$.
    If moreover for each $k \geq 1$
    \begin{equation}
        \lim_{\iota \to \infty} \Var\left[\frac{1}{N} \Tr(\bA^k)\right] = 0,
    \end{equation}
    then the same convergence holds in probability.
\end{lemma}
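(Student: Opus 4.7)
The plan is to upgrade convergence in moments of the e.s.d.\ of $\bA$ to convergence in probability against arbitrary continuous compactly supported test functions, by first handling polynomial test functions via the variance hypothesis and Chebyshev, and then transferring to general continuous compactly supported $f$ via Weierstrass approximation, using the uniform norm bound on $\bA$ to ensure all relevant measures are supported on a common compact interval.

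First I would fix $k \geq 1$ and observe that $\int x^k\, d\mu_\iota(x) = \frac{1}{N} \Tr(\bA^k)$, where $\mu_\iota$ denotes the e.s.d.\ of $\bA$. The convergence in moments hypothesis says $\EE[\frac{1}{N}\Tr(\bA^k)] \to \int x^k\, d\mu(x)$, while the variance hypothesis gives $\Var[\frac{1}{N}\Tr(\bA^k)] \to 0$. Chebyshev's inequality then yields $\frac{1}{N}\Tr(\bA^k) \to \int x^k\, d\mu(x)$ in probability, and by linearity the same convergence $\int p\, d\mu_\iota \to \int p\, d\mu$ holds in probability for every fixed polynomial $p$.

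Next I would argue that the supports of both $\mu_\iota$ and $\mu$ are uniformly bounded. Let $R > 0$ be a deterministic constant with $\|\bA\| \leq R$ almost surely for all $\iota$; then $\mu_\iota$ is supported on $[-R, R]$ a.s. For $\mu$, the even moments satisfy $\int x^{2k}\, d\mu(x) = \lim_\iota \EE[\frac{1}{N}\Tr(\bA^{2k})] \leq R^{2k}$, and if $\mu$ assigned positive mass to $\{|x| > R + \delta\}$ for some $\delta > 0$ then the $2k$-th moment would be bounded below by $(R + \delta)^{2k}\mu(\{|x| > R + \delta\})$, growing strictly faster than $R^{2k}$ as $k \to \infty$, a contradiction. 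So $\mu$ is also supported on $[-R, R]$.

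Finally, given a continuous compactly supported $f : \RR \to \RR$ and $\varepsilon > 0$, Weierstrass approximation produces a polynomial $p$ with $\sup_{x \in [-R, R]} |f(x) - p(x)| < \varepsilon$. Since both $\mu_\iota$ (a.s.) and $\mu$ are supported on $[-R, R]$, the deterministic errors $|\int (f - p)\, d\mu_\iota|$ and $|\int (f - p)\, d\mu|$ are each at most $\varepsilon$, so combining the triangle inequality with the convergence in probability $\int p\, d\mu_\iota \to \int p\, d\mu$ established in the first step yields $\int f\, d\mu_\iota \to \int f\, d\mu$ in probability, as required. The argument is essentially routine; the only mildly delicate point is verifying that $\mu$ has compact support, which is what lets the Weierstrass approximation have a deterministic uniform error over the entire support of both $\mu_\iota$ and $\mu$ simultaneously.
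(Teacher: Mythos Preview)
The paper does not supply its own proof of this lemma; it is stated with references to Exercise~2.4.6 of Tao's book and Section~2.1.2 of Anderson--Guionnet--Zeitouni, and then used as a black box. Your argument is correct and is exactly the standard textbook proof one finds in those references: Chebyshev to get convergence in probability for each moment, linearity to pass to polynomials, compact support of $\mu$ from the moment bound, and Weierstrass approximation on the common compact interval to reach general $f \in C_c(\RR)$.
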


\begin{proof}[Proof of Theorem~\ref{thm:convergence-prob}]
    First, note that the assumptions subsume those of Theorem~\ref{thm:convergence-moments}, so we have by that result that the e.s.d.'s of $\bA\bB\bA$ converge to $\MANOVA(\alpha, \beta)$ in moments.
    We will show later in Lemma~\ref{lem:error-term-recursion} that, for every $k \geq 1$, there is a constant $m_k = m_k(\alpha, \beta)$ such that $\Tr(\bA\bB\bA)^k - m_k N$ is a linear combination of $\Tr(\bA) - \alpha N$, $\Tr(\bB - \beta N)$, and $\Tr((\bA - \alpha \bm I_N)(\bB - \beta \bm I_N))^{\ell}$ for $1 \leq \ell \leq k$ with coefficients depending only on $\alpha, \beta,$ and $k$.
    Indeed, here $m_k$ will just be the $k$th moment of $\MANOVA(\alpha, \beta)$.
    (The result of Lemma~\ref{lem:error-term-recursion} is just a more specific and inverted version of the relation \eqref{eq:error-moment-general} mentioned above.)
    We may then bound coarsely
    \begin{align*}
      \Var\left[\frac{1}{N} \Tr((\bA\bB\bA)^k)\right]
      &= \Var\left[\frac{1}{N} \Tr((\bA\bB\bA)^k) - m_k\right] \\
      &\lesssim_{k} \Var\left[\frac{1}{N}(\Tr(\bA) - \alpha N)\right] + \Var\left[\frac{1}{N}(\Tr(\bB) - \beta N)\right] \\
      &\hspace{1cm} + \sum_{\ell = 1}^k\Var\left[\frac{1}{N}\Tr((\bA - \alpha \bm I_N)(\bB - \beta \bm I_N))^{\ell}\right] \\
      &\leq \EE\left(\frac{1}{N}(\Tr(\bA) - \alpha N)\right)^2 + \EE\left(\frac{1}{N}(\Tr(\bB) - \beta N)\right)^2 \\
      &\hspace{1cm} + \sum_{\ell = 1}^k\EE\left(\frac{1}{N}\Tr((\bA - \alpha \bm I_N)(\bB - \beta \bm I_N))^{\ell}\right)^2, \numberthis
    \end{align*}
    and the result follows using Lemma~\ref{lem:conv-prob-var}, since our assumptions imply that each term tends to zero as $\iota \to \infty$.
\end{proof}

We note that \cite{MMP-2019-RandomSubensembles} proved a result for the setting of Theorem~\ref{thm:untf} for ETFs with $\beta = \frac{1}{2}$ allowing convergence in moments to be extended to convergence in probability using Talagrand's concentration inequality for Lipschitz functions of independent bounded random variables (the independent variables in Theorem~\ref{thm:untf} being the diagonal entries of $\bA$).
Also, \cite{HGMZ-2021-AsymptoticFrameTheory} extended this to apply to general UNTFs in the same setting.
However, instead of relying on these tools below we will use our Theorem~\ref{thm:convergence-prob}, to illustrate that it is not difficult to use and to keep our treatment self-contained (in particular, avoiding relying on the powerful machinery of Talagrand's inequality).

On the other hand, those results worked with almost sure convergence, while ours only guarantee convergence in probability.
But, using a suitable variation of Lemma~\ref{lem:conv-prob-var} (see Exercise 2.4.6 of \cite{Tao-2012-RandomMatrixTheory}) and allowing for a subsequence of $\iota$ to be taken, it is straightforward to extend Theorem~\ref{thm:untf} to almost sure convergence.

\subsection{Application to UNTFs: Proof of Theorem~\ref{thm:untf}}
\label{sec:pf:thm:untf}

\begin{proof}[Proof of Theorem~\ref{thm:untf}]
    We establish that the condition of Theorem~\ref{thm:convergence-prob} holds.
    Let us write $\widetilde{\bA} \colonequals \bA - \alpha \bm I_N$ and $\widetilde{\bB} \colonequals \bB - \beta \bm I_N$.
    Recall that $\bA$ and thus also $\widetilde{\bA}$ are both diagonal.
    Recall also our assumption on the moments of $\bB$, which with the above notation may be written as, for some $C > 0$ and all $k \geq 1$,
    \begin{equation}
        \max_{i, j \in [N]} \EE|\widetilde{B}_{ij}|^k = o_k(N^{-k/2 + C}).
    \end{equation}
    By applying Jensen's inequality $\EE|\widetilde{B}_{ij}|^k \leq (\EE|\widetilde{B}_{ij}|^{tk})^{1/t} = o_k(N^{-k/2 + C/t})$ using a sufficiently large $t \geq 1$ depending on $C$, we may assume without loss of generality that $C = 1$.

    We expand
    \begin{align*}
      &\hspace{-1cm}\EE \left(\Tr\big((\bA - \alpha \bm I_N)(\bB - \beta \bm I_N)\big)^k\right)^2 \\
      &= \EE \left( \Tr(\widetilde{\bA}\widetilde{\bB})^k\right)^2 \\
      &= \EE \sum_{\substack{i_1, \dots, i_k \in [N]^k \\ j_1, \dots, j_k \in [N]^k}} \widetilde{A}_{i_1i_1} \cdots \widetilde{A}_{i_ki_k} \widetilde{A}_{j_1j_1} \cdots \widetilde{A}_{j_kj_k} \widetilde{B}_{i_1i_2} \cdots \widetilde{B}_{i_ki_1} \widetilde{B}_{j_1j_2} \cdots \widetilde{B}_{j_kj_1} \\
      &\leq \sum_{\substack{i_1, \dots, i_k \in [N]^k \\ j_1, \dots, j_k \in [N]^k}} \big|\EE\widetilde{A}_{i_1i_1} \cdots \widetilde{A}_{i_ki_k} \widetilde{A}_{j_1j_1} \cdots \widetilde{A}_{j_kj_k} \big| \cdot \EE |\widetilde{B}_{i_1i_2} \cdots \widetilde{B}_{i_ki_1} \widetilde{B}_{j_1j_2} \cdots \widetilde{B}_{j_kj_1}|
      \intertext{By \Holder's inequality on the $\bB$ expectation,}
      &\leq \sum_{\substack{i_1, \dots, i_k \in [N]^k \\ j_1, \dots, j_k \in [N]^k}} \big|\EE\widetilde{A}_{i_1i_1} \cdots \widetilde{A}_{i_ki_k} \widetilde{A}_{j_1j_1} \cdots \widetilde{A}_{j_kj_k} \big| \cdot \\
      &\hspace{2.45cm} (\EE |\widetilde{B}_{i_1i_2}|^{2k})^{1/2k} \cdots (\EE |\widetilde{B}_{i_ki_1}|^{2k})^{1/2k} (\EE |\widetilde{B}_{j_1j_2}|^{2k})^{1/2k} \cdots (\EE |\widetilde{B}_{j_kj_1}|^{2k})^{1/2k}
      \intertext{Here, any term where each index appearing among the $i_1, \dots, i_k, j_1, \dots, j_k$ does not appear at least twice will equal zero, since the $\widetilde{A}_{ii}$ are independent and centered. Moreover, the magnitude of every non-zero term is $o_k(N^{-2k/2 + 1}) = o_k(N^{-k + 1})$ by our assumption on the moments of entries of $\bB$ and since $|\widetilde{A}_{ii}| = |A_{ii} - \alpha| \leq 1$ with probability 1. Therefore,}
      &\ll_k N^{-k + 1} \#\{(i_1, \dots, i_{2k}) \in [N]^{2k}: \text{ each index occurs at least twice}\} \\
      &\leq N^{-k + 1} \sum_{\substack{\pi \in \Part([2k]) \\ |S| \geq 2 \text{ for all } S \in \pi}} N^{|\pi|} \\
      &\leq N|\Part([2k])| \\
      &\lesssim_k N. \numberthis
    \end{align*}
    Thus, Theorem~\ref{thm:convergence-prob} applies and gives the result.
\end{proof}

\section{Explicit moment recursions}
\label{sec:moment-recursions}

We now develop explicit combinatorial expressions for the recursions we have used abstractly (e.g., in the second proof of Theorem~\ref{thm:convergence-moments}) for the traces $\Tr(\bA\bB\bA)^k$.
These will be crucial in our analysis of the right edge of the spectrum of $\bA\bB\bA$, for which we will need to understand the magnitude of the coefficients in these recursions.

\subsection{Idempotent necklace expansion}

The main tool we apply is a deterministic combinatorial recursion for traces of powers of \emph{any} product of projection matrices.
We first define some preliminary combinatorial objects.
\begin{definition}
    \label{def:set-partition}
    Suppose $S \subseteq [k]$ is non-empty with $|S| = j$ and $S = \{s_1 < \cdots < s_j\}$.
    We write $\bm p = (p_1, \dots, p_j) = (p_1(S), \dots, p_j(S))$ for an ordered partition of $k$ induced by $S$, given by $p_1 = s_2 - s_1, \dots, p_{j - 1} = s_j - s_{j - 1}, p_j = k + s_1 - s_j$.
    Note that each $p_i > 0$.
\end{definition}
\noindent
It is best to think of this partitioning as happening on the $k$-cycle: we may think for example of $S$ as a subset of edges of the cycle that are deleted, and the partition $\bm p$ as the numbers of vertices in each resulting path.
(Note that since $S$ is non-empty, the result is indeed always a union of paths and never the entire cycle.)

We will see that the following, superficially rather convoluted, definition will come up naturally in expanding the traces of powers.
In Appendix~\ref{app:gf} we show that these polynomials are surprisingly well-behaved, in particular having a convenient rational ordinary generating function.
\begin{definition}
    \label{def:q-poly}
    For $0 < j \leq k$ and $0 \leq a \leq j$, define the polynomials
    \begin{equation}
        q_{k, j, a}(x) \colonequals \sum_{S \in \binom{[k]}{j}} \sum_{A \in \binom{[j]}{a}} \prod_{i = 1}^j \left(\sum_{b = 0}^{p_i(S) - 1}(-1)^{p_i(S) - 1 - b} x^{b} + (-1)^{p_i(S)}\One\{i \notin A\}\right).
    \end{equation}
    For $j = a = 0$, we define $q_{k, 0, 0}(x) \colonequals 0$.
\end{definition}

\begin{lemma}
    \label{lem:necklace-expansion}
    Let $\bA, \bB \in \CC^{N \times N}$ be orthogonal projection matrices, and $\alpha, \beta \in \RR$.
    Then, we have
    \begin{equation}
        \Tr(\bA\bB\bA)^k = \beta^k\sum_{\ell = 0}^k \binom{k}{\ell}\widetilde{\mu}_{\ell},
    \end{equation}
    where the $\widetilde{\mu}_{k}$ satisfy the recursion
    \begin{align*}
      \widetilde{\mu}_0 &= \Tr(\bA), \numberthis \\
      \widetilde{\mu}_k &= \alpha^k \left(\sum_{b = 1}^{k - 1}(1 - \beta^{-1})^b\right) N - \sum_{a = 0}^{k - 1} \widetilde{\mu}_{a} \sum_{j = 1}^{k - 1} (-\alpha)^{k - j} q_{k, j, a}(\beta^{-1} - 1) \\
                        &\hspace{2cm} + (-1)^{k + 1} \frac{\alpha^k}{\beta}\left(\sum_{b = 0}^{k - 1}(1 - \beta^{-1})^b\right)\left(\frac{1}{N}\Tr(\bB) - \beta \right)N \\
      &\hspace{2cm} + \frac{1}{\beta^k}\Tr\big((\bA - \alpha \bm I_N)(\bB - \beta \bm I_N)\big)^k \text{ for } k \geq 1. \numberthis
    \end{align*}
\end{lemma}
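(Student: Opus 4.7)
The plan is to prove this lemma by a two-stage necklace-style expansion, using only cyclic invariance of the trace together with the idempotence relations $\bA^2 = \bA$ and $\bB^2 = \bB$; both parts of the statement will fall out of the same underlying combinatorics.

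For the first identity, I would first reduce $\Tr(\bA\bB\bA)^k = \Tr((\bA\bB)^k)$ by cyclicity and $\bA^2 = \bA$, then write $\bA\bB = \beta\bA + \bA\widetilde{\bB}$ with $\widetilde{\bB} \colonequals \bB - \beta\bm I_N$ and expand $(\bA\bB)^k$ as a sum over subsets $S \subseteq [k]$, the subset $S$ indicating the positions at which the $\bA\widetilde{\bB}$ factor is selected. Collapsing the maximal runs of $\bA$'s via $\bA^2 = \bA$, each term with $|S| = \ell$ contributes $\beta^{k-\ell}\Tr((\bA\widetilde{\bB}\bA)^\ell)$ for $\ell \geq 1$ and $\beta^k \Tr(\bA)$ for $\ell = 0$, depending only on $|S|$. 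Summing over $\binom{k}{\ell}$ subsets yields the first identity, with $\widetilde{\mu}_0 \colonequals \Tr(\bA)$ and $\widetilde{\mu}_\ell \colonequals \beta^{-\ell}\Tr((\bA\widetilde{\bB}\bA)^\ell)$ for $\ell \geq 1$.

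For the recursion I would perform a dual expansion of $\beta^k \widetilde{\mu}_k = \Tr((\bA\widetilde{\bB})^k)$: write $\bA\widetilde{\bB} = \alpha \widetilde{\bB} + \widetilde{\bA}\widetilde{\bB}$ with $\widetilde{\bA} \colonequals \bA - \alpha \bm I_N$, and expand over subsets $T \subseteq [k]$. For $T$ with $|T| = j$ and ordered parts $p_1, \ldots, p_j$ from Definition~\ref{def:set-partition}, the contribution is $\alpha^{k-j} \Tr(\widetilde{\bA}\widetilde{\bB}^{p_1} \cdots \widetilde{\bA}\widetilde{\bB}^{p_j})$. The boundary case $j = k$ reproduces the error term $\Tr((\widetilde{\bA}\widetilde{\bB})^k)$, while $j = 0$ gives $\alpha^k \Tr(\widetilde{\bB}^k)$; the latter I evaluate using that $\widetilde{\bB}$ has eigenvalues only $-\beta$ and $1-\beta$ (from $\bB^2 = \bB$), yielding $\Tr(\widetilde{\bB}^k) = (-\beta)^k (N - \Tr(\bB)) + (1-\beta)^k \Tr(\bB)$. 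Splitting this into an $N$-part and a $(\Tr(\bB) - \beta N)$-part, and recognizing the geometric sums $\sum_{b=0}^{k-1}(1-\beta^{-1})^b = \beta(1 - (1-\beta^{-1})^k)$, reproduces exactly the two boundary contributions in the recursion. For the middle values $1 \leq j \leq k-1$, I would further expand each $\widetilde{\bB}^{p_i} = (-\beta)^{p_i} \bm I_N + c_i \bB$ with $c_i \colonequals (1-\beta)^{p_i} - (-\beta)^{p_i}$, summing over subsets $A \subseteq [j]$. The resulting cyclic products of $\widetilde{\bA}$'s and $\widetilde{\bA}\bB$'s must be reduced further using $\widetilde{\bA}^e = (-\alpha)^e \bm I_N + ((1-\alpha)^e - (-\alpha)^e)\bA$ together with $\bA^2 = \bA$ and $\bB^2 = \bB$, ultimately turning each trace into a combination of $\Tr(\bA)$, $\Tr(\bB)$, $N$, and $\Tr((\bA\bB)^b)$ for $b \leq |A|$; the last is re-expanded via the first identity into $\widetilde{\mu}_\ell$'s. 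The key algebraic identity that extracts $q_{k,j,a}(\beta^{-1}-1)$ is the geometric sum $\sum_{b=0}^{p-1}(-1)^{p-1-b} x^b = (x^p - (-1)^p)/(x+1)$, which at $x = \beta^{-1} - 1$ rewrites $c_i = \beta^{p_i - 1}\sum_{b=0}^{p_i-1} (-1)^{p_i-1-b}(\beta^{-1}-1)^b$.

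The main obstacle is the triple-nested expansion in the middle-$j$ step: the cyclic matrix products one encounters depend a priori on the fine interleaving of $\bA$ and $\bB$ factors, not just on $(j, a)$, and substantial cancellation driven by the two idempotence relations is required to collapse them into the shape prescribed by $q_{k,j,a}$. Carefully tracking signs and powers of $\beta$ throughout---and in particular matching the residual $(-1)^{p_i} \One\{i \notin A\}$ summand in Definition~\ref{def:q-poly} to the $(-\beta)^{p_i} \bm I_N$ pieces contributed by positions $i \notin A$ once they are fully expanded---is the most delicate part, and this bookkeeping is essentially what motivates the seemingly ad hoc form of the polynomial $q_{k,j,a}$.
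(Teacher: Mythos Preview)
Your derivation of the first identity is correct and essentially identical to the paper's: write $\bB = \beta\bm I_N + \widetilde{\bB}$, expand, and collapse powers of $\bA$.

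For the recursion, however, your route diverges from the paper's in a way that leaves a real gap. You expand $\bA = \alpha\bm I_N + \widetilde{\bA}$, obtaining terms $\alpha^{k-j}\Tr(\widetilde{\bA}\widetilde{\bB}^{p_1}\cdots\widetilde{\bA}\widetilde{\bB}^{p_j})$, and then propose to reduce the $\widetilde{\bB}^{p_i}$'s in terms of $\bB$, followed by reducing the resulting powers $\widetilde{\bA}^{e}$ in terms of $\bA$, and finally re-expanding the emerging $\Tr((\bA\bB)^r)$'s back through the first identity. This triple cascade does not land on the $\widetilde{\mu}_a$'s with the advertised coefficients in any transparent way. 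Concretely, your claim that the $j=0$ term alone ``reproduces exactly the two boundary contributions'' is false: for the $N$-coefficient, the middle-$j$ terms also contribute (through $A=\emptyset$, which yields $\Tr(\widetilde{\bA}^j)$, and through the $\Tr(\bB)$ pieces arising when every $\widetilde{\bA}^{e_m}$ contributes $\bm I_N$), and a direct check at $k=2$ shows your $j=0$ term gives the wrong $N$-coefficient. So the boundary bookkeeping is already off, and the ``substantial cancellation'' you flag for the intermediate $j$'s is not merely delicate---you have not identified any mechanism that produces the specific form of $q_{k,j,a}$ from your quadruple-nested expansion.

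The paper sidesteps all of this with one idea you are missing: a M\"obius inversion that trades the $\widetilde{\bA}$'s for $\bA$'s \emph{before} reducing powers of $\widetilde{\bB}$. Writing $f(S)$ for the summand with $\widetilde{\bA}$ exactly outside $S$ and $g(T) = \sum_{S \supseteq T} f(S)$ (which has $\bA$ exactly outside $T$), inclusion--exclusion gives
\[
\widetilde{\mu}_k \;=\; f(\emptyset) \;+\; \sum_{\emptyset \neq S \subseteq [k]} (-1)^{|S|+1} g(S).
\]
Now each $g(S)$ with $1 \le |S| < k$ is $\alpha^{|S|}\Tr(\bA(\beta^{-1}\widetilde{\bB})^{p_1}\cdots\bA(\beta^{-1}\widetilde{\bB})^{p_j})$, and reducing $(\beta^{-1}\widetilde{\bB})^{p} = c_p\,\beta^{-1}\widetilde{\bB} + d_p\,\bm I_N$ (from the quadratic $\bB^2=\bB$) together with $\bA^2=\bA$ lands \emph{directly} on $\widetilde{\mu}_r$, with no further re-expansion. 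The polynomial $q_{k,j,a}$ is then literally the sum $\sum_{S,\,A}\prod_i(c_{p_i}\text{ or }d_{p_i})$ that this produces; the extra $(-1)^{p_i}\One\{i\notin A\}$ in its definition is just $d_p - c_p = (-1)^p$. The boundary terms come solely from $g([k])$, so their coefficients are read off immediately. The inclusion--exclusion is the step that makes the combinatorics close; without it, your approach has to recover the same answer through a much more intricate identity that you have not established.
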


\noindent
The details of the coefficients aside, there is a simple algebraic reason for a formula like this to hold (essentially the same as we alluded to in the second proof of Theorem~\ref{thm:convergence-moments}): consider rewriting $\Tr(\bA\bB\bA)^k = \Tr(\bA\bB)^k = \Tr((\alpha\bm I_N + \widetilde{\bA})(\beta \bm I_N + \widetilde{\bB}))^k$ for $\widetilde{\bA} \colonequals \bA - \alpha \bm I_N$ and $\widetilde{\bB} \colonequals \bB - \beta\bm I_N$, and then expanding the product of sums into a sum of products.
The result will be a sum of terms of the form $\alpha^s\beta^t\Tr(\widetilde{\bA}^{i_1} \widetilde{\bB}^{j_1} \cdots \widetilde{\bA}^{i_k}\widetilde{\bB}^{j_k})$ for some $i_a, j_b \in \{0, 1\}$, $s = k - \sum_{a = 1}^k i_a$, and $t = k - \sum_{b = 1}^k j_b$.
The quadratic equation $\bA^2 = \bA$ may be rewritten into another quadratic equation for $\widetilde{\bA}$, and similarly for $\bB$ and $\widetilde{\bB}$.
Thus, we may repeatedly rewrite each such term until we reach one of four kinds of traces: $\Tr(\bm I_N) = N$, $\Tr(\widetilde{\bA}) = \Tr(\bA) - \alpha N$, $\Tr(\widetilde{\bB}) = \Tr(\bB) - \beta N$, and $\Tr(\widetilde{\bA}\widetilde{\bB})^{\ell}$ for some $\ell \leq k$.
The Lemma moreover describes the specific coefficients with which each of these ``terminal'' kinds of traces appears.

\begin{proof}[Proof of Lemma~\ref{lem:necklace-expansion}]
    Define $\widetilde{\bA} \colonequals \bA - \alpha \bm I_N$ and $\widetilde{\bB} \colonequals \bB - \beta \bm I_N$.
    We begin by expressing the moments we want to calculate in terms of similar moments with $\bB$ replaced by $\widetilde{\bB}$.
    We have
    \begin{align*}
      \Tr(\bA\bB\bA)^k
      &= \Tr(\bA (\beta \bm I_N + \widetilde{\bB}) \bA)^k
        \intertext{and, expanding by the binomial theorem and using that $\bA$ is idempotent,}
      &= \sum_{\ell = 0}^k \binom{k}{\ell}\beta^{k - \ell} \Tr((\bA\widetilde{\bB})^{\ell}\bA) \\
      &= \beta^k \Tr(\bA) + \beta^k\sum_{\ell = 1}^k \binom{k}{\ell} \Tr(\bA\beta^{-1}\widetilde{\bB})^{\ell}. \numberthis
    \end{align*}
    Thus it suffices to show that
    \begin{align}
      \widetilde{\mu}_0 &\colonequals \Tr(\bA), \\
      \widetilde{\mu}_{k} &\colonequals \Tr(\bA\beta^{-1}\widetilde{\bB})^{k} = \Tr((\alpha \bm I_N + \widetilde{\bA})\beta^{-1}\widetilde{\bB})^{k} \text{ for } k \geq 1
    \end{align}
    satisfy the recursion in the statement.
    As indicated earlier, our strategy will be to expand the product remaining in $\widetilde{\mu}_k$ above.

    In doing so, we will end up with powers of $\beta^{-1}\widetilde{\bB}$.
    This matrix satisfies a quadratic equation: since
    \begin{equation}
        \beta \bm I_N + \widetilde{\bB} = \bB = \bB^2 = (\beta \bm I_N + \widetilde{\bB})^2,
    \end{equation}
    we may expand and reorganize into
    \begin{equation}
        (\beta^{-1}\widetilde{\bB})^2 = (\underbrace{\beta^{-1} - 2}_{= \, \eta - 1}) \beta^{-1}\widetilde{\bB} + (\underbrace{\beta^{-1} - 1}_{\equalscolon \, \eta}) \bm I_N.
    \end{equation}
    More generally, by repeatedly applying this identity, we will have $(\beta^{-1}\widetilde{\bB})^k = c_k\widetilde{\bB} + d_k \bm I_N$, and these coefficients will satisfy the recursion
    \begin{align}
      c_0 &= 0, \\
      d_0 &= 1, \\
      c_k &= (\eta - 1)c_{k - 1} + d_{k - 1} \text{ for } k \geq 1, \\
      d_k &= \eta d_{k - 1} \text{ for } k \geq 1.
    \end{align}
    By induction, it is straightforward to show that these coefficients for $k \geq 1$ have the closed forms
    \begin{align}
      c_k &= \sum_{j = 0}^{k - 1}(-1)^{k - 1 - j} \eta^{j}, \\
      d_k &= \sum_{j = 1}^{k - 1}(-1)^{k - 1 - j} \eta^{j} = c_k + (-1)^{k}.
    \end{align}

    Conducting the expansion of $\widetilde{\mu}_k$ naively, we may write
    \begin{align*}
      \widetilde{\mu}_k
      &= \Tr(((\alpha \bm I_N + \widetilde{\bA})\beta^{-1}\widetilde{\bB})^{k}) \\
      &= \sum_{S \subseteq [k]} \alpha^{|S|} \Tr(\widetilde{\bA}^{\One\{1 \notin S\}}\beta^{-1}\widetilde{\bB} \cdots \widetilde{\bA}^{\One\{k \notin S\}} \beta^{-1}\widetilde{\bB}) \\
      &\equalscolon \sum_{S \subseteq [k]} f(S). \numberthis
    \end{align*}
    By the same token, the partial sums of $f(S)$ are given by
    \begin{equation}
        g(T) \colonequals \sum_{S \supseteq T} f(S) = \alpha^{|T|} \Tr(\bA^{\One\{1 \notin T\}}\beta^{-1}\widetilde{\bB} \cdots \bA^{\One\{k \notin T\}} \beta^{-1}\widetilde{\bB})
    \end{equation}
    upon substituting $\alpha \bm I_N + \widetilde{\bA}$ and expanding.
    The quantity we are interested in in this notation is $\widetilde{\mu}_k = g(\emptyset)$.
    By inclusion-exclusion (or, what is the same, \Mobius\ inversion in the poset of subsets of $[k]$), we have
    \begin{equation}
        f(T) = \sum_{S \supseteq T} (-1)^{|S| - |T|} g(S).
    \end{equation}
    We then have
    \begin{align*}
      \widetilde{\mu}_k
      &= g(\emptyset) \\
      &= \sum_{S \subseteq [k]} f(S) \\
      &= f(\emptyset) + \sum_{\substack{T \subseteq [k] \\ T \neq \emptyset}} \sum_{S \supseteq T} (-1)^{|T| - |S|} g(S)\\
      &= f(\emptyset) + \sum_{\substack{S \subseteq [k] \\ S \neq \emptyset}} g(S) \sum_{\substack{T \subseteq S \\ T \neq \emptyset}} (-1)^{|T| - |S|}
      \intertext{and by another inclusion-exclusion calculation}
      &= f(\emptyset) + \sum_{\substack{S \subseteq [k] \\ S \neq \emptyset}} (-1)^{|S| + 1} g(S) \\
      &= f(\emptyset) + (-1)^{k + 1} g([k]) + \sum_{\substack{S \subseteq [k] \\ 1 \leq |S| < k}} (-1)^{|S| + 1} g(S) \\
      &= \beta^{-k}\Tr((\widetilde{\bA} \widetilde{\bB})^k) + (-1)^{k + 1} \alpha^k \Tr((\beta^{-1} \widetilde{\bB})^k) \\
      &\hspace{2.525cm} + \sum_{\substack{S \subseteq [k] \\ 1 \leq |S| < k}} (-1)^{k - |S| + 1} \alpha^{k - |S|} \Tr(\bA^{\One\{1 \in S\}}\beta^{-1}\widetilde{\bB} \cdots \bA^{\One\{k \in S\}} \beta^{-1}\widetilde{\bB})
      \intertext{and finally, using that $(\beta^{-1}\widetilde{\bB})^k = c_k\beta^{-1}\widetilde{\bB} + d_k \bm I_N$,}
      &= (-1)^{k + 1} \alpha^k d_k N - \sum_{\substack{S \subseteq [k] \\ 1 \leq |S| < k}} (-\alpha)^{k - |S|} \Tr(\bA^{\One\{1 \in S\}}\beta^{-1}\widetilde{\bB} \cdots \bA^{\One\{k \in S\}} \beta^{-1}\widetilde{\bB}) \\
      &\hspace{2.525cm}+ (-1)^{k + 1} \frac{\alpha^k}{\beta}c_k\left(\frac{1}{N}\Tr(\bB) - \beta \right)N + \beta^{-k}\Tr((\widetilde{\bA} \widetilde{\bB})^k) \numberthis
\end{align*}

Our goal now is to show how the remaining traces above are linear combinations of the $\widetilde{\mu}_{\ell}$ with $\ell < k$, thus finding a recursion satisfied by the $\widetilde{\mu}_k$.
To this end, suppose we have a nonempty $S \subseteq [k]$ with $|S| = j$ and $S = \{s_1 < \cdots < s_j\}$.
Per Definition~\ref{def:set-partition}, we view $S$ as inducing an ordered partition of $k$ into $j$ parts, $p_1 + \cdots + p_j = k$, where $p_1 = s_2 - s_1, \dots, p_{j - 1} = s_j - s_{j - 1}, p_j = k + s_1 - s_j$.

Using this terminology, our expansion of powers of $\beta^{-1}\widetilde{\bB}$, and the idempotence of $\bA$, we may expand the traces remaining above as
\begin{align*}
  &\hspace{-1cm}\Tr(\bA^{\One\{1 \in S\}}\beta^{-1}\widetilde{\bB} \cdots \bA^{\One\{k \in S\}} \beta^{-1}\widetilde{\bB}) \\
  &= \Tr(\bA(\beta^{-1}\widetilde{\bB})^{p_1(S)} \cdots \bA (\beta^{-1}\widetilde{\bB})^{p_{|S|}(S)}) \\
  &= \Tr(\bA(c_{p_1(S)} \beta^{-1}\widetilde{\bB} + d_{p_1(S)} \bm I_N) \cdots \bA(c_{p_{|S|}(S)} \beta^{-1}\widetilde{\bB} + d_{p_{|S|}(S)} \bm I_N)) \\
  &= \sum_{R \subseteq [|S|]} \prod_{i \in R} c_{p_i(S)} \prod_{i \in [|S|] \setminus R} d_{p_i(S)} \cdot \Tr((\bA \beta^{-1}\widetilde{\bB})^{|R|}\bA) \\
  &= \Tr(\bA)\prod_{i \in [|S|]} d_{p_i(S)} + \sum_{r = 1}^{|S|} \widetilde{\mu}_r \sum_{R \in \binom{[|S|]}{r}}\prod_{i \in R} c_{p_i(S)} \prod_{i \in [|S|] \setminus R} d_{p_i(S)}. \numberthis
\end{align*}

The main remaining observation is that by the definition of the $q_{k, j, a}$ in Definition~\ref{def:q-poly}, we have
\begin{equation}
    \sum_{S \in \binom{[k]}{j}} \sum_{R \in \binom{[|S|]}{r}} \prod_{i \in R}c_{p_i(S)} \prod_{i \in [|S|] \setminus R} d_{p_i(S)} = q_{k, j, r}(\eta).
\end{equation}
Substituting into the previous step and using this, we find
\begin{align*}
  \widetilde{\mu}_k
  &= (-1)^{k + 1} \alpha^k d_k N \\
  &\hspace{0.5cm}- \sum_{\substack{S \subseteq [k] \\ 1 \leq |S| < k}} (-\alpha)^{k - |S|} \left(\Tr(\bA)\prod_{i \in [|S|]} d_{p_i(S)} + \sum_{r = 1}^{|S|} \widetilde{\mu}_r \sum_{R \in \binom{[|S|]}{r}}\prod_{i \in R} c_{p_i(S)} \prod_{i \in [|S|] \setminus R} d_{p_i(S)}\right) \\
  &\hspace{0.5cm}+ (-1)^{k + 1} \frac{\alpha^k}{\beta}c_k\left(\frac{1}{N}\Tr(\bB) - \beta \right)N + \Tr((\widetilde{\bA} \widetilde{\bB})^k) \\
  &= (-1)^{k + 1} \alpha^k d_k N - \Tr(\bA)\sum_{j = 1}^{k - 1} (-\alpha)^{k - j} q_{k, j, 0}(\eta) - \sum_{a = 1}^{k - 1} \widetilde{\mu}_a \sum_{j = 1}^{k - 1} (-\alpha)^{k - j} q_{k, j, a}(\eta) \\
  &\hspace{0.5cm}+ (-1)^{k + 1} \frac{\alpha^k}{\beta}c_k\left(\frac{1}{N}\Tr(\bB) - \beta \right)N + \beta^{-k}\Tr((\widetilde{\bA} \widetilde{\bB})^k), \numberthis
\end{align*}
and substituting for $\eta$ and the remaining occurrences of $c_k$ and $d_k$ in terms of $\beta$ gives the result as stated.
\end{proof}

\subsection{MANOVA moment recursion}

Not surprisingly, the ``main terms'' of the recursion of Lemma~\ref{lem:necklace-expansion} describe a recursion satisfied by the moments of $\MANOVA(\alpha, \beta)$.
We state this formally below.

\begin{lemma}
    \label{lem:manova-recursion}
    Write $m_k \colonequals \EE_{X \sim \MANOVA(\alpha, \beta)}[X^k]$.
    Then, for $k \geq 1$, we have
    \begin{equation}
        m_k = \beta^k \sum_{\ell = 0}^k \binom{k}{\ell}\widetilde{m}_k,
    \end{equation}
    where the $\widetilde{m}_k$ satisfy the recursion
    \begin{align}
      \widetilde{m}_0 &= \alpha, \\
      \widetilde{m}_k
      &= \alpha^{k}\left(\sum_{b = 1}^{k - 1}(1 - \beta^{-1})^b\right) - \sum_{a = 0}^{k - 1} \widetilde{m}_{a} \sum_{j = 1}^{k - 1} (-\alpha)^{k - j} q_{k, j, a}(\beta^{-1} - 1) \text{ for } k \geq 1.
    \end{align}
\end{lemma}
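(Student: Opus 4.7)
The plan is to read Lemma~\ref{lem:manova-recursion} as the ``main term'' limit of Lemma~\ref{lem:necklace-expansion} once its error terms have been killed by asymptotic freeness. To realize the MANOVA moments concretely, I would fix, for each $\iota$, deterministic orthogonal projections $\bA_0, \bB_0 \in \CC^{N \times N}$ with $\frac{1}{N}\Tr(\bA_0) \to \alpha$ and $\frac{1}{N}\Tr(\bB_0) \to \beta$, and set $\bA \colonequals \bU \bA_0 \bU^{*}$ for $\bU \in \sU(N)$ Haar-distributed, together with $\bB \colonequals \bB_0$. By Theorem~\ref{thm:invariant-model} we then have $\frac{1}{N}\EE \Tr(\bA\bB\bA)^k \to m_k$ for every $k \geq 1$; by Theorem~\ref{thm:invariant-freeness}, the asymptotic freeness relation
\[
\lim_{\iota \to \infty} \frac{1}{N}\, \EE \Tr\bigl((\bA - \alpha \bm I_N)(\bB - \beta \bm I_N)\bigr)^k = 0
\]
also holds for every $k \geq 1$.

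Next I would apply Lemma~\ref{lem:necklace-expansion} to this $\bA, \bB$, divide both sides of the recursion for $\widetilde{\mu}_k$ by $N$, take expectation, and send $\iota \to \infty$. Define $\widetilde{m}_k \colonequals \lim_{\iota} \frac{1}{N}\EE \widetilde{\mu}_k$, whose existence I would argue by induction on $k$: the base case $\widetilde{m}_0 = \lim \frac{1}{N}\EE \Tr(\bA) = \alpha$ is immediate, and in the inductive step every term on the right-hand side of the recursion for $\widetilde{\mu}_k$ is either an explicit polynomial in $\alpha,\beta$ multiplied by $N$, a term of the form $\widetilde{\mu}_a$ with $a < k$ weighted by a fixed polynomial in $\alpha,\beta$, or one of two ``error'' terms. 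The error term proportional to $\frac{1}{N}\Tr(\bB) - \beta$ vanishes deterministically since $\bB$ is deterministic with $\frac{1}{N}\Tr(\bB_0) \to \beta$, and the freeness term $\frac{1}{\beta^k}\Tr\bigl((\bA - \alpha\bm I_N)(\bB - \beta\bm I_N)\bigr)^k$ has $o(N)$ expectation by the display above. Taking $\frac{1}{N}\EE$ and passing to the limit extracts precisely the stated recursion for $\widetilde{m}_k$, and the identity $m_k = \beta^k \sum_{\ell = 0}^k \binom{k}{\ell} \widetilde{m}_{\ell}$ follows by applying the same operations to the opening display of Lemma~\ref{lem:necklace-expansion}, with the left-hand side converging to $m_k$ by Theorem~\ref{thm:invariant-model}.

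There is essentially no new obstacle here: the combinatorial content has already been absorbed into Lemma~\ref{lem:necklace-expansion}, which by design separates the deterministic ``main terms'' in $\alpha,\beta$ from the error terms that free probability forces to vanish. If one preferred a purely algebraic proof avoiding a specific random matrix model, one could instead instantiate a trace-free restatement of Lemma~\ref{lem:necklace-expansion} inside the tracial noncommutative probability space $\CC\langle \mathfrak{a}, \mathfrak{b} \rangle$ in which $\mathfrak{a}, \mathfrak{b}$ are free Bernoulli elements with parameters $\alpha, \beta$: since $\phi\bigl(((\mathfrak{a} - \alpha\, \id)(\mathfrak{b} - \beta\, \id))^k\bigr) = 0$ for all $k \geq 1$ by freeness, the same bookkeeping yields the recursion directly, and $m_k = \phi((\mathfrak{a}\mathfrak{b})^k)$ by Proposition~\ref{prop:manova-free-conv}.
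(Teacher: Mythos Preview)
Your proposal is correct and follows essentially the same approach as the paper: realize the MANOVA moments via the unitarily invariant model (Theorem~\ref{thm:invariant-model}), apply Lemma~\ref{lem:necklace-expansion}, and use asymptotic freeness (Theorem~\ref{thm:invariant-freeness}) together with the convergence $\frac{1}{N}\Tr(\bB) \to \beta$ to kill the two error terms after dividing by $N$ and passing to the limit. Your exposition is in fact a bit more detailed than the paper's, which simply says ``the other non-leading order terms are easily shown to also converge to zero,'' and your closing remark about working directly in $\CC\langle \mathfrak{a}, \mathfrak{b}\rangle$ with free Bernoulli elements is a valid alternative the paper does not spell out.
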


\begin{remark}[MANOVA moment formulas]
There is a more explicit formula known for these moments in terms of the Narayana polynomials (see, e.g., \cite{DE-2015-InfiniteRandomMatrixTheory}).
The prior work \cite{MMP-2019-RandomSubensembles}, which proved the $\beta = \frac{1}{2}$ case of Theorem~\ref{thm:untf} for ETFs, worked directly with this formula.
This formula, while a closed-form expression, seems rather difficult to use, while the recursion above, being more transparently a limit of the recursion in Lemma~\ref{lem:necklace-expansion}, makes it easier to compare the empirical and limiting moments.
We also have not been able to produce a direct combinatorial proof that the closed formula for these moments satisfies the recursion above.
We do, however, give a proof of this fact using generating functions and orthogonal polynomials in Appendix~\ref{app:gf}.
\end{remark}

\begin{proof}
    By Theorem~\ref{thm:invariant-model}, if $\bA$ and $\bB$ are orthogonal projections to uniformly random subspaces of $\CC^N$ of dimension $\alpha N$ and $\beta N$, respectively, then
    \begin{equation}
        m_k = \lim_{N \to \infty} \frac{1}{N}\EE \Tr(\bA\bB\bA)^k.
    \end{equation}
    The result then follows from Lemma~\ref{lem:necklace-expansion}, since, by Theorem~\ref{thm:invariant-freeness}, $\bA$ and $\bB$ are asymptotically free, whereby
    \begin{equation}
        \lim_{N \to \infty} \frac{1}{N}\EE \Tr((\bA - \alpha \bm I_N)(\bB - \beta \bm I_N))^k = 0,
    \end{equation}
    and the other non-leading order terms are easily shown to also converge to zero.
\end{proof}

\subsection{Error term recursion}

Finally, we may combine the results of the two preceding sections to give a description of the error term by which an empirical moment deviates from the corresponding MANOVA moment.
This will be especially useful for our study of the largest eigenvalue of $\bA\bB\bA$.

\begin{lemma}
    \label{lem:error-term-recursion}
    We have
    \begin{equation}
        \Tr(\bA\bB\bA)^k = \Ex_{X \sim \MANOVA(\alpha, \beta)}[X^k] \cdot N + \Delta_k,
    \end{equation}
    where
    \begin{equation}
        \Delta_k = \beta^k \sum_{\ell = 0}^k \binom{k}{\ell} \widetilde{\Delta}_{\ell},
    \end{equation}
    where the $\widetilde{\Delta}_{\ell}$ satisfy the recursion
    \begin{align*}
      \widetilde{\Delta}_0 &= \Tr(\bA) - \alpha N, \numberthis \\
      \widetilde{\Delta}_k &= -\sum_{a = 0}^{k - 1} \widetilde{\Delta}_a \sum_{j = 1}^{k - 1} (-\alpha)^{k - j} q_{k, j, a}(\beta^{-1} - 1) \\
                           &\hspace{2cm} + (-1)^{k + 1} \frac{\alpha^k}{\beta}\left(\sum_{b = 0}^{k - 1}(1 - \beta^{-1})^b\right)\left(\Tr(\bB) - \beta N \right) \\
                           &\hspace{2cm} + \frac{1}{\beta^k}\Tr\big((\bA - \alpha \bm I_N)(\bB - \beta \bm I_N)\big)^k \text{ for } k \geq 1. \numberthis
    \end{align*}
\end{lemma}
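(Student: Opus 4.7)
The plan is to obtain this Lemma by a direct term-by-term subtraction of the recursion in Lemma~\ref{lem:manova-recursion} (multiplied by $N$) from the recursion in Lemma~\ref{lem:necklace-expansion}. Concretely, I would define
\[
    \widetilde{\Delta}_\ell \colonequals \widetilde{\mu}_\ell - \widetilde{m}_\ell\, N
\]
for $\ell \geq 0$, using the notation of the two previous lemmas. The outer identity
\[
    \Tr(\bA\bB\bA)^k - \Ex_{X \sim \MANOVA(\alpha,\beta)}[X^k]\cdot N = \beta^k \sum_{\ell = 0}^k \binom{k}{\ell}\widetilde{\Delta}_\ell
\]
then follows immediately by linearity from the two $\binom{k}{\ell}$-weighted expansions already established, so it remains only to verify that the $\widetilde{\Delta}_\ell$ satisfy the claimed recursion.

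The base case $\widetilde{\Delta}_0 = \Tr(\bA) - \alpha N$ is immediate from $\widetilde{\mu}_0 = \Tr(\bA)$ and $\widetilde{m}_0 = \alpha$. For $k \geq 1$, subtracting the two recursions, the purely deterministic first term $\alpha^k(\sum_{b=1}^{k-1}(1-\beta^{-1})^b)\, N$ appears in both $\widetilde{\mu}_k$ and in $\widetilde{m}_k \cdot N$ and therefore cancels exactly. The double sum involving the polynomials $q_{k,j,a}(\beta^{-1}-1)$ combines by linearity:
\[
    -\sum_{a=0}^{k-1}\big(\widetilde{\mu}_a - \widetilde{m}_a N\big)\sum_{j=1}^{k-1}(-\alpha)^{k-j} q_{k,j,a}(\beta^{-1}-1) = -\sum_{a=0}^{k-1}\widetilde{\Delta}_a \sum_{j=1}^{k-1}(-\alpha)^{k-j} q_{k,j,a}(\beta^{-1}-1).
\]
The remaining two terms in the recursion for $\widetilde{\mu}_k$ --- the one involving $\frac{1}{N}\Tr(\bB) - \beta$ and the one involving $\Tr((\bA-\alpha\bm I_N)(\bB - \beta\bm I_N))^k$ --- have no counterpart in the recursion for $\widetilde{m}_k$ and so they are inherited unchanged by $\widetilde{\Delta}_k$, after rewriting $(\frac{1}{N}\Tr(\bB) - \beta)N = \Tr(\bB) - \beta N$. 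This yields precisely the recursion in the statement.

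There is no real obstacle here: once Lemmas~\ref{lem:necklace-expansion} and \ref{lem:manova-recursion} are in hand the result is a bookkeeping exercise, since the two recursions are built to share the same ``structural'' coefficients $(-\alpha)^{k-j} q_{k,j,a}(\beta^{-1}-1)$ and the same leading term $\alpha^k(\sum_{b=1}^{k-1}(1-\beta^{-1})^b)$. The only mild care needed is to ensure that the ``$N$-scaling'' is consistent: the recursion for $\widetilde{m}_k$ in Lemma~\ref{lem:manova-recursion} describes a normalized (per-$N$) quantity, while $\widetilde{\mu}_k$ in Lemma~\ref{lem:necklace-expansion} is an unnormalized trace, so one must multiply the former by $N$ before subtracting, which is exactly what the definition of $\widetilde{\Delta}_\ell$ does.
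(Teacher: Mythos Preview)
Your proposal is correct and takes essentially the same approach as the paper's own proof, which simply says the result follows by subtracting the recursion of Lemma~\ref{lem:manova-recursion} from that of Lemma~\ref{lem:necklace-expansion}. You have merely spelled out the bookkeeping in more detail than the paper does.
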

\begin{proof}
    The result follows simply from subtracting the recursion of Lemma~\ref{lem:manova-recursion} from that of Lemma~\ref{lem:necklace-expansion}.
\end{proof}

\section{Maximum eigenvalue in Kesten-McKay case}
\label{sec:max-eval-km}

\subsection{Sufficient condition: Proof of Theorem~\ref{thm:edge-km}}

Recall that our goal is to show that $\lambda_{\max}(\bA\bB\bA)$ converges in probability to $\edge(\alpha, \beta)$.
We begin with some simple reductions of the original statement.
We have assumed that one of $\alpha$ or $\beta$ equals $\frac{1}{2}$, but since we may exchange the roles of $\bA$ and $\bB$ without changing $\lambda_{\max}(\bA\bB\bA) = \|(\bA\bB)(\bB\bA)\|$, let us assume without loss of generality that $\beta = \frac{1}{2}$.

Recall that we assume the e.s.d.\ of $\bA\bB\bA$ converges in probability to $\MANOVA(\alpha, \beta = \frac{1}{2})$.
So, by the definition of support, we must have for any $\epsilon > 0$ that $\lambda_{\max}(\bA\bB\bA) \geq \edge(\alpha, \frac{1}{2}) - \epsilon$ with high probability.
Thus, it suffices to show the upper bound that, again for any $\epsilon > 0$, with high probability $\lambda_{\max}(\bA\bB\bA) \leq \edge(\alpha, \frac{1}{2}) + \epsilon$.

When $\alpha \geq \frac{1}{2}$, then $\edge(\frac{1}{2},\alpha) = 1$.
We have $\|\bA\bB\bA\| \leq \|\bA\|^2\|\bB\| = 1$, so in this case the upper bound follows trivially and with probability 1.
So, we may further assume that $\alpha < \frac{1}{2}$.
In this case,
\begin{equation}
    \edge(\alpha, \beta) = r_+(\alpha, \beta) = \alpha + \beta - 2\alpha\beta + 2\sqrt{\alpha(1 - \alpha)\beta(1 - \beta)} = \frac{1}{2} + \sqrt{\alpha(1 - \alpha)}.
\end{equation}

Let us next establish what we expect from the error terms in Lemma~\ref{lem:error-term-recursion} if we are to use the Lemma to prove our result.
It will suffice to show that $|\Delta_k| \leq \edge(\frac{1}{2}, \alpha)^k \exp(o(k))$.
Now, if we have an exponential growth bound $|\widetilde{\Delta}_k| \leq c^k \exp(o(k))$, then the Lemma implies that $|\Delta_k| \leq (\frac{1}{2})^k(1 + c)^k\exp(o(k)) = (\frac{1}{2} + \frac{1}{2}c)^k\exp(o(k))$.
Thus, our attempt will succeed so long as $\frac{1}{2} + \frac{1}{2}c \leq \edge(\frac{1}{2}, \alpha) = \frac{1}{2} + \sqrt{\alpha(1 - \alpha)}$, or equivalently so long as $c \leq 2\sqrt{\alpha(1 - \alpha)}$.
Our proof will proceed by solving the recursion for the $\widetilde{\Delta}_k$ and verifying that such a bound does hold.

It is in solving the recursion, to which we turn our attention now, that we will rely on our assumption that $\beta = \frac{1}{2}$.
We first show that in this case, the linear combination of $q_{k, j, a}(\beta^{-1} - 1) = q_{k, j, a}(1)$ appearing in the recursion has a concise combinatorial description.

\begin{definition}[Modified Lucas triangle]
    For $n \geq 1$ and $0 \leq j \leq n$, we define
    \begin{equation}
        T(n, j) \colonequals \binom{n - 1}{j} + 2 \binom{n - 1}{j - 1} = \binom{n}{j} + \binom{n - 1}{j - 1}.
    \end{equation}
    We also define $T(0, 0) \colonequals 1$, though we emphasize that this is \emph{not} the standard extension to the case $n = 0$, which is $T(0, 0) = 2$.
\end{definition}

\begin{proposition}
    \label{prop:q-sum-km}
    For all $k \geq 1$ and $0 \leq a \leq k$,
    \begin{equation}
        \sum_{j = 1}^{k} (-\alpha)^{k - j} q_{k, j, a}(1) = \left\{\begin{array}{ll} T(\frac{k}{2}, \frac{k}{2}) (\alpha(\alpha - 1))^{(k - a) / 2} - 2\alpha^k & \text{if } 0 = a \equiv k \Mod{2}, \\ T(\frac{k + a}{2}, \frac{k - a}{2}) (\alpha(\alpha - 1))^{(k - a) / 2} & \text{if } 0 < a \equiv k \Mod{2}, \\ 0 & \text{otherwise}. \end{array}\right.
    \end{equation}
\end{proposition}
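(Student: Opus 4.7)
The plan is to evaluate $q_{k,j,a}(1)$ combinatorially, then sum the resulting expression using standard binomial identities.

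First, I would evaluate the inner factor of $q_{k,j,a}$ at $x=1$. The telescoping sum $\sum_{b=0}^{p-1}(-1)^{p-1-b}$ equals $1$ if $p$ is odd and $0$ if $p$ is even, while $(-1)^p\One\{i \notin A\}$ carries the obvious sign. Combining, the $i$-th factor of the product equals $\One\{i \in A\}$ when $p_i(S)$ is odd and $\One\{i \notin A\}$ when $p_i(S)$ is even. Therefore the product vanishes unless $A$ is precisely the set of indices with $p_i(S)$ odd, in which case it equals $1$. This collapses the definition to the combinatorial count
\begin{equation*}
q_{k,j,a}(1) = \#\{S \in \textstyle\binom{[k]}{j} : \bm p(S) \text{ has exactly } a \text{ odd parts}\}.
\end{equation*}

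Next I would translate from subsets to compositions of $k$. Each composition $(p_1,\dots,p_j)$ of $k$ into $j$ positive parts is hit by exactly $p_j$ subsets $S$, the free parameter being $s_1 \in \{1,\dots,p_j\}$. Since the constraint ``exactly $a$ odd parts'' is invariant under cyclic rotation of the composition, summing $p_j$ over compositions with $a$ odd parts gives $\frac{k}{j}N(k,j,a)$, where $N(k,j,a)$ counts such compositions. A short generating-function calculation (writing odd parts as $2u+1$ and even parts as $2(v+1)$ with $u,v \geq 0$) yields
\begin{equation*}
N(k,j,a) = \binom{j}{a}\binom{(k+a)/2-1}{j-1}
\end{equation*}
when $k \equiv a \pmod{2}$, and $N(k,j,a) = 0$ otherwise; the latter immediately gives the ``otherwise'' case of the proposition.

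With $n = (k+a)/2$ and $m = (k-a)/2$, the sum in question becomes a binomial sum. For $a \geq 1$, I would rewrite $\frac{k}{j}\binom{j}{a} = \frac{k}{a}\binom{j-1}{a-1}$, apply the trinomial revision identity $\binom{n-1}{j-1}\binom{j-1}{a-1} = \binom{n-1}{a-1}\binom{n-a}{j-a}$, reindex by $r = j - a$, and invoke the binomial theorem:
\begin{equation*}
\sum_{j=1}^{k}(-\alpha)^{k-j}q_{k,j,a}(1) = \frac{k}{a}\binom{n-1}{a-1}(-\alpha)^{k-a}(1 - 1/\alpha)^{n-a}.
\end{equation*}
Using $k - a = 2m$ and $n - a = m$, the $\alpha$-dependent factor simplifies to $(\alpha(\alpha-1))^m$, and a short manipulation identifies $\frac{k}{a}\binom{n-1}{a-1}$ with $T(n,m)$, since both equal $\frac{(n+m)(n-1)!}{m!(n-m)!}$. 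The case $a = 0$ with $k$ even must be treated separately, as $1/a$ is undefined; there I would use the alternate absorption $\frac{k}{j}\binom{k/2-1}{j-1} = 2\binom{k/2}{j}$ and directly evaluate $2\sum_{j=1}^{k/2}\binom{k/2}{j}(-\alpha)^{k-j}$ by the binomial theorem, with the missing $j=0$ term producing the extra $-2\alpha^k$ and the identification $T(k/2,k/2) = 2$ matching the stated formula.

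The main obstacle is bookkeeping rather than depth: carefully tracking the parity hypotheses and isolating the $a=0$ branch (where the general manipulation degenerates) is the only subtle point. Everything else is routine application of the absorption and trinomial revision identities.
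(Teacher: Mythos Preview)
Your proof is correct and takes a genuinely different route from the paper's. The paper proves this identity via generating functions: it invokes the rational generating function $Q(w,x,y,z)$ for the polynomials $q_{k,j,a}$ (computed separately in an appendix), specializes to $Q(1,-\alpha x,-1/\alpha,z)$, independently writes down the bivariate generating function of the claimed right-hand side, and then checks a rational-function identity symbolically. Your approach instead evaluates $q_{k,j,a}(1)$ directly as the count $\frac{k}{j}N(k,j,a)$ of cyclic compositions of $k$ into $j$ parts with exactly $a$ odd parts, and then collapses the sum over $j$ with absorption, trinomial revision, and the binomial theorem. Your method is more elementary and self-contained --- it does not rely on the appendix generating-function machinery --- at the cost of a small case split at $a=0$; the paper's method handles all cases uniformly and reuses a tool needed elsewhere in the paper. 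Both routes reach the same answer cleanly.
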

\begin{proof}
    We give a generating function proof using the tools developed in Appendix~\ref{app:gf}.
    By Proposition~\ref{prop:q-gf}, and using the notation of that result, we have the generating function
    \begin{align*}
      \sum_{k \geq 1} \sum_{a = 0}^k \left(\sum_{j = 1}^{k} (-\alpha)^{k - j} q_{k, j, a}(1)\right)x^k z^a
      &= \sum_{k \geq 1} \sum_{j = 0}^{k} \sum_{a = 0}^k q_{k, j, a}(1) (-\alpha x)^k \left(-\frac{1}{\alpha}\right)^j z^a \\
      &= Q\left(1, -\alpha x, -\frac{1}{\alpha}, z\right) \\
      &= \frac{-2\alpha x^2 + (\alpha^2 x^3 + x)z}{(\alpha^4 - \alpha^3)x^4 - (2\alpha^2 - \alpha)x^2 + (\alpha^2x^3 - x)z + 1}.
    \end{align*}
    On the other hand, we may write the generating function of the right-hand side as
    \begin{align*}
      &\sum_{k \geq 1} \sum_{a = 0}^k \left\{\begin{array}{ll} T(\frac{k}{2}, \frac{k}{2}) (\alpha(\alpha - 1))^{(k - a) / 2} - 2\alpha^k & \text{if } 0 = a \equiv k \Mod{2}, \\ T(\frac{k + a}{2}, \frac{k - a}{2}) (\alpha(\alpha - 1))^{(k - a) / 2} & \text{if } 0 < a \equiv k \Mod{2}, \\ 0 & \text{otherwise} \end{array}\right\}x^k z^a \\
      &= \sum_{k \geq 1} \sum_{\substack{0 \leq a \leq k \\ a \equiv k \Mod{2}}} T\left(\frac{k + a}{2}, \frac{k - a}{2}\right) (\alpha(\alpha - 1))^{(k - a) / 2}x^kz^a - 2\sum_{k \geq 1} \alpha^{2k} x^{2k} \\
      &= \sum_{k \geq 1} \sum_{a = 0}^k T\left(k, a\right) (zx)^{k}\left(\frac{\alpha(\alpha - 1)x}{z}\right)^{a} - \frac{2\alpha^2 x^2}{1 - \alpha^2 x^2}
        \intertext{and, separating out the $a = 0$ terms for which $T(k, 0) = 1$ and substituting for the remaining terms the definition of $T(k, a)$,}
                                                                                                                                          &= \alpha(\alpha - 1)x^2 \sum_{k \geq 0}\sum_{a = 0}^k \binom{k}{a} (zx)^{k}\left(\frac{\alpha(\alpha - 1)x}{z}\right)^{a} + \sum_{k \geq 1}\sum_{a = 1}^k \binom{k}{a} (zx)^{k}\left(\frac{\alpha(\alpha - 1)x}{z}\right)^{a} \\
      &\hspace{1cm} + \sum_{k \geq 1} (zx)^k - \frac{2\alpha^2 x^2}{1 - \alpha^2 x^2} \\
      &= \alpha(\alpha - 1)x^2 \sum_{k \geq 0} (zx)^k \left(1 + \frac{\alpha(\alpha - 1)x}{z}\right)^k + \sum_{k \geq 1} (zx)^k \left(\left(1 + \frac{\alpha(\alpha - 1)x}{z}\right)^k - 1\right) \\
      &\hspace{1cm} + \sum_{k \geq 1} (zx)^k - \frac{2\alpha^2 x^2}{1 - \alpha^2 x^2} \\
      &= \frac{zx + 2\alpha(\alpha - 1)x^2}{1 - zx - \alpha(\alpha - 1)x^2} - \frac{2\alpha^2 x^2}{1 - \alpha^2 x^2}, \numberthis
    \end{align*}
    and it is straightforward to verify the remaining algebraic equality.
\end{proof}

We now use this observation to solve the recursion.
The solution is strikingly simple thanks to the connection between $q_{k, j, a}(1)$ and the Lucas triangle.
\begin{lemma}
    \label{lem:tDelta-tdelta}
    In the setting of Lemma~\ref{lem:error-term-recursion} with $\beta = \frac{1}{2}$, define
    \begin{align*}
      \widetilde{\delta}_0 &\colonequals \Tr(\bA - \alpha \bm I_N) = \widetilde{\Delta}_0, \numberthis \\
      \widetilde{\delta}_k
                           &\colonequals \Tr\big((\bA - \alpha \bm I_N)(2\bB - \bm I_N)\big)^k + \One\{k \equiv 0 \Mod{2}\} \, 2\alpha^k \left(\Tr(\bA) - \alpha N \right) \\
      &\hspace{5.6cm} + \One\{k \equiv 1 \Mod{2}\} \, 2\alpha^k \left(\Tr(\bB) - \frac{1}{2}N \right). \numberthis
    \end{align*}
    Then, for all $k \geq 1$,
    \begin{equation}
        \widetilde{\Delta}_k = \sum_{\substack{0 \leq a \leq k \\ a \equiv k \Mod{2}}} \binom{k}{\frac{k + a}{2}} (\alpha(1 - \alpha))^{\frac{k - a}{2}} \widetilde{\delta}_a.
    \end{equation}
\end{lemma}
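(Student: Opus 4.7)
My plan is strong induction on $k$. The base case $k = 1$ is immediate from Lemma~\ref{lem:error-term-recursion} with $\beta = \frac{1}{2}$: the recursive sum over $j$ is empty, and the remaining source terms collapse to give $\widetilde{\Delta}_1 = 2\alpha(\Tr(\bB) - \frac{1}{2}N) + \Tr((\bA - \alpha\bm I_N)(2\bB - \bm I_N)) = \widetilde{\delta}_1$, matching $\binom{1}{1}(\alpha(1-\alpha))^0\widetilde{\delta}_1$.

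For the inductive step, I substitute the hypothesis for $\widetilde{\Delta}_a$ ($0 \le a < k$, using $\widetilde{\Delta}_0 = \widetilde{\delta}_0$) into Lemma~\ref{lem:error-term-recursion} and specialize to $\beta = \tfrac{1}{2}$. The scalar $(-1)^{k+1}\frac{\alpha^k}{\beta}\sum_{b}(1 - \beta^{-1})^b$ reduces to $2\alpha^k$ when $k$ is odd and $0$ otherwise, while $\frac{1}{\beta^k}\Tr((\bA - \alpha\bm I_N)(\bB - \frac{1}{2}\bm I_N))^k$ becomes $\Tr((\bA - \alpha\bm I_N)(2\bB - \bm I_N))^k$. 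Proposition~\ref{prop:q-sum-km} gives a closed form for $S_{k,a} := \sum_{j=1}^{k}(-\alpha)^{k-j}q_{k,j,a}(1)$ that vanishes unless $a \equiv k \pmod{2}$, preserving the parity structure of the claimed expansion. The coefficient of $\widetilde{\delta}_k$ evaluates to $1 = \binom{k}{k}t^0$: when $k$ is even, the $-2\alpha^k$ correction inside $S_{k,0}$ exactly cancels the $-2\alpha^k\widetilde{\delta}_0$ contribution that appears when rewriting $\Tr((\bA - \alpha\bm I_N)(2\bB - \bm I_N))^k = \widetilde{\delta}_k - 2\alpha^k\widetilde{\delta}_0$ in the $\widetilde{\delta}$-basis. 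Collecting the coefficient of $\widetilde{\delta}_b$ for $0 \le b < k$ with $b \equiv k \pmod{2}$, and substituting $a = k - 2m$ and $n = (k+b)/2$, reduces the entire claim to the purely combinatorial identity
\begin{equation*}
    \sum_{m=0}^{k-n}(-1)^m\, T(k - m,\, m)\, \binom{k - 2m}{n - m} \;=\; 0 \qquad \text{for } \lceil k/2 \rceil \le n \le k - 1.
\end{equation*}

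The main obstacle is proving this identity, and I plan to handle it via generating functions. Writing $T(k-m,m) = \binom{k-m-1}{m} + 2\binom{k-m-1}{m-1}$ decomposes the bivariate series $U_k(y) := \sum_{n \ge 0}\big(\sum_m(-1)^m T(k-m,m)\binom{k-2m}{n-m}\big) y^n$ as $A_k(y) + 2B_k(y)$. A Pascal's-identity telescoping applied to $A_k$ yields the Fibonacci-type recursion $A_k = (1+y)A_{k-1} - yA_{k-2}$ with $A_0 = 0$, $A_1 = 1+y$; its characteristic polynomial factors as $(x-1)(x-y)$, giving the closed form $A_k(y) = (1+y)(1 - y^k)/(1-y)$. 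A parallel index shift identifies $B_k(y) = -\frac{y}{1+y}A_{k-1}(y)$, and the combination $U_k = A_k - \frac{2y}{1+y}A_{k-1}$ simplifies by elementary algebra to $U_k(y) = 1 + y^k$ for all $k \ge 1$. Since $[y^n](1 + y^k) = \delta_{n,0} + \delta_{n,k}$, the coefficient vanishes on the range $\lceil k/2 \rceil \le n \le k-1$, establishing the identity and completing the induction.
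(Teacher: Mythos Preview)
Your proof is correct and takes a genuinely different route from the paper. The paper rewrites the recursion as an infinite lower-triangular linear system $\widetilde{\delta} = X\widetilde{\Delta}$ with $X_{ka} = T(\tfrac{k+a}{2},\tfrac{k-a}{2})$, and then invokes Proposition~\ref{prop:riordan-inv} (proved in Appendix~\ref{app:riordan} via the machinery of Riordan arrays) to assert that $X^{-1}$ has entries $(-1)^{(k+a)/2}\binom{k}{(k+a)/2}$, from which the claimed formula for $\widetilde{\Delta}_k$ follows by matrix inversion. Your approach instead substitutes the inductive hypothesis directly, reducing everything to the single scalar identity $\sum_{m}(-1)^m T(k-m,m)\binom{k-2m}{n-m}=0$ for $1\le n\le k-1$, which you then prove by an elementary generating-function calculation establishing $U_k(y)=1+y^k$. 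Your argument and the paper's are equivalent in content---the identity $U_k(y)=1+y^k$ is exactly the statement that $XY=I$ row by row---but your route is more self-contained, avoiding the Riordan-array formalism entirely in favor of the Fibonacci-type recursion $A_k=(1+y)A_{k-1}-yA_{k-2}$ and its explicit solution. The paper's approach buys a cleaner conceptual picture (identifying the inverse of a structured matrix) and connects to a known combinatorial framework; yours buys directness and requires no external apparatus. One small expository wrinkle: the sentence beginning ``The coefficient of $\widetilde{\delta}_k$ evaluates to $1$\dots'' conflates the $\widetilde{\delta}_k$ and $\widetilde{\delta}_0$ bookkeeping; the cancellation you describe is about the $\widetilde{\delta}_0$ coefficient, not $\widetilde{\delta}_k$, though the underlying computation is right.
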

\noindent
This result will follow fairly directly from the following combinatorial fact.
We defer its proof, which uses results from the theory of Riordan arrays and their generating functions, to Appendix~\ref{app:riordan}.
We emphasize that, using these tools, the proof of this result is a straightforward generating function calculation.
The result also appears not to be original; its statement is mentioned without proof in the Online Encyclopedia of Integer Sequences as the relationship between sequences \href{https://oeis.org/A156290}{[A156290]} and \href{https://oeis.org/A156308}{[A156308]} and sequences \href{https://oeis.org/A113187}{[A113187]} and \href{https://oeis.org/A111125}{[A111125]}.
\begin{proposition}
    \label{prop:riordan-inv}
    Let $\bX, \bY$ denote infinite lower-triangular matrices with entries given by, for $k, a \geq 0$,
    \begin{align}
      X_{ka} &= \One\{a \leq k\} \, \One\{a \equiv k \Mod{2}\}\, T\left(\frac{k + a}{2}, \frac{k - a}{2}\right), \\
      Y_{ka} &= \One\{a \leq k\} \, \One\{a \equiv k \Mod{2}\}\, (-1)^{(k + a) / 2}\binom{k}{ \frac{k + a}{2}}.
    \end{align}
    These matrices are inverses, in the sense that $\bX\bY = \bY\bX$ is the infinite identity matrix; in other words, for all $k, \ell \geq 0$,
    \begin{equation}
        \sum_{a = 0}^{\infty} X_{ka} Y_{a\ell} = \sum_{a = 0}^{\infty} Y_{\ell a} X_{ak} = \One\{k = \ell\},
    \end{equation}
    where we emphasize that this statement only involves finite summations since all but finitely many terms are zero.
\end{proposition}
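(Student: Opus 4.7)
The plan is to recognize both $\bX$ and $\bY$ as Riordan arrays and verify that their generating-function descriptions form an inverse pair in the Riordan group, from which the matrix identity $\bX\bY = \bY\bX = \bI$ follows immediately.

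First I would determine the column generating functions of $\bX$. The nonzero entries of column $a$ occur at rows $k = a + 2m$ for $m \geq 0$, with value $T(a+m, m) = \binom{a+m}{m} + \binom{a+m-1}{m-1}$. Summing against $z^k$ and using the standard identity $\sum_{m \geq 0} \binom{n+m}{m} x^m = (1-x)^{-(n+1)}$ twice, one finds
\[
\sum_{k \geq 0} X_{ka}\, z^k \;=\; \frac{z^a(1+z^2)}{(1-z^2)^{a+1}} \;=\; \frac{1+z^2}{1-z^2}\cdot\left(\frac{z}{1-z^2}\right)^{\!a},
\]
so $\bX$ is the Riordan array with parameters $(g,f) = \bigl((1+z^2)/(1-z^2),\, z/(1-z^2)\bigr)$.

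Next I would compute the Riordan inverse, namely the array $(1/(g\circ \bar f),\, \bar f)$ with $\bar f = f^{\langle -1\rangle}$. Solving $w = z/(1-z^2)$ leads to the quadratic $wz^2 + z - w = 0$, whose branch vanishing at the origin is $\bar f(w) = (\sqrt{1+4w^2}-1)/(2w)$. A short algebraic manipulation (rationalizing via multiplication by the conjugate radical) shows $g(\bar f(w)) = \sqrt{1+4w^2}$, so the inverse Riordan array has column generating functions
\[
\frac{1}{\sqrt{1+4z^2}}\cdot\left(\frac{\sqrt{1+4z^2}-1}{2z}\right)^{\!a}.
\]

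Then I would compute the column generating functions of $\bY$ directly. Writing $k = a + 2m$, column $a$ consists of the entries $(-1)^{a+m}\binom{a+2m}{a+m}$, and the Catalan-type identity
\[
\sum_{m\geq 0}\binom{a+2m}{a+m} x^m \;=\; \frac{1}{\sqrt{1-4x}}\left(\frac{1-\sqrt{1-4x}}{2x}\right)^{\!a},
\]
specialized to $x = -z^2$, yields precisely the column generating function of $\bX^{-1}$ obtained above (up to tracking the branch-of-radical sign). Since the two Riordan arrays agree column by column, $\bY = \bX^{-1}$, which gives both matrix identities in the statement.

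The main technical step is the radical manipulation showing $g(\bar f(w)) = \sqrt{1+4w^2}$ and the careful bookkeeping of the sign factor $(-1)^{(k+a)/2}$ as it matches the sign produced by substituting $x = -z^2$ into the Catalan generating function; this is where the parity conditions $a \equiv k \pmod 2$ and the lower-triangularity play their essential role. Everything else reduces to routine manipulations of rational and algebraic generating functions.
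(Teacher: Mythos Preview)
Your approach of treating $\bX$ directly as the single Riordan array $\bigl((1+z^2)/(1-z^2),\, z/(1-z^2)\bigr)$ and computing its Riordan inverse is valid and in fact more streamlined than the paper's proof, which splits $\bX$ and $\bY$ by parity of the row and column indices into two blocks each, identifies each of the four blocks as a Riordan array in the variable $z$, and verifies two separate inverse pairs. Your single-array treatment automatically encodes the parity structure through the $z^2$ dependence and avoids that decomposition.

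However, the step you defer as ``careful bookkeeping of the sign factor $(-1)^{(k+a)/2}$'' is exactly where a discrepancy surfaces, and it is not a branch-of-radical ambiguity. Carrying it out: with $k=a+2m$ the column-$a$ generating function of $\bY$ is
\[
(-1)^a z^a\sum_{m\ge 0}\binom{a+2m}{m}(-z^2)^m
\;=\;\frac{1}{\sqrt{1+4z^2}}\left(\frac{1-\sqrt{1+4z^2}}{2z}\right)^{\!a},
\]
whereas the Riordan inverse of $\bX$ that you (correctly) computed has columns $\dfrac{1}{\sqrt{1+4z^2}}\Bigl(\dfrac{\sqrt{1+4z^2}-1}{2z}\Bigr)^{a}$. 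These differ by $(-1)^a$, so $\bY\neq\bX^{-1}$ as stated; indeed one checks directly that $(\bX\bY)_{11}=X_{11}Y_{11}=T(1,0)\cdot(-1)^{1}\binom{1}{1}=-1$. The proposition therefore has a sign typo: the exponent of $-1$ in $Y_{ka}$ should be $(k-a)/2$ rather than $(k+a)/2$. With that correction your argument goes through verbatim, and the application in the paper is unaffected since $(-1)^{(k-a)/2}$ is exactly absorbed when passing from powers of $\alpha(\alpha-1)$ to powers of $\alpha(1-\alpha)$ there. The paper's own parity-block proof inherits the same typo (its stated Riordan parameters for the odd block have $G(0)=1$ while the $(0,0)$ entry of that block is $-1$), so this is a correction to the statement rather than a defect of your method.
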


\begin{proof}[Proof of Lemma~\ref{lem:tDelta-tdelta}]
    The recursion of Lemma~\ref{lem:error-term-recursion} may be rewritten, using that $\beta^{-1} = 2$ and applying Proposition~\ref{prop:q-sum-km}, as, for $k \geq 1$,
    \begin{align*}
      \widetilde{\Delta}_k &= -\sum_{a = 0}^{k - 1} \widetilde{\Delta}_a \sum_{j = 1}^{k - 1} (-\alpha)^{k - j} q_{k, j, a}(1) \\
                           &\hspace{2cm} + \One\{k \equiv 1 \Mod{2}\} \, \alpha^k\left(\Tr(\beta^{-1}\bB) - N \right) \\
                           &\hspace{2cm} + \Tr\big((\bA - \alpha \bm I_N)(\beta^{-1}\bB - \bm I_N)\big)^k \\
                           &= -\sum_{\substack{0 \leq a \leq k - 1 \\ a \equiv k \Mod{2}}} T\left(\frac{k + a}{2}, \frac{k - a}{2}\right) \widetilde{\Delta}_a (\alpha(\alpha - 1))^{(k - a) / 2} \\
                           &\hspace{2cm} + \One\{k \equiv 0 \Mod{2}\} \, 2\alpha^k(\underbrace{\Tr(\bA) - \alpha N}_{= \widetilde{\Delta}_0}) \\
                           &\hspace{2cm} + \One\{k \equiv 1 \Mod{2}\} \, \alpha^k\left(\Tr(\beta^{-1}\bB) - N \right) \\
                           &\hspace{2cm} + \Tr\big((\bA - \alpha \bm I_N)(\beta^{-1}\bB - \bm I_N)\big)^k. \numberthis
    \end{align*}
    The last three terms here are precisely $\widetilde{\delta}_k$.
    Since $T(k, 0) = 1$ for all $k$, we may rewrite again as an implicit recursion,
    \begin{equation}
        \label{eq:delta-Delta}
        (\alpha(\alpha - 1))^{-(k + e(k))/2} \widetilde{\delta}_k = \sum_{\substack{0 \leq a \leq k \\ a \equiv k \Mod{2}}} T\left(\frac{k + a}{2}, \frac{k - a}{2}\right) \widetilde{\Delta}_a (\alpha(\alpha - 1))^{-(a + e(a)) / 2},
    \end{equation}
    where $e(k) \colonequals \One\{k \equiv 1 \Mod{2}\}$ and adding this ensures that no fractional powers are taken no matter the parity of $k$.
    We note also that this equation extends to $k = 0$, by the combination of our definitions of $\widetilde{\delta}_0 \colonequals \widetilde{\Delta}_0$ and $T(0, 0) \colonequals 1$ (this is the reason for the unconventional latter definition).

    The result will now follow from computing the inverse of the infinite lower-triangular matrix formed by the $(\One\{a \equiv k \Mod{2}\}\, T(\frac{k + a}{2}, \frac{k - a}{2}))_{0 \leq a \leq k}$, where $k$ indexes the rows and $a$ indexes the columns.
    By Proposition~\ref{prop:riordan-inv}, this inverse is given by the infinite lower-triangular matrix formed by $(\One\{a \equiv k \Mod{2}\}\, (-1)^{(k + a) / 2} \binom{k}{(k + a) / 2})_{0 \leq a \leq k}$.
    Thus, inverting \eqref{eq:delta-Delta} gives
    \begin{equation}
        \widetilde{\Delta}_k (\alpha(\alpha - 1))^{-(k + e(k)) / 2} = \sum_{\substack{0 \leq a \leq k \\ a \equiv k \Mod{2}}} (-1)^{(k + a) / 2} \binom{k}{\frac{k + a}{2}} (\alpha(\alpha - 1))^{-(a + e(a)) / 2}\widetilde{\delta}_a,
    \end{equation}
    and the result follows upon rearranging.
\end{proof}

We now proceed to complete the main proof.
\begin{proof}[Proof of Theorem~\ref{thm:edge-km}]
    Note that
    \begin{equation}
        \max\left\{|\Tr(\bA) - \alpha N|, \left| \Tr(\bB) - \frac{1}{2}N\right|\right\} \lesssim N = \exp(o(k)).
    \end{equation}
    So, we have
    \begin{align*}
      |\widetilde{\delta}_k|
      &\leq \left|\Tr((\bA - \alpha \bm I_N)(2\bB - \bm I_N))^k\right| + \alpha^k \exp(o(k)) \\
      &= (\sqrt{\alpha(1 - \alpha)})^{k}\left|\Tr(\what{\bA}\what{\bB})^k\right| + \alpha^k \exp(o(k)) \\
      &\leq \left((\sqrt{\alpha(1 - \alpha)})^{k} + \alpha^k\right) \exp(o(k)),
        \intertext{and since $\alpha < \frac{1}{2}$ we have $\alpha < \sqrt{\alpha(1 - \alpha)}$, so we simply have}
      &\leq (\sqrt{\alpha(1 - \alpha)})^{k} \exp(o(k)). \numberthis
    \end{align*}
    Then, by Lemma~\ref{lem:tDelta-tdelta},
    \begin{align*}
      |\widetilde{\Delta}_k|
      &\leq \exp(o(k)) \sum_{\substack{0 \leq a \leq k \\ a \equiv k \Mod{2}}} \binom{k}{\frac{k + a}{2}} (\alpha(1 - \alpha))^{\frac{k - a}{2}} (\alpha(1 - \alpha))^{a/2} \\
      &= (\sqrt{\alpha(1 - \alpha)})^{k} \exp(o(k)) \sum_{\substack{0 \leq a \leq k \\ a \equiv k \Mod{2}}} \binom{k}{\frac{k + a}{2}} \\
      &\leq (2\sqrt{\alpha(1 - \alpha)})^{k} \exp(o(k)). \numberthis
    \end{align*}
    Note that this is precisely the bound on $|\widetilde{\Delta}_k|$ that we suggested would be sufficient at the beginning of this section.
    Finally, by Lemma~\ref{lem:error-term-recursion},
    \begin{align*}
      |\Delta_k|
      &\leq \frac{1}{2^k} \sum_{\ell = 0}^k \binom{k}{\ell} |\widetilde{\Delta}_{\ell}| \\
      &\leq \left(\frac{1}{2}(1 + 2\sqrt{\alpha(1 - \alpha)})\right)^k \exp(o(k)) \\
      &= \edge\left(\frac{1}{2}, \alpha\right)^k \exp(o(k)). \numberthis
    \end{align*}
    Now, since $N = \exp(o(k))$ by assumption, we have
    \begin{align}
      |\EE\Tr(\bA\bB\bA)^k| \leq \left|\Ex_{X \sim \MANOVA(\alpha, \beta)}[X^k]\right| \cdot N + |\Delta_k| \leq \edge\left(\frac{1}{2}, \alpha\right)^k \exp(o(k)).
    \end{align}
    We now apply a standard argument to complete the proof.
    Suppose $\epsilon > 0$.
    Since $\bA\bB\bA$ is positive semidefinite, and using Markov's inequality, we have
    \begin{align*}
      \PP\left[\lambda_{\max}(\bA\bB\bA) \geq \edge\left(\frac{1}{2}, \alpha\right) + \epsilon\right]
      &\leq \PP\left[\Tr(\bA\bB\bA)^k \geq \left(\edge\left(\frac{1}{2}, \alpha\right) + \epsilon\right)^k\right] \\
      &\leq \EE \Tr(\bA\bB\bA)^k \cdot \left(\edge\left(\frac{1}{2},\alpha\right) + \epsilon\right)^{-k} \\
      &\leq \left(1 + \frac{\epsilon}{\edge\left(\frac{1}{2}, \alpha\right)}\right)^{-k} \exp(o(k)), \numberthis
    \end{align*}
    which tends to zero as $\iota \to \infty$, since $k = k(\iota)$ is diverging.
\end{proof}

\begin{remark}[The general MANOVA case]
    \label{rem:max-eval-manova}
    It seems likely that Theorem~\ref{thm:edge-km} should generalize to the case of arbitrary $\alpha, \beta \in (0, 1)$.
    The main missing ingredient in such an argument would be a generalization to this case of Lemma~\ref{lem:tDelta-tdelta}.
    We do not know whether the summations treated by Proposition~\ref{prop:q-sum-km} still have some relation to Riordan arrays when $\beta \neq \frac{1}{2}$; this would be an interesting topic for further investigation.
\end{remark}

\subsection{Application to invariant model: Proof of Theorem~\ref{thm:edge-km-unitary}}

We first gather some preliminaries about the joint moments of entries of Haar-distributed unitary matrices.
The following is an important manifestation of the Schur-Weyl duality of representation theory.

\begin{theorem}[Corollary 2.4 of \cite{CS-2006-HaarMeasureMoments}]
    \label{thm:weingarten}
    Let $\bU \in \sU(N)$ be distributed according to Haar measure.
    There is a function $W = W_k: S_k \to \RR$, called the \emph{Weingarten function}, such that, for any $\bm i, \bm j \in [N]^k$ and $\bm i^{\prime}, \bm j^{\prime} \in [N]^{k^{\prime}}$, we have
    \begin{equation}
        \EE[U_{i_1j_1} \cdots U_{i_k j_k} \overline{U_{i_1^{\prime} j_1^{\prime}}} \cdots \overline{U_{i_{k^{\prime}}^{\prime} j_{k^{\prime}}^{\prime}}}] = \sum_{\substack{\sigma \in S_k \\ \sigma \bm i = \bm i^{\prime}}} \sum_{\substack{\tau \in S_k \\ \tau \bm j = \bm j^{\prime}}} W(\sigma\tau^{-1}).
    \end{equation}
\end{theorem}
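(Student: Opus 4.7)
The plan breaks into three steps: reducing to the case $k = k'$, invoking Schur--Weyl duality to pin down the algebraic structure of the expectation tensor, and identifying the resulting coefficients as a single function on $S_k$.

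First, to dispatch $k \neq k'$, I would use the phase invariance of Haar measure: since $\bU \stackrel{d}{=} e^{i\theta}\bU$ for every $\theta \in \RR$, the expectation on the left-hand side acquires a factor $e^{i(k - k')\theta}$, which forces it to vanish upon averaging over $\theta$. Thus we may assume $k = k'$. I would then organize the expectation as the single tensor
\[
\Phi \colonequals \EE\big[\bU^{\otimes k} \otimes \bar{\bU}^{\otimes k}\big] \in \End\big((\CC^N)^{\otimes k}\big) \otimes \End\big((\CC^N)^{\otimes k}\big),
\]
whose matrix elements are exactly the joint moments on the left-hand side. Using left-invariance ($\bU \stackrel{d}{=} \bV\bU$) and right-invariance ($\bU \stackrel{d}{=} \bU\bW$), $\Phi$ must be invariant under the ``row-side'' diagonal action $\bV^{\otimes k} \otimes \bar{\bV}^{\otimes k}$ and, separately, under the analogous ``column-side'' action, for every $\bV, \bW \in \sU(N)$.

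Under the identification $(\CC^N)^{\otimes k} \otimes (\overline{\CC^N})^{\otimes k} \cong \End\big((\CC^N)^{\otimes k}\big)$, on which $\bV \otimes \bar{\bV}$ acts by conjugation $A \mapsto \bV^{\otimes k} A (\bV^{\otimes k})^*$, Schur--Weyl duality identifies the $\sU(N)$-invariants as exactly the span of the $k!$ permutation operators $\{P_\sigma\}_{\sigma \in S_k}$ (where $P_\sigma$ permutes the $k$ tensor factors). Applying this on both the row and column index groups separately yields an expansion
\[
\Phi \;=\; \sum_{\sigma, \tau \in S_k} c(\sigma, \tau)\, P_\sigma \otimes P_\tau,
\]
and reading off matrix elements presents the left-hand side as a double sum over pairs $(\sigma, \tau)$ with $\sigma \bm i = \bm i'$ and $\tau \bm j = \bm j'$, weighted by some coefficients $c(\sigma, \tau)$.

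The final step, and the main obstacle, is upgrading $c(\sigma, \tau)$ to a function of $\sigma\tau^{-1}$ alone. For this I would regard the map $A \mapsto \EE[\bU^{\otimes k} A (\bU^{\otimes k})^*]$ as the orthogonal projection, in the Hilbert--Schmidt inner product, from $\End((\CC^N)^{\otimes k})$ onto the invariant subalgebra $\CC[S_k]$. For $N \geq k$ the Gram matrix $G_{\sigma,\tau} = \Tr(P_\sigma^* P_\tau) = N^{\cyc(\sigma^{-1}\tau)}$ is invertible and is a function of $\sigma^{-1}\tau$ only, so its inverse, taken as the definition of the Weingarten function $W_k$, is likewise a class function in this two-sided sense; matching this with $c(\sigma, \tau)$ gives the stated formula. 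For $N < k$, $G$ is singular but one uses its Moore--Penrose pseudoinverse instead, and the two-sided class-function property is preserved. The delicate point throughout is aligning the index conventions---which permutation acts on which tensor factor and whether $\sigma\tau^{-1}$ or $\tau^{-1}\sigma$ appears---so that the conditions ``$\sigma \bm i = \bm i'$'' and ``$\tau \bm j = \bm j'$'' appearing in the theorem match the specific matrix elements of the $P_\sigma, P_\tau$ produced by Schur--Weyl.
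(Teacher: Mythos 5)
The paper does not prove this statement at all: it is imported verbatim as Corollary 2.4 of the cited reference of Collins and \'Sniady, so there is no internal proof to compare against. Your sketch is essentially a reconstruction of the standard argument from that reference, and it is sound in outline: the phase-invariance reduction to $k = k'$ is correct (and consistent with the theorem, since for $k \neq k'$ the right-hand side is an empty sum); Schur--Weyl duality applied separately to the row and column index groups does place $\EE[\bU^{\otimes k} \otimes \bar{\bU}^{\otimes k}]$ in the span of $P_\sigma \otimes P_\tau$; and identifying the coefficients via the conditional expectation $A \mapsto \EE[\bU^{\otimes k} A (\bU^{\otimes k})^*]$ onto $\mathrm{span}\{P_\sigma\}$, with Gram matrix $G_{\sigma,\tau} = N^{|\cyc(\sigma^{-1}\tau)|}$ inverted (or pseudo-inverted when $N < k$) to define $W$, is exactly how the Weingarten function is constructed. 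Two small points worth making explicit if you write this up: self-adjointness of the conditional expectation in the Hilbert--Schmidt inner product uses invariance of Haar measure under $\bU \mapsto \bU^*$, and the convention worry you flag at the end ($\sigma\tau^{-1}$ versus $\sigma^{-1}\tau$) is harmless because $W$, being the inverse of a central element of $\CC[S_k]$ built from the class function $N^{|\cyc(\cdot)|}$, is itself a class function invariant under inversion.
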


The bound we use on the Weingarten function will be given in terms of the following measure of the ``size'' of a permutation.
\begin{definition}[Cayley weight]
    The \emph{Cayley weight} of a permutation $\sigma \in S_k$, denoted $|\sigma|$, is the minimum number of transpositions (2-cycles) that can be composed to form $\sigma$.
    For $\sigma, \tau \in S_k$, $|\sigma^{-1}\tau|$ is called the \emph{transposition distance} between $\sigma$ and $\tau$.
\end{definition}
\noindent
We recall some basic and well-known properties of these notions.
First, for any $\sigma$,
\begin{equation}
    \label{eq:transp-distance}
    |\sigma| = k - |\cyc(\sigma)|.
\end{equation}
Also, the transposition distance gives a metric on $S_k$, in particular satisfying the triangle inequality.
It may be interpreted as the minimum number of transpositions by which $\sigma$ must be multiplied to reach $\tau$.

The following crucial tool controls the Weingarten function with non-asymptotic bounds in a way depending only on the Cayley weight.
\begin{theorem}[Theorem 1.1 and Lemma 3.3 of \cite{CM-2017-WeingartenCalculusBounds}]
    \label{thm:weingarten-bound}
    If $N > \sqrt{6} k^{7/4}$, then, for any $\sigma \in S_k$ and for $W_N$ the function in Theorem~\ref{thm:weingarten},
    \begin{equation}
        |W_N(\sigma)| \leq \frac{1}{1 - \frac{6k^{7/2}}{N^2}} N^{-k - |\sigma|} \prod_{C \in \cyc(\sigma)} \mathsf{Cat}(|C| - 1) \leq \frac{1}{1 - \frac{6k^{7/2}}{N^2}} N^{-k} \left(\frac{4}{N}\right)^{|\sigma|},
    \end{equation}
    where $\mathsf{Cat}(n) = \frac{1}{n + 1} \binom{2n}{n}$ are the Catalan numbers.
\end{theorem}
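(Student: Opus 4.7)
The plan is to use the Gram matrix and Neumann series approach. Setting $H \in \CC^{S_k \times S_k}$ with $H(\sigma, \tau) \colonequals N^{|\cyc(\sigma\tau^{-1})| - k} = N^{-|\sigma\tau^{-1}|}$, Schur--Weyl duality identifies $W_N(\sigma\tau^{-1}) = N^{-k}(H^{-1})(\sigma, \tau)$ as the entries of the inverse Gram matrix for the natural pairing on tensor space. Since $|\sigma\tau^{-1}| = 0$ iff $\sigma = \tau$, one writes $H = I + R$ with $R$ supported on pairs of nonzero Cayley distance and all entries of magnitude at most $N^{-1}$. I would then obtain $W_N$ from the Neumann series $H^{-1} = \sum_{j \geq 0}(-R)^j$ and verify absolute convergence in the stated regime $N > \sqrt{6}k^{7/4}$.

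Expanded at a single permutation $\sigma$, using the translation invariance that makes $H$ a convolution operator on $\CC[S_k]$, the series becomes
\begin{equation*}
W_N(\sigma) = N^{-k}\sum_{j \geq 0}(-1)^j \sum_{\substack{\tau_1,\dots,\tau_j \in S_k \setminus \{e\} \\ \tau_1 \cdots \tau_j = \sigma}} N^{-(|\tau_1| + \cdots + |\tau_j|)}.
\end{equation*}
By the triangle inequality for Cayley distance, $|\tau_1|+\cdots+|\tau_j| \geq |\sigma|$ with equality precisely for geodesic factorizations; moreover, a sign-parity argument using $\sgn(\tau_i) = (-1)^{|\tau_i|}$ forces the excess $|\tau_1|+\cdots+|\tau_j| - |\sigma|$ to be even. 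I would isolate the leading contribution from geodesic factorizations of $\sigma$ into transpositions (all $\tau_i$ transpositions, $j = |\sigma|$). A classical result of Biane, via the \Mobius\ function of the non-crossing partition lattice, identifies these with non-crossing factorizations: the signed count equals $(-1)^{|\sigma|}\prod_{C \in \cyc(\sigma)}\mathsf{Cat}(|C|-1)$, yielding the leading magnitude $N^{-k-|\sigma|}\prod_C \mathsf{Cat}(|C|-1)$.

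For the remainder, I would bound, for each $m \geq 1$, the number of factorizations of $\sigma$ into any word in $S_k \setminus \{e\}$ of total Cayley length $|\sigma|+2m$ by $\prod_{C \in \cyc(\sigma)}\mathsf{Cat}(|C|-1) \cdot (6k^{7/2})^m$. Morally, $\prod_C \mathsf{Cat}(|C|-1)$ is the count of a non-crossing ``backbone,'' and each of the $m$ excess handles can be inserted in at most $6k^{7/2}$ ways --- a polynomial factor in $k$ coming from choosing where along the backbone to insert a pair of extra non-identity factors, and which elements of $[k]$ they disturb. Summing the geometric series gives
\begin{equation*}
|W_N(\sigma)| \leq N^{-k-|\sigma|}\prod_{C \in \cyc(\sigma)} \mathsf{Cat}(|C|-1) \sum_{m \geq 0}\left(\frac{6k^{7/2}}{N^2}\right)^m = \frac{N^{-k-|\sigma|}\prod_C \mathsf{Cat}(|C|-1)}{1 - 6k^{7/2}/N^2},
\end{equation*}
absolutely convergent for $N > \sqrt{6}k^{7/4}$. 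The second stated inequality is then immediate from $\mathsf{Cat}(n) \leq 4^n$ together with $\sum_{C \in \cyc(\sigma)}(|C|-1) = k - |\cyc(\sigma)| = |\sigma|$.

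The main obstacle is the uniform combinatorial bound controlling the number of excess-length-$2m$ factorizations of a given $\sigma$. Establishing this requires a careful graphical/topological encoding of a factorization --- viewing the word as a gluing of polygons whose excess Cayley length corresponds to genus or handle additions on top of a planar backbone --- and showing that each extra handle costs at most $6k^{7/2}$, with the $k^{7/2}$ arising from the specific enumeration of insertion positions, supports, and pairings. All other ingredients (identifying the leading geodesic term with non-crossing factorizations, checking parity of the excess, and summing the resulting geometric series) are classical or routine.
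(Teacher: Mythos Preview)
The paper does not prove this theorem; it is quoted verbatim from \cite{CM-2017-WeingartenCalculusBounds}. The only justification the paper supplies is the single sentence after the statement deriving the second inequality from the first via $\mathsf{Cat}(n) \leq 4^n$ and $\sum_{C \in \cyc(\sigma)}(|C|-1) = k - |\cyc(\sigma)| = |\sigma|$, which you reproduce correctly at the end of your proposal.

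There is therefore no proof in the paper to compare your argument against. For what it is worth, your sketch of the first inequality follows the broad architecture of the Collins--Matsumoto argument: invert the Gram matrix $H$ on $\CC[S_k]$ by a Neumann series, identify the leading $N^{-k-|\sigma|}$ coefficient with a signed count of geodesic factorizations giving the Catalan product, and control the excess-length contributions by a geometric series in $6k^{7/2}/N^2$. You are right that the genuinely hard step is the uniform bound on the number of factorizations with excess Cayley length $2m$. One minor imprecision: the leading coefficient is not simply the count of minimal \emph{transposition} factorizations (that number is $\prod_C |C|^{|C|-2}$ by Hurwitz, not the Catalan product); rather it is the alternating sum $\sum_j (-1)^j$ over \emph{all} geodesic factorizations into non-identity elements with $\sum_i |\tau_i| = |\sigma|$, and it is this alternating sum that collapses, via the M\"{o}bius function of the non-crossing partition lattice, to $(-1)^{|\sigma|}\prod_C \mathsf{Cat}(|C|-1)$.
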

\noindent
The second estimate above follows from the inequality $\mathsf{Cat}(k) \leq 4^k$ together with \eqref{eq:transp-distance}.

\begin{definition}[Tuple action of $S_k$ and compatibility]
    For $\sigma \in S_k$ and $\bm i \in [N]^k$ for some $N \geq 1$, we write $\sigma \bm i = (i_{\sigma(1)}, \dots, i_{\sigma(k)})$.
    We say that $\sigma$ and $\bm i$ are \emph{compatible} if $\sigma \bm i = \bm i$.
\end{definition}

The following quantity will arise often in the calculations to come, so we give a separate notation for it:
\begin{equation}
    w(\alpha) \colonequals \sqrt{\alpha(1 - \alpha)} \in (0, 1).
\end{equation}
Recall that this is the standard deviation of a random variable with law $\Ber(\alpha)$.

We now prove some preliminary bounds on quantities that will appear in our proof of Theorem~\ref{thm:edge-km-unitary}.
\begin{lemma}
    \label{lem:f-sigma-bound}
    Let $a_1, \dots, a_N \sim \Ber(\alpha)$ be i.i.d., and write $\what{a}_i \colonequals (a_i - \alpha) / w(\alpha)$.
    Suppose that $N \geq 16 k^4 / w(\alpha)^2$.
    Write
    \begin{equation}
        f_{\alpha}(\sigma) \colonequals \left| \sum_{\substack{\bm i \in [N]^k \\ \bm i \text{ compatible with } \sigma}} \EE[\what{a}_{i_1} \cdots \what{a}_{i_k}] \right|.
    \end{equation}
    Then, for any $\sigma \in S_k$,
    \begin{equation}
      f_{\alpha}(\sigma) \lesssim k\exp(k^{3/4}) \, w(\alpha)^{-\sum_{a = 3}^k (a - 2)|\cyc_a(\sigma)|} \, (|\cyc_1(\sigma)|)!! \, N^{|\cyc_1(\sigma)| / 2 + |\cyc_{\geq 2}(\sigma)|}.
    \end{equation}
\end{lemma}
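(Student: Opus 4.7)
The plan is to unpack the compatibility condition and reduce the estimate to a bound on a sum over partitions of $\cyc(\sigma)$. A tuple $\bm i \in [N]^k$ is compatible with $\sigma$ exactly when $\bm i$ is constant on each cycle of $\sigma$, so such tuples are in bijection with maps $\phi : \cyc(\sigma) \to [N]$ via $i_j = \phi(C_j)$, where $C_j$ is the cycle containing $j$. For each $\phi$, grouping cycles by common $\phi$-value yields a partition $\pi = \pi(\phi) \in \Part(\cyc(\sigma))$, and by independence of the $a_v$ across distinct values $v$ the summand factors as
\[
\EE[\what{a}_{i_1} \cdots \what{a}_{i_k}] \;=\; \prod_{B \in \pi} \EE[\what{a}^{s(B)}], \qquad s(B) \colonequals \sum_{C \in B} |C|.
\]
Since $\EE[\what{a}^1] = 0$, blocks with $s(B) = 1$ kill the term; for $s \geq 2$, the bound $|a - \alpha| \leq 1$ yields $|\EE[\what{a}^s]| \leq \EE[(a-\alpha)^2] / w(\alpha)^s = w(\alpha)^{2-s}$. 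At most $N^{|\pi|}$ maps $\phi$ induce any given $\pi$, so, writing $P(\sigma)$ for the set of partitions of $\cyc(\sigma)$ with every $s(B) \geq 2$,
\[
f_\alpha(\sigma) \;\leq\; \sum_{\pi \in P(\sigma)} N^{|\pi|}\, w(\alpha)^{2|\pi| - k}.
\]

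Next I would identify the maximum block count $M \colonequals \max_{\pi \in P(\sigma)} |\pi|$. Each block of a $\pi \in P(\sigma)$ contains either a cycle of size $\geq 2$ (in which case $s(B) \geq 2$ is automatic) or at least two $1$-cycles of $\sigma$; consequently $M \leq |\cyc_1(\sigma)|/2 + |\cyc_{\geq 2}(\sigma)|$, with this bound saturated (up to a parity correction when $|\cyc_1(\sigma)|$ is odd, which absorbs one stray $1$-cycle into a $\geq 2$-cycle block or into a triple of $1$-cycles) by making each $\geq 2$-cycle a singleton block and pairing up the $1$-cycles. A direct count shows that the number of such maximum partitions is at most $k \cdot (|\cyc_1(\sigma)|)!!$: the double factorial overcounts perfect matchings on the $1$-cycles, and the factor of $k$ absorbs the extra choice in the odd-parity case (which $\geq 2$-cycle, or which triple, to use).

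The last step is to show that the maximum-$|\pi|$ contribution dominates the sum. The hypothesis $N \geq 16k^4/w(\alpha)^2$ gives $Nw(\alpha)^2 \geq 16k^4$. Every $\pi' \in P(\sigma)$ of defect $d \colonequals M - |\pi'|$ can be produced from some maximum $\pi \in P(\sigma)$ by $d$ successive block-merges, each offering at most $\binom{|\pi|}{2} \leq k^2$ choices, yielding at most $k^{2d}/d!$ descendants per maximum parent. Hence
\[
\sum_{\pi \in P(\sigma)} (Nw(\alpha)^2)^{|\pi|} \;\leq\; k\,(|\cyc_1(\sigma)|)!! \cdot (Nw(\alpha)^2)^{M} \sum_{d \geq 0} \frac{(k^2 / Nw(\alpha)^2)^d}{d!} \;\lesssim\; k\,(|\cyc_1(\sigma)|)!! \cdot (Nw(\alpha)^2)^{M},
\]
and multiplying through by $w(\alpha)^{-k}$ and using $2M - k = -(k - |\cyc_1(\sigma)| - 2|\cyc_{\geq 2}(\sigma)|) = -\sum_{a \geq 3}(a-2)|\cyc_a(\sigma)|$ yields the claimed bound. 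The factor $\exp(k^{3/4})$ in the statement provides ample slack for the parity correction, the absorbed multiplicative constants, and loose counting of maximum partitions.

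The main obstacle is the combinatorial bookkeeping in the last paragraph: verifying that every non-maximum $\pi' \in P(\sigma)$ really is reachable from some maximum parent by successive merges (so that $k^{2d}/d!$ honestly bounds partitions by defect $d$), and handling the edge cases $|\cyc_{\geq 2}(\sigma)| = 0$ and $|\cyc_1(\sigma)|$ odd uniformly so that the clean formula $M = c_1/2 + c_{\geq 2}$ continues to dominate the $N$- and $w$-exponents via $Nw(\alpha)^2 \geq 1$.
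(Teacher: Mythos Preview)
Your reduction to the partition sum
\[
f_\alpha(\sigma) \;\leq\; w(\alpha)^{-k} \sum_{\pi \in P(\sigma)} (N w(\alpha)^2)^{|\pi|}
\]
is correct and matches the paper exactly. The gap is in your last paragraph, and it is not mere bookkeeping: the claim that every $\pi' \in P(\sigma)$ of defect $d$ is obtained from some maximum $\pi \in P(\sigma)$ by $d$ block-merges is false. Take $\sigma$ with two $1$-cycles $a,b$ and two $2$-cycles $C,D$. Then $M=3$, and the \emph{only} maximum partition in $P(\sigma)$ is $\{\{a,b\},\{C\},\{D\}\}$, since any other $3$-block partition of $\{a,b,C,D\}$ leaves $a$ or $b$ as a singleton with $s=1$. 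But $\pi'=\{\{a,C\},\{b,D\}\}$ lies in $P(\sigma)$, has defect $1$, and is not a merge of two blocks of the maximum (the three possible merges are $\{\{a,b,C\},\{D\}\}$, $\{\{a,b,D\},\{C\}\}$, $\{\{a,b\},\{C,D\}\}$). So your $k^{2d}/d!$ count does not enumerate all of $P(\sigma)$, and the geometric-series bound that follows is unjustified. The same obstruction shows $\pi'$ cannot be refined within $P(\sigma)$ to the maximum either, so the lattice picture underlying your argument breaks down in both directions.

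The paper bounds the partition sum by a different decomposition. Writing $\ell=|\cyc_1(\sigma)|$ and $m=|\cyc(\sigma)|$, it views $P(\sigma)$ as the set of partitions of $[m]$ in which none of $1,\dots,\ell$ is a singleton, and separates each such partition according to which blocks lie entirely inside $\{1,\dots,\ell\}$ versus which meet $\{\ell+1,\dots,m\}$. This factorizes the sum into (i) a sum over partitions of a subset of the $1$-cycles into blocks of size $\geq 2$, controlled by an associated-Stirling-number bound of the shape $\exp(\ell^{3/4})\,\ell!!\,(w^2N)^{\ell/2}$, times (ii) an unconstrained partition sum over $[m-\ell]$, controlled by $\exp((m-\ell)^2/(w^2N))\,(w^2N)^{m-\ell}$. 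No merge-from-maximum counting is used. If you want to salvage your route, you would need a direct upper bound on $\#\{\pi\in P(\sigma):|\pi|=M-d\}$ that does not rely on reachability from a maximum; doing this carefully essentially reproduces the Stirling-number estimates the paper invokes.
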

\begin{proof}
    First, we bound the moments of the $\what{a}_i$.
    We have $\EE \, \what{a}_i = 0$, and for all $\ell \geq 2$,
    \begin{align*}
      \EE \, \what{a}_i^{\ell}
      &= \alpha \left(\frac{1 - \alpha}{\sqrt{\alpha(1 - \alpha)}}\right)^{\ell} + (1 - \alpha)\left(\frac{-\alpha}{\sqrt{\alpha(1 - \alpha)}}\right)^{\ell} \\
      &\leq \left(\frac{1}{\sqrt{\alpha(1 - \alpha)}}\right)^{\ell - 2}\left((1 - \alpha)^{\ell - 1} + \alpha^{\ell - 1}\right) \\
      &\leq \left(\frac{1}{\sqrt{\alpha(1 - \alpha)}}\right)^{\ell - 2} \\
      &= w^{-\ell + 2}. \numberthis
    \end{align*}

    Note that $\bm i$ is compatible with $\sigma$ if and only if it is constant on the cycles of $\sigma$.
    Let $\pi$ be the partition of $[k]$ formed by the cycles of $\sigma$.
    Then, we may rewrite in terms of the partition $\rho$ of $[k]$ on which $\bm i$ is constant as
    \begin{align*}
      f_{\alpha}(\sigma)
      &\leq \sum_{\substack{\rho \in \Part([k]) \\ \rho \geq \pi}} N^{|\rho|} \prod_{S \in \rho} |\EE[\what{a}_1^{|S|}]| \\
      &\leq \sum_{\substack{\rho \in \Part([k]) \\ \rho \geq \pi \\ |S| \geq 2 \text{ for all } S \in \rho}} N^{|\rho|} \prod_{S \in \rho} w^{-|S| + 2} \\
      &= w^{-k} \sum_{\substack{\rho \in \Part([k]) \\ \rho \geq \pi \\ |S| \geq 2 \text{ for all } S \in \rho}} (w^2 N)^{|\rho|}
      \intertext{Let us rewrite this one more time: let $\ell$ be the number of fixed points of $\sigma$, and $m$ be the total number of cycles. Then, we have:}
      &= w^{-k} \sum_{\substack{\rho \in \Part([m]) \\ 1, \dots, \ell \text{ not singletons in } \rho}} (w^2 N)^{|\rho|}
      \intertext{Now, grouping those parts of $\rho$ consisting of only elements among $1, \dots, \ell$ and all the others, we have}
      &= w^{-k} \sum_{a = 0}^{\ell} \sum_{\rho \in \Part([m - \ell])} \sum_{\substack{\pi \in \Part([a]) \\ |S| \geq 2 \text{ for all } S \in \pi}} |\rho|^{\ell - a} (w^2N)^{|\pi| + |\rho|} \\
      &= w^{-k} \sum_{a = 0}^{\ell} \sum_{\rho \in \Part([m - \ell])} |\rho|^{\ell - a} (w^2N)^{|\rho|}\sum_{\substack{\pi \in \Part([a]) \\ |S| \geq 2 \text{ for all } S \in \pi}} (w^2 N)^{|\pi|}
      \intertext{Using Proposition~\ref{prop:S1-sum} on the inner sum and bounding $|\rho|^{\ell - a} \leq (m - \ell)^{\ell - a}$ on the outer sum,}
      &\leq w^{-k} \sum_{a = 0}^{\ell} \exp(a^{3/4}) \cdot a !! (w^2 N)^{a/2} (m - \ell)^{\ell - a} \sum_{\rho \in \Part([m - \ell])} (w^2N)^{|\rho|}
        \intertext{Using Proposition~\ref{prop:S-sum} on the remaining sum and using $m - \ell \leq m \leq k$,}
      &\leq \exp\left(\frac{k^2}{w^2 N} + k^{3/4}\right) w^{-k} \sum_{a = 0}^{\ell} a !! \, (w^2 N)^{m - \ell + a/2} (m - \ell)^{\ell - a}
        \intertext{Since $m - \ell \leq m \leq k \leq \sqrt{w^2 N}$, the largest summand is that with $a = \ell$, so we can bound}
      &\leq \exp\left(\frac{k^2}{w^2 N} + k^{3/4}\right) (\ell + 1) w^{-k} \ell !! \, (w^2N)^{m - \ell / 2}, \numberthis
    \end{align*}
    completing the proof.
\end{proof}

\begin{lemma}
    \label{lem:cycle-sum}
    For any $\sigma \in S_k$ and $x > 2k^2$,
    \begin{equation}
        \sum_{\rho \in S_k \text{ a } k\text{-cycle}} x^{-|\sigma \rho^{-1}|} \leq 2\frac{(k - 1)!}{(k - |\cyc(\sigma)| + 1)!} k^{|\cyc(\sigma)|} x^{-|\cyc(\sigma)| + 1}.
    \end{equation}
\end{lemma}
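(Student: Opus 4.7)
Let $c := |\cyc(\sigma)|$, so $|\sigma| = k - c$. My plan is to relax the sum to one over all permutations of large enough Cayley weight, bound the resulting sum by elementary symmetric function estimates, and then close it off by a geometric series. By the triangle inequality for the Cayley/transposition metric on $S_k$, since $|\rho| = k-1$ for every $k$-cycle, we have $|\sigma\rho^{-1}| \geq ||\rho| - |\sigma|| = c-1$. Substituting $\pi = \sigma\rho^{-1}$ (a bijection from $k$-cycles into $S_k$) and dropping the constraint that $\pi^{-1}\sigma$ be a $k$-cycle overestimates the sum by
\[
\sum_{\rho\ k\text{-cycle}} x^{-|\sigma\rho^{-1}|} \;\leq\; \sum_{d \geq c-1} \stirling{k}{k-d}\, x^{-d},
\]
where $\stirling{k}{k-d}$ is the unsigned Stirling number of the first kind, counting permutations in $S_k$ of Cayley weight exactly $d$.

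From the classical generating-function identity $\sum_d \stirling{k}{k-d}\, y^d = \prod_{i=1}^{k-1}(1+iy)$, I identify $\stirling{k}{k-d}$ as the elementary symmetric polynomial $e_d(1,2,\ldots,k-1)$. The standard inequality $e_d(x_1,\ldots,x_n) \leq (x_1+\cdots+x_n)^d/d!$ (immediate from $(\sum_i x_i)^d = d!\,e_d + (\text{nonneg.\ terms})$) then gives $\stirling{k}{k-d} \leq \binom{k}{2}^d/d!$. Since the hypothesis $x > 2k^2$ ensures $a := \binom{k}{2}/x < 1/4$, the tail of the resulting exponential-type series is bounded, using $(c-1)!/(c-1+j)! \leq c^{-j}$, by
\[
\sum_{d \geq c-1} \frac{a^d}{d!} \;\leq\; \frac{a^{c-1}}{(c-1)!}\cdot\frac{1}{1 - a/c} \;\leq\; \frac{4}{3}\cdot\frac{a^{c-1}}{(c-1)!},
\]
where the last step uses $a/c \leq a \leq 1/4$.

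The proof then reduces to verifying the algebraic inequality
\[
\frac{\binom{k}{2}^{c-1}}{(c-1)!} \;\leq\; \frac{3}{2}\cdot\frac{(k-1)!}{(k-c+1)!}\,k^c,
\]
which combined with the factor $4/3$ from the tail estimate produces exactly the claimed constant $2 = \frac{4}{3}\cdot\frac{3}{2}$. I expect this last step to be the main obstacle: it is not obvious by inspection and appears to require case analysis. After cross-multiplying and dividing by $k^{c-1}$, it becomes $(k-1)^{c-1} \leq 3\cdot 2^{c-2}\, k\, (c-1)!\,(k-1)(k-2)\cdots(k-c+2)$. For $c \leq k/2 + 1$ the bound $(k-1)(k-2)\cdots(k-c+2) \geq (k/2)^{c-2}$ reduces it to an easy factorial comparison, while the extremal regime $c = k$ reduces to $(k-1)^{k-1} \leq 3\cdot 2^{k-2}((k-1)!)^2\,k$, which holds by Stirling's approximation with substantial margin, since the ratio of the two sides decays like $(e^2/(2k))^{k-1}$; intermediate values of $c$ can be handled by interpolating between these two arguments.
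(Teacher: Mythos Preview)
Your argument is correct and takes a genuinely different route from the paper's proof. The paper keeps the $k$-cycle constraint throughout: it shows that every $k$-cycle $\rho$ at transposition distance $d$ from $\sigma$ lies within distance $d - d_{\min}$ of some $k$-cycle $\rho_0$ at the minimal distance $d_{\min} = |\cyc(\sigma)| - 1$, via a ``splicing'' argument that rewrites a product of transpositions so that the first $d_{\min}$ of them already merge all cycles of $\sigma$. This gives $\#\{\rho\ k\text{-cycle}: |\sigma\rho^{-1}| = d\} \leq \#\{\rho_0\ k\text{-cycle}: |\sigma\rho_0^{-1}| = d_{\min}\}\cdot (k^2)^{d - d_{\min}}$, after which the geometric series and an explicit count of $k$-cycles at minimal distance finish the proof. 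Your approach simply discards the $k$-cycle constraint, bounding $\#\{\rho\ k\text{-cycle}: |\sigma\rho^{-1}| = d\}$ by the total number of permutations of Cayley weight $d$, i.e.\ the unsigned Stirling number of the first kind, and then uses the elementary symmetric-polynomial bound $e_d(1,\dots,k-1) \leq \binom{k}{2}^d/d!$. This is more elementary---no need for the splicing lemma---at the cost of throwing away the $k$-cycle structure; you then recover the stated bound via an algebraic comparison.

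Two small remarks. First, your notation $\stirling{k}{k-d}$ uses the curly-brace macro the paper reserves for Stirling numbers of the \emph{second} kind; you mean the first kind (cycle counts), so in the paper's conventions you should write $s(k,k-d)$ or simply $e_d(1,\dots,k-1)$. Second, the final algebraic inequality does not need the case split you sketch. Writing $c = |\cyc(\sigma)|$ and setting
\[
f(c) \;:=\; \frac{3\cdot 2^{c-2}\,(c-1)!\,k\,\dfrac{(k-1)!}{(k-c+1)!}}{(k-1)^{c-1}},
\]
one checks $f(1) = 3/2$ and $f(c+1)/f(c) = 2c(k-c+1)/(k-1)$. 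The numerator $2c(k-c+1)$ is a downward parabola in $c$ with endpoint values $2k$ (at $c=1$) and $2k$ (at $c=k$), hence $f(c+1)/f(c) \geq 2k/(k-1) > 1$ for all $1 \leq c \leq k-1$. Thus $f$ is increasing and $f(c) \geq 3/2$ for every $c$, which is exactly your target inequality. This replaces the interpolation argument with a two-line monotonicity check.
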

\begin{proof}
    Since composing with a transposition can only change the number of cycles of a permutation by one, for any $\rho$, we have $|\sigma\rho^{-1}| \geq d_{\min} \colonequals |\cyc(\sigma)| - 1$.
    On the other hand, for any $\rho$ (even one that is not a $k$-cycle), we always have $|\sigma \rho^{-1}| \leq k - 1$.
    So, we may write
    \begin{equation}
        \sum_{\rho \in S_k \text{ a } k\text{-cycle}} x^{-|\sigma \rho^{-1}|} = \sum_{d = d_{\min}}^{k - 1} x^{-d} \cdot \#\{\rho \in S_k \text{ a } k\text{-cycle}: |\sigma\rho^{-1}| = d\}.
    \end{equation}

    Let $\rho$ have $|\sigma \rho^{-1}| = |\rho \sigma^{-1}| = d$.
    Then, there must be transpositions $\tau_1^{(0)}, \dots, \tau_d^{(0)}$ such that $\rho = \tau_d^{(0)} \cdots \tau_1^{(0)} \sigma$.
    Let $i$ be the smallest index such that $\tau_i^{(0)}$ transposes two elements in different cycles of $\sigma$.
    Using that transpositions on disjoint pairs of indices commute and the identity $(i \, j)(j \, k) = (i \, j \, k) = (k \, i) (i \, j)$, we may instead write $\rho = \tau_d^{(1)} \cdots \tau_1^{(1)}\sigma$, where $\tau_1^{(1)} = \tau_i^{(0)}$.
    Repeating this procedure on the leftmost sequence of $d - 1$ transpositions in $\tau_d^{(1)} \cdots \tau_2^{(1)}(\tau_1^{(1)} \sigma)$, and performing this procedure a total of $d_{\min}$ times, we reach an expression $\rho = \tau_d^{(d_{\min})} \cdots \tau_{d_{\min} + 1}^{(d_{\min})} (\tau_{d_{\min}}^{(d_{\min})} \cdots \tau_{1}^{(d_{\min})} \sigma)$, where $\rho_0 \colonequals \tau_{d_{\min}}^{(d_{\min})} \cdots \tau_{1}^{(d_{\min})} \sigma$ is a $k$-cycle with $|\sigma \rho_0^{-1}| = d_{\min}$.
    Moreover, we have $|\rho \rho_0^{-1}| \leq d - d_{\min}$ by the above expression, and indeed we must have $|\rho \rho_0^{-1}| = d - d_{\min}$, since otherwise the triangle inequality for transposition distance would be violated.

    We have shown that, for any $k$-cycle $\rho$ at transposition distance $d$ from $\sigma$, there exists a $k$-cycle $\rho_0$ at transposition distance $d_{\min}$ from $\sigma$ that is at transposition distance $d - d_{\min}$ from $\rho$.
    Therefore, we may bound the growth of the ``entropy terms'' above by
    \begin{align*}
      &\hspace{-1cm} \#\{\rho \in S_k \text{ a } k\text{-cycle}: |\sigma\rho^{-1}| = d\} \\
      &\leq \#\{\rho \in S_k \text{ a } k\text{-cycle}: |\sigma\rho^{-1}| = d_{\min}\} \cdot \#\{\text{transpositions in } S_k\}^{d - d_{\min}} \\
      &\leq \#\{\rho \in S_k \text{ a } k\text{-cycle}: |\sigma\rho^{-1}| = d_{\min}\} \cdot (k^2)^{d - d_{\min}}.\numberthis
    \end{align*}
    So,
    \begin{align*}
      \sum_{\rho \in S_k \text{ a } k\text{-cycle}} x^{-|\sigma \rho^{-1}|}
      &\leq \#\{\rho \in S_k \text{ a } k\text{-cycle}: |\sigma\rho^{-1}| = d_{\min}\} \, x^{-d_{\min}} \sum_{d = d_{\min}}^{k - 1} \left(\frac{k^2}{x}\right)^{d - d_{\min}} \\
      &\leq \frac{1}{1 - \frac{k^2}{x}} \#\{\rho \in S_k \text{ a } k\text{-cycle}: |\sigma\rho^{-1}| = d_{\min}\} \, x^{-d_{\min}} \\
      &\leq 2\#\{\rho \in S_k \text{ a } k\text{-cycle}: |\sigma\rho^{-1}| = d_{\min}\} \, x^{-d_{\min}}. \numberthis
    \end{align*}

    It remains to bound the number of $k$-cycles at transposition distance $d_{\min}$ to $\sigma$.
    Such cycles are formed by repeatedly ``splicing'' one cycle of $\sigma$ into another.
    Suppose $\sigma$ has $m$ cycles.
    Then, the $k$-cycles at transposition dinstace $d_{\min}$ to $\sigma$ are enumerated (with some repetitions) by an ordering $C_1, \dots, C_m$ of the cycles of $\sigma$, a choice of one index in each of $[|C_2|], \dots, [|C_m|]$, and a choice of one index in each of $[|C_1|], [|C_1| + |C_2|], \dots, [|C_1| + \cdots + |C_{m - 1}|]$.
    Thus, we may bound
    \begin{align*}
      \#\{\rho \in S_k \text{ a } k\text{-cycle}: |\sigma\rho^{-1}| = d_{\min}\}
      &\leq m! \cdot \prod_{C \in \cyc(\sigma)} |C| \cdot \frac{(k - 1)!}{(k - m + 1)!} \\
      &\leq m^m \left(\frac{k}{m}\right)^m \frac{(k - 1)!}{(k - m + 1)!} \\
      &= \frac{(k - 1)!}{(k - m + 1)!} k^m, \numberthis
    \end{align*}
    as claimed.
\end{proof}

The following is the main technical result from which Theorem~\ref{thm:edge-km-unitary} will follows.
We isolate this result to point out that we do not require $\beta = \frac{1}{2}$ for this part of the argument; that restriction comes only from limitations of the proof of Theorem~\ref{thm:edge-km}, whose main assumption this Lemma verifies.
\begin{lemma}
    \label{lem:invariant-max-eval-bd}
    Suppose $k \geq 1 / (16 \min\{w(\alpha), w(\beta)\}^4)$ and $N \geq 2k^4 / \min\{w(\alpha), w(\beta)\}^2$.
    Let $\bU \in \sU(N)$ be a Haar-distributed unitary matrix.
    Let $\bA, \bD \in \{0, 1\}^{N \times N}$ be random diagonal matrices with $A_{ii} \sim \Ber(\alpha)$ and $D_{ii} \sim \Ber(\beta)$ independently of each other and of $\bU$.
    Let $\bB \colonequals \bU\bB\bU^*$.
    Then,
    \begin{equation}
        \left|\EE \Tr\left(\frac{\bA - \alpha \bm I_N}{\sqrt{\alpha(1 - \alpha)}}\frac{\bB - \beta \bm I_N}{\sqrt{\beta(1 - \beta)}}\right)^k\right| \lesssim \exp(4k^{3/4}).
    \end{equation}
\end{lemma}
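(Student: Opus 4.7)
The plan is to expand the trace entry-by-entry and integrate out the Haar-distributed $\bU$ via the Weingarten formula. Setting $\what{\bA} \colonequals (\bA - \alpha \bm I_N)/w(\alpha)$, $\what{\bD} \colonequals (\bD - \beta \bm I_N)/w(\beta)$, and noting that $\what{\bB} = \bU\what{\bD}\bU^*$, I would first expand
\[
\Tr\bigl(\what{\bA}\bU\what{\bD}\bU^*\bigr)^k = \sum_{\bm i,\bm j \in [N]^k} \prod_{t=1}^k \what{a}_{i_t}\what{d}_{j_t}\, U_{i_t j_t}\,\overline{U_{i_{t+1}j_t}}
\]
(with $i_{k+1}\equiv i_1$). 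Taking expectation using the mutual independence of $\bA, \bD, \bU$ and applying Theorem~\ref{thm:weingarten} to the $\bU$-integral, the $\bar U$-first-indices form the cyclic shift $\gamma\bm i$ with $\gamma \colonequals (1\,2\,\cdots\,k)\in S_k$ while the $\bar U$-second-indices equal $\bm j$. Reparametrizing the Weingarten permutation to absorb $\gamma$ yields
\[
\bigl|\EE \Tr(\what{\bA}\what{\bB})^k\bigr| \leq \sum_{\sigma,\tau \in S_k} f_\alpha(\sigma)\,f_\beta(\tau)\,\bigl|W_N(\gamma\sigma\tau^{-1})\bigr|,
\]
where $f_\alpha,f_\beta$ are the index-sum quantities bounded in Lemma~\ref{lem:f-sigma-bound}.

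Next I would substitute the bounds from Lemma~\ref{lem:f-sigma-bound} and Theorem~\ref{thm:weingarten-bound}. The crucial observation is the dimensional accounting: writing $L_\sigma \colonequals \sum_{a \geq 3}(a-2)|\cyc_a(\sigma)|$, one checks that $|\cyc_1(\sigma)|/2 + |\cyc_{\geq 2}(\sigma)| = (k - L_\sigma)/2$, so the combined power of $N$ in the product of the three bounds is exactly $-(L_\sigma + L_\tau)/2 - |\gamma\sigma\tau^{-1}|$. This is precisely the right amount of decay: $(w(\alpha)^2 N)^{-L_\sigma/2}$ cancels the long-cycle penalty $w(\alpha)^{-L_\sigma}$ from $f_\alpha(\sigma)$ under the hypothesis $N \gtrsim k^4/w(\alpha)^2$, and symmetrically for $\tau$; and the Catalan factors in the sharp Weingarten bound combine, via $\mathsf{Cat}(|C|-1) \leq 4^{|C|-1}$, into $4^{|\gamma\sigma\tau^{-1}|}$. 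The estimate reduces to controlling
\[
\exp(2k^{3/4}) \sum_{\sigma,\tau\in S_k} |\cyc_1(\sigma)|!!\,|\cyc_1(\tau)|!!\,(w(\alpha)^2 N)^{-L_\sigma/2}(w(\beta)^2 N)^{-L_\tau/2}(4/N)^{|\gamma\sigma\tau^{-1}|},
\]
up to mild polynomial-in-$k$ prefactors that are absorbed into the final $\exp(4k^{3/4})$.

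Finally I would estimate this combinatorial sum by organizing over the cycle types of $\sigma$ and $\tau$ and the Cayley weight $d \colonequals |\gamma\sigma\tau^{-1}|$. The count of $(\sigma, \tau)$ at fixed cycle types is controlled by standard Stirling-type bounds; for the $d$-sum, I would fix $\tau$ and change variable to $\rho \colonequals \sigma\tau^{-1}$, so that summing over $\sigma$ becomes summing $(4/N)^{|\gamma\rho|}$ over $\rho \in S_k$. Lemma~\ref{lem:cycle-sum}, applied to each $k$-cycle to which the cycles of $\rho$ ``refine'', then produces a geometric series in $d$ with ratio $\lesssim k^2/N$, which under the hypothesis $N \geq 2k^4/w^2$ is at most $1/k^2$. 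The series converges rapidly, every factorial-scale loss from permutation counts is beaten by the Weingarten decay, and what survives is the $\exp(2k^{3/4})$ prefactor from Lemma~\ref{lem:f-sigma-bound} plus an $\exp(O(k^{3/4}))$ entropy slack from subdominant cycle types, for the claimed bound $\exp(4k^{3/4})$.

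The main obstacle is this last combinatorial step: carefully matching the Stirling-type count of permutations at a given cycle type against the $(w^2 N)^{-L/2}$ and $(4/N)^d$ suppression, while keeping the double-factorial factors $|\cyc_1(\sigma)|!!$ (which can reach $k^{k/2}$ at the identity) under control. The delicate balance is achieved precisely at the non-crossing pair partitions saturating the genus inequality $|\cyc(\sigma)|+|\cyc(\tau)|+|\cyc(\gamma\sigma\tau^{-1})| \leq 2k$, with all other configurations penalized by a factor of at least $w^2 N/k^4 \gtrsim 1$ per excess transposition distance.
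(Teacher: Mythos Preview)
Your setup through the Weingarten expansion is correct and matches the paper exactly, including the reduction to $\sum_{\sigma,\tau} f_\alpha(\sigma)f_\beta(\tau)|W_N(\gamma\sigma\tau^{-1})|$ and the dimensional accounting $|\cyc_1(\sigma)|/2 + |\cyc_{\geq 2}(\sigma)| = (k - L_\sigma)/2$.

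The gap is in your handling of the final combinatorial sum. Your proposed change of variable $\rho = \sigma\tau^{-1}$ with $\tau$ fixed does not help: the factor $f_\alpha(\sigma) = f_\alpha(\rho\tau)$ still couples $\rho$ and $\tau$, so you cannot sum $(4/N)^{|\gamma\rho|}$ over $\rho$ independently. Moreover, Lemma~\ref{lem:cycle-sum} sums over \emph{$k$-cycles}, not over all of $S_k$, so your phrase ``applied to each $k$-cycle to which the cycles of $\rho$ refine'' does not correspond to a valid invocation of that lemma.

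The paper supplies two ideas you are missing. First, an \emph{averaging trick}: since $f_\alpha$, $f_\beta$, and $W_N$ are all class functions on $S_k$, the double sum is unchanged if the fixed $k$-cycle $\gamma$ is replaced by any other $k$-cycle; hence one may average over all $(k-1)!$ of them. This promotes $\gamma$ to a genuine summation variable ranging over $k$-cycles, exactly the hypothesis of Lemma~\ref{lem:cycle-sum}, which then yields a bound in terms of $|\cyc(\sigma\tau^{-1})|$ rather than $|\gamma\sigma\tau^{-1}|$. Second, a \emph{factorization inequality}: from the triangle inequality for Cayley weight one has
\[
|\cyc(\sigma\tau^{-1})| \geq |\cyc(\sigma)| + |\cyc(\tau)| - k,
\]
and since $4k^2/N < 1$ this lets one bound $(4k^2/N)^{-|\cyc(\sigma\tau^{-1})|+1}$ by the product $(4k^2/N)^{-|\cyc(\sigma)|-|\cyc(\tau)|+k+1}$. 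The double sum then splits as a product of two single sums $\sum_{\sigma}(4k^2/N)^{|\cyc(\sigma)|}f_\alpha(\sigma)$ and the analogous sum over $\tau$, each of which is estimated directly from Lemma~\ref{lem:f-sigma-bound} and the Stirling-type bounds of Propositions~\ref{prop:S-sum} and~\ref{prop:s1-sum}. Your genus inequality in the last paragraph is morally this same bound, but you invoke it only as heuristic evidence for where equality is attained, not as the mechanism that decouples $\sigma$ from $\tau$.
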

\begin{proof}
    Let us write
    \begin{align}
      \what{\bA} &\colonequals \frac{1}{\sqrt{\alpha(1 - \alpha)}}(\bA - \alpha \bm I_N), \\
      \what{\bD} &\colonequals \frac{1}{\sqrt{\beta(1 - \beta)}}(\bD - \beta \bm I_N).
    \end{align}
    We begin by expanding the trace:
    \begin{align*}
      &\left|\EE \Tr(\what{\bA}\bU \what{\bD} \bU^*)^k\right| \\
      &= \left|\sum_{\substack{i_1, \dots, i_k \in [N] \\ j_1, \dots, j_k \in [N]}} \EE[\what{A}_{i_1i_1} \cdots \what{A}_{i_ki_k}] \,  \EE[\what{D}_{j_1j_1} \cdots \what{D}_{j_kj_k}] \, \EE[U_{i_1j_1} \overline{U_{i_2j_1}} U_{i_2 j_2} \overline{U_{i_3j_2}} \cdots U_{i_k j_k}\overline{U_{i_kj_1}}]\right|
      \intertext{In the expectation over $\bU$, we are in the situation treated by Theorem~\ref{thm:weingarten}, where $\bm i = (i_1, \dots, i_k)$, $\bm i^{\prime} = (i_2, \dots, i_k, i_1)$, and $\bm j = \bm j^{\prime} = (j_1,\dots, j_k)$. Let us write $\rho$ for the cycle permutation $(1 \, 2 \cdots \, n)$. Then, $\sigma \bm i = \bm i^{\prime} = \rho \bm i$ if and only if $\sigma = \rho \sigma^{\prime}$ for some $\sigma^{\prime}$ compatible with $\bm i$. Thus, we may apply Theorem~\ref{thm:weingarten} and expand}
      &= \left|\sum_{\substack{i_1, \dots, i_k \in [N] \\ j_1, \dots, j_k \in [N]}} \EE[\what{A}_{i_1i_1} \cdots \what{A}_{i_ki_k}] \,  \EE[\what{D}_{j_1j_1} \cdots \what{D}_{j_kj_k}] \, \sum_{\substack{\sigma \text{ compatible with } \bm i \\ \tau \text{ compatible with } \bm j}} W(\rho \sigma \tau^{-1})\right| \\
      &= \left|\sum_{\sigma, \tau \in S_k} W(\rho\sigma \tau^{-1})\left(\sum_{\substack{\bm i \in [N]^k \\ \text{compatible with } \sigma}} \EE[\what{A}_{i_1i_1} \cdots \what{A}_{i_ki_k}]\right)\left(\sum_{\substack{\bm j \in [N]^k \\ \text{ compatible with } \tau}} \EE[\what{D}_{j_1j_1} \cdots \what{D}_{j_kj_k}]\right)\right| \\
      &\leq \sum_{\sigma, \tau \in S_k} |W(\rho\sigma \tau^{-1})| \, f_{\alpha}(\sigma)f_{\beta}(\tau), \numberthis
    \end{align*}
    where we define
    \begin{equation}
        f_{\alpha}(\sigma) \colonequals \left|\sum_{\substack{\bm i \in [N]^k \\ \bm i \text{ compatible with } \sigma}} |\EE[\what{A}_{i_1i_1} \cdots \what{A}_{i_ki_k}]\right|,
    \end{equation}
    for $\what{A}_{11}, \dots, \what{A}_{NN}$ distributed i.i.d.\ as $\Ber(\alpha)$.
    We note that this is precisely the quantity controlled by Lemma~\ref{lem:f-sigma-bound}.

    First, by Theorem~\ref{thm:weingarten-bound}, we have
    \begin{equation}
        |W(\rho\sigma \tau^{-1})| \lesssim N^{-k}\left(\frac{4}{N}\right)^{|\rho \sigma \tau^{-1}|}.
    \end{equation}
    Also, by symmetry, the sum above is not changed by replacing $\rho$ with any other $k$-cycle.
    Thus, we may average over the $(k - 1)!$ many $k$-cycles in $S_k$, and, combining with the above bound, we have
    \begin{align*}
      &\hspace{-0.5cm}\EE \Tr(\what{\bA}\bU \what{\bD} \bU^*)^k \\
      &\lesssim \frac{1}{N^k(k - 1)!} \sum_{\rho \in S_k \text{ a } k\text{-cycle}} \sum_{\sigma, \tau \in S_k} \left(\frac{4}{N}\right)^{|\rho \sigma \tau^{-1}|} \, f_{\alpha}(\sigma)f_{\beta}(\tau) \\
      &= \frac{1}{N^k(k - 1)!}\sum_{\sigma, \tau \in S_k} \left(\sum_{\rho \in S_k \text{ a } k\text{-cycle}}\left(\frac{4}{N}\right)^{|\rho \sigma \tau^{-1}|}\right) \, f(\sigma)g(\tau)
        \intertext{where, by Lemma~\ref{lem:cycle-sum},}
      &\lesssim N^{-k} \sum_{\sigma, \tau \in S_k} \frac{k^{|\cyc(\sigma \tau^{-1})|}}{(k - |\cyc(\sigma\tau^{-1})| + 1)!} \left(\frac{4}{N}\right)^{-|\cyc(\sigma\tau^{-1})| + 1} f_{\alpha}(\sigma)f_{\beta}(\tau) \\
      &= N^{-k} \sum_{\sigma, \tau \in S_k} \frac{1}{(k - |\cyc(\sigma\tau^{-1})| + 1)! k^{|\cyc(\sigma \tau^{-1})| - 1}} \left(\frac{4k^2}{N}\right)^{-|\cyc(\sigma\tau^{-1})| + 1} f_{\alpha}(\sigma)f_{\beta}(\tau) \\
      &\leq \frac{1}{N^kk!} \sum_{\sigma, \tau \in S_k} \left(\frac{4k^2}{N}\right)^{-|\cyc(\sigma\tau^{-1})| + 1} f_{\alpha}(\sigma)f_{\beta}(\tau)
        \intertext{Now, we use that, since $\tau$ is a product of $\sum_{a = 1}^k (a - 1) |\cyc_a(\tau)|$ many transpositions, we have $|\cyc(\sigma\tau^{-1})| \geq |\cyc(\sigma)| - \sum_{a = 1}^k (a - 1) |\cyc_a(\tau)| = |\cyc(\sigma)| + |\cyc(\tau)| - k$. Substituting this bound in,}
      &\leq \frac{1}{(4k^2)^kk!} \left(\sum_{\sigma \in S_k} \left(\frac{4k^2}{N}\right)^{|\cyc(\sigma)|} f_{\alpha}(\sigma)\right)\left(\sum_{\tau \in S_k} \left(\frac{4k^2}{N}\right)^{|\cyc(\tau)|} f_{\beta}(\tau)\right). \numberthis \label{eq:invariant-model-initial}
    \end{align*}
    In this way, as a first step, we have eliminated the dependence of our expression on the special $k$-cycle $\rho$ and separated the dependence on $\alpha$ and $\beta$ into two different factors.

    It remains to bound each of the two sums appearing above.
    We work on the sum involving $f_{\alpha}(\sigma)$; the same argument will apply symmetrically to the other sum.
    By Lemma~\ref{lem:f-sigma-bound},
    \begin{align*}
      &\hspace{-0.5cm}\sum_{\sigma \in S_k} \left(\frac{4k^2}{N}\right)^{|\cyc(\sigma)|} f_{\alpha}(\sigma) \\
      &\lesssim \exp(k^{3/4})k \sum_{\sigma \in S_k} \left(\frac{4k^2}{N}\right)^{|\cyc(\sigma)|} w(\alpha)^{-\sum_{a = 3}^k (a - 2)|\cyc_a(\sigma)|} (|\cyc_1(\sigma)|)!! N^{|\cyc_1(\sigma)| / 2 + |\cyc_{\geq 2}(\sigma)|} \\
      &= \exp(k^{3/4})k \sum_{\sigma \in S_k} w(\alpha)^{-\sum_{a = 3}^k (a - 2)|\cyc_a(\sigma)|} (|\cyc_1(\sigma)|)!! N^{-|\cyc_1(\sigma)| / 2} (4k^2)^{|\cyc(\sigma)|}
        \intertext{and, since $(|\cyc_1(\sigma)|)!! \leq |\cyc_1(\sigma)|^{|\cyc_1(\sigma)| / 2} \leq k^{|\cyc_1(\sigma)| / 2}$,}
      &\leq \exp(k^{3/4})k \sum_{\sigma \in S_k} w(\alpha)^{-\sum_{a = 3}^k (a - 2)|\cyc_a(\sigma)|} \left(\frac{k}{N}\right)^{|\cyc_1(\sigma)| / 2} (4k^2)^{|\cyc(\sigma)|} \\
      &= \exp(k^{3/4})k \sum_{\sigma \in S_k} w(\alpha)^{-k + |\cyc_1(\sigma)| + 2 |\cyc_{\geq 2}(\sigma)|} \left(\frac{k}{N}\right)^{|\cyc_1(\sigma)| / 2} (4k^2)^{|\cyc(\sigma)|} \\
      &= \exp(k^{3/4})kw(\alpha)^{-k} \sum_{\sigma \in S_k} \left(\frac{k}{w(\alpha)^2N}\right)^{|\cyc_1(\sigma)| / 2} (4w(\alpha)^2k^2)^{|\cyc(\sigma)|} \\
      &= \exp(k^{3/4})kw(\alpha)^{-k} \sum_{a = 0}^k \binom{k}{a} \left(\frac{k}{w(\alpha)^2N}\right)^{a / 2} (4w(\alpha)^2k^2)^{a} \sum_{\substack{\sigma \in S_{k - a} \\ \cyc_1(\sigma) = \emptyset}} (4w(\alpha)^2k^2)^{|\cyc(\sigma)|}
      \intertext{Applying Proposition~\ref{prop:s1-sum} to the inner sum and bounding $k - a \leq k$,}
      &\leq \exp(2k^{3/4})k^2 k!! w(\alpha)^{-k} \sum_{a = 0}^k \binom{k}{a} \left(\frac{k}{w(\alpha)^2N}\right)^{a / 2} (4w(\alpha)^2k^2)^{(k + a) / 2} \\
      &= \exp(2k^{3/4})k^2 (4k^2)^{k/2} k!! \sum_{a = 0}^k \binom{k}{a} \left(\frac{4k^3}{N}\right)^{a / 2} \\
      &= \exp(2k^{3/4})k^2 \left(1 + \sqrt{\frac{4k^3}{N}}\right)^k (4k^2)^{k/2} k!! \\
      &\leq \exp\left(2k^{3/4} + \frac{2k^{5/2}}{N^{1/2}}\right) k^2 (4k^2)^{k/2} k!!. \numberthis
    \end{align*}

    Applying the same argument to the sum involving $f_{\beta}(\tau)$ in \eqref{eq:invariant-model-initial} and combining, we find
    \begin{align}
      \EE \Tr(\what{\bA}\bU \what{\bD} \bU^*)^k
      &\lesssim \exp\left(4k^{3/4} + \frac{4k^{5/2}}{N^{1/2}}\right) k^4 \frac{k!!^2}{k!}.
    \end{align}
    Finally, since by Stirling's approximation $k!!^2 / k! \lesssim \sqrt{k}$, the result follows.
\end{proof}

\begin{proof}[Proof of Theorem~\ref{thm:edge-km-unitary}]
    We let $\bB \colonequals \bU\bD\bU^{*}$, so that $\bA\bB\bA = \bA\bU\bD\bU^*\bA$ is the matrix under consideration.
    First, using Theorem~\ref{thm:weingarten-bound}, it is straightforward to show that Theorem~\ref{thm:untf} applies to this model, and thus the e.s.d.\ of $\bA\bB\bA$ converges in probability to $\MANOVA(\frac{1}{2}, \alpha)$.
    (Indeed, Theorem~\ref{thm:weingarten-bound} is overkill for this purpose, and much softer bounds on the joint moments of entries of $\bU$ would suffice.)
    Taking $k = cN^{1/4}$ for a sufficiently small constant $c > 0$, we see that the assumptions of Lemma~\ref{lem:invariant-max-eval-bd} are satisfied for sufficiently large $\iota$.
    Thus, all conditions of Theorem~\ref{thm:edge-km} are satisfied, and the result follows.
\end{proof}

\section*{Acknowledgments}
\addcontentsline{toc}{section}{Acknowledgments}

Thanks to Dustin Mixon for first introducing me to the conjectures of \cite{HZG-2017-MANOVA} and for several stimulating conversations about them, and to March Boedihardjo for showing me the reference \cite{AF-2014-AsymptoticallyLiberatingUnitaryMatrices} and discussing asymptotic freeness of idempotents.
Thanks also to Daniel Spielman and Xifan Yu for helpful discussions at the beginning of this project.

\clearpage

\addcontentsline{toc}{section}{References}
\bibliographystyle{alpha}
\bibliography{main}

\newcommand{\etalchar}[1]{$^{#1}$}
\begin{thebibliography}{HGMZ21}

\bibitem[AEK06]{AEK-2006-PrincipalAngleRandomSubspaces}
Pierre-Antoine Absil, Alan Edelman, and Plamen Koev.
\newblock On the largest principal angle between random subspaces.
\newblock {\em Linear Algebra and its Applications}, 414(1):288--294, 2006.

\bibitem[AF14]{AF-2014-AsymptoticallyLiberatingUnitaryMatrices}
Greg~W Anderson and Brendan Farrell.
\newblock Asymptotically liberating sequences of random unitary matrices.
\newblock {\em Advances in Mathematics}, 255:381--413, 2014.

\bibitem[AGZ10]{AGZ-2010-RandomMatrices}
Greg~W Anderson, Alice Guionnet, and Ofer Zeitouni.
\newblock {\em An introduction to random matrices}.
\newblock Cambridge University Press, 2010.

\bibitem[AS64]{AS-1964-HandbookMathematicalFunctions}
Milton Abramowitz and Irene~A Stegun.
\newblock {\em Handbook of mathematical functions with formulas, graphs, and
  mathematical tables}, volume~55.
\newblock US Government Printing Office, 1964.

\bibitem[CC04]{CC-2004-AsymptoticFreenessGaussianWishart}
Mireille Capitaine and Muriel Casalis.
\newblock Asymptotic freeness by generalized moments for {Gaussian} and
  {Wishart} matrices. {Application} to beta random matrices.
\newblock {\em Indiana University Mathematics Journal}, pages 397--431, 2004.

\bibitem[CK14]{CK-2014-LiberationProjections}
Beno{\^\i}t Collins and Todd Kemp.
\newblock Liberation of projections.
\newblock {\em Journal of Functional Analysis}, 266(4):1988--2052, 2014.

\bibitem[CM17]{CM-2017-WeingartenCalculusBounds}
Beno{\^\i}t Collins and Sho Matsumoto.
\newblock Weingarten calculus via orthogonality relations: new applications.
\newblock {\em Latin American Journal of Probability and Mathematical
  Statistics (ALEA)}, 14(1):631--656, 2017.

\bibitem[Col05]{Collins-2005-ProductRandomProjections}
Beno{\^\i}t Collins.
\newblock Product of random projections, {Jacobi} ensembles and universality
  problems arising from free probability.
\newblock {\em Probability Theory and Related Fields}, 133(3):315--344, 2005.

\bibitem[C{\'S}06]{CS-2006-HaarMeasureMoments}
Beno{\^\i}t Collins and Piotr {\'S}niady.
\newblock Integration with respect to the {Haar} measure on unitary, orthogonal
  and symplectic group.
\newblock {\em Communications in Mathematical Physics}, 264(3):773--795, 2006.

\bibitem[DE15]{DE-2015-InfiniteRandomMatrixTheory}
Alexander Dubbs and Alan Edelman.
\newblock Infinite random matrix theory, tridiagonal bordered {Toeplitz}
  matrices, and the moment problem.
\newblock {\em Linear Algebra and its Applications}, 467:188--201, 2015.

\bibitem[Dem08]{Demni-2008-FreeJacobiProcess}
Nizar Demni.
\newblock Free {Jacobi} process.
\newblock {\em Journal of Theoretical Probability}, 21(1):118--143, 2008.

\bibitem[DHH12]{DHH-2012-FreeJacobiProcessSpectral}
Nizar Demni, Tarek Hamdi, and Taoufik Hmidi.
\newblock Spectral distribution of the free {Jacobi} process.
\newblock {\em Indiana University Mathematics Journal}, pages 1351--1368, 2012.

\bibitem[DK08]{DK-2008-ExtremeEigenvaluesJacobi}
Ioana Dumitriu and Plamen Koev.
\newblock Distributions of the extreme eigenvalues of beta-{Jacobi} random
  matrices.
\newblock {\em SIAM Journal on Matrix Analysis and Applications}, 30(1):1--6,
  2008.

\bibitem[DZ09]{DZ-2009-JacobiProcessLargeDeviations}
Nizar Demni and Marguerite Zani.
\newblock Large deviations for statistics of the {Jacobi} process.
\newblock {\em Stochastic Processes and their Applications}, 119(2):518--533,
  2009.

\bibitem[EF13]{EF-2013-MANOVALocalDensity}
L{\'a}szl{\'o} Erd{\H{o}}s and Brendan Farrell.
\newblock Local eigenvalue density for general {MANOVA} matrices.
\newblock {\em Journal of Statistical Physics}, 152(6):1003--1032, 2013.

\bibitem[ENV07]{ENV-1998-SaintFlourNotes}
Michel Emery, Arkadi Nemirovski, and Dan Voiculescu.
\newblock {\em Lectures on Probability Theory and Statistics: Ecole D'Ete de
  Probabilites de Saint-Flour XXVIII-1998}.
\newblock Springer, 2007.

\bibitem[ES08]{ES-2008-BetaJacobiMatrixModel}
Alan Edelman and Brian~D Sutton.
\newblock The beta-{Jacobi} matrix model, the {CS} decomposition, and
  generalized singular value problems.
\newblock {\em Foundations of Computational Mathematics}, 8(2):259--285, 2008.

\bibitem[Far11]{Farrell-2011-LimitingEmpirical}
Brendan Farrell.
\newblock Limiting empirical singular value distribution of restrictions of
  discrete {Fourier} transform matrices.
\newblock {\em Journal of Fourier Analysis and Applications}, 17(4):733--753,
  2011.

\bibitem[FK81]{FK-1981-EigenvaluesRandomMatrices}
Zolt{\'a}n F{\"u}redi and J{\'a}nos Koml{\'o}s.
\newblock The eigenvalues of random symmetric matrices.
\newblock {\em Combinatorica}, 1(3):233--241, 1981.

\bibitem[FN15]{FN-2013-TruncationsTensorHaarUnitary}
Brendan Farrell and Raj~Rao Nadakuditi.
\newblock Local spectrum of truncations of {Kronecker} products of {Haar}
  distributed unitary matrices.
\newblock {\em Random Matrices: Theory and Applications}, 4(1), 2015.

\bibitem[Gem80]{Geman-1980-LimitNormRandomMatrix}
Stuart Geman.
\newblock A limit theorem for the norm of random matrices.
\newblock {\em The Annals of Probability}, pages 252--261, 1980.

\bibitem[HGMZ21]{HGMZ-2021-AsymptoticFrameTheory}
Marina Haikin, Matan Gavish, Dustin~G Mixon, and Ram Zamir.
\newblock Asymptotic frame theory for analog coding.
\newblock {\em Foundations and Trends{\textregistered} in Communications and
  Information Theory}, 18(4):526--645, 2021.

\bibitem[How80]{Howard-1980-AssociatedStirlingNumbers}
FT~Howard.
\newblock Associated {Stirling} numbers.
\newblock {\em Fibonacci Quarterly}, 18(4):303--315, 1980.

\bibitem[HZ21]{HZ-2021-MomentsSubsetsETF}
Marina Haikin and Ram Zamir.
\newblock Moments of subsets of general equiangular tight frames.
\newblock {\em arXiv preprint arXiv:2107.00888}, 2021.

\bibitem[HZG17]{HZG-2017-MANOVA}
Marina Haikin, Ram Zamir, and Matan Gavish.
\newblock Random subsets of structured deterministic frames have {MANOVA}
  spectra.
\newblock {\em Proceedings of the National Academy of Sciences},
  114(26):E5024--E5033, 2017.

\bibitem[Jia13]{Jiang-2013-LimitTheoremsJacobiEnsembles}
Tiefeng Jiang.
\newblock Limit theorems for beta-{Jacobi} ensembles.
\newblock {\em Bernoulli}, 19(3):1028--1046, 2013.

\bibitem[Joh08]{Johnstone-2008-JacobiMatrixLargestEigenvalue}
Iain~M Johnstone.
\newblock Multivariate analysis and {Jacobi} ensembles: Largest eigenvalue,
  {Tracy}--{Widom} limits and rates of convergence.
\newblock {\em Annals of Statistics}, 36(6):2638, 2008.

\bibitem[LM23]{LM-2020-FreeProbabilityKestenMcKay}
Iris Stephanie~Arenas Longoria and James~A Mingo.
\newblock Freely independent coin tosses, standard {Young} tableaux, and the
  {Kesten}--{McKay} law.
\newblock {\em American Mathematical Monthly}, 130(1):35--48, 2023.

\bibitem[MMP21]{MMP-2019-RandomSubensembles}
Mark Magsino, Dustin~G Mixon, and Hans Parshall.
\newblock {Kesten}--{McKay} law for random subensembles of {Paley} equiangular
  tight frames.
\newblock {\em Constructive Approximation}, 53:381--402, 2021.

\bibitem[MS17]{MS-2017-FreeProbabilityRandomMatrices}
James~A Mingo and Roland Speicher.
\newblock {\em Free probability and random matrices}, volume~35.
\newblock Springer, 2017.

\bibitem[Nov14]{Novak-2014-ThreeLectures}
Jonathan Novak.
\newblock Three lectures on free probability.
\newblock {\em Random matrix theory, interacting particle systems, and
  integrable systems}, 65(309-383):13, 2014.

\bibitem[NS06]{NS-2006-LecturesCombinatoricsFreeProbability}
Alexandru Nica and Roland Speicher.
\newblock {\em Lectures on the combinatorics of free probability}, volume~13.
\newblock Cambridge University Press, 2006.

\bibitem[RK16]{RK-2016-MANOVAColumnSubsampled}
Raviv Raich and Jinsub Kim.
\newblock On the eigenvalue distribution of column sub-sampled semi-unitary
  matrices.
\newblock In {\em 2016 IEEE Statistical Signal Processing Workshop (SSP)},
  pages 1--5. IEEE, 2016.

\bibitem[SSB{\etalchar{+}}91]{SSBCHMW-1991-RiordanGroup}
Louis Shapiro, Renzo Sprugnoli, Paul Barry, Gi-Sang Cheon, Tian-Xiao He,
  Donatella Merlini, and Weiping Wang.
\newblock The {Riordan} group.
\newblock In {\em The Riordan Group and Applications}, pages 47--67. Springer,
  1991.

\bibitem[Tao12]{Tao-2012-RandomMatrixTheory}
Terence Tao.
\newblock {\em Topics in random matrix theory}, volume 132.
\newblock American Mathematical Soc., 2012.

\bibitem[TCSV10]{TCSV-2010-CapacityChannelsFadingWachter}
Antonia~M Tulino, Giuseppe Caire, Shlomo Shamai, and Sergio Verd{\'u}.
\newblock Capacity of channels with frequency-selective and time-selective
  fading.
\newblock {\em IEEE Transactions on Information Theory}, 56(3):1187--1215,
  2010.

\bibitem[TV04]{TV-2004-RandomMatrixWirelessCommunications}
Antonia~M Tulino and Sergio Verd{\'u}.
\newblock Random matrix theory and wireless communications.
\newblock {\em Foundations and Trends{\textregistered} in Communications and
  Information Theory}, 1(1):1--182, 2004.

\bibitem[TVCS07]{TVCS-2007-GaussianErasureChannel}
Antonia Tulino, Sergio Verd{\'u}, Giuseppe Caire, and Shlomo Shamai.
\newblock The {Gaussian} erasure channel.
\newblock In {\em 2007 IEEE International Symposium on Information Theory},
  pages 1721--1725. IEEE, 2007.

\bibitem[VDN92]{VDN-1992-FreeRandomVariables}
Dan~V Voiculescu, Ken~J Dykema, and Alexandru Nica.
\newblock {\em Free random variables}.
\newblock Number~1. American Mathematical Soc., 1992.

\bibitem[Voi91]{Voiculescu-1991-RandomMatricesFreeProducts}
Dan Voiculescu.
\newblock Limit laws for random matrices and free products.
\newblock {\em Inventiones Mathematicae}, 104(1):201--220, 1991.

\bibitem[Wac80]{Wachter-1980-EmpiricalMeasureDiscriminantRatios}
Kenneth~W Wachter.
\newblock The limiting empirical measure of multiple discriminant ratios.
\newblock {\em The Annals of Statistics}, pages 937--957, 1980.

\bibitem[Wal18]{Waldron-2018-FiniteTightFrames}
Shayne~FD Waldron.
\newblock {\em An introduction to finite tight frames}.
\newblock Springer, 2018.

\end{thebibliography}

\clearpage

\appendix

\section{MANOVA distribution parametrizations}
\label{app:manova-param}

In this appendix, we detail how our parametrization of the MANOVA family of distributions relates to others that have appeared in the literature.
In particular, we clarify its relationship with the parametrization of \cite{DE-2015-InfiniteRandomMatrixTheory}, whose results on orthogonal polynomials we apply in Appendix~\ref{app:gf}, and with that of \cite{HGMZ-2021-AsymptoticFrameTheory}, who state their conjecture that we prove in the main text in terms of yet another parametrization.

\subsection{Parametrization of \cite{DE-2015-InfiniteRandomMatrixTheory}}
\label{sec:param-de}

The MANOVA distribution is parametrized in \cite{DE-2015-InfiniteRandomMatrixTheory} by parameters $a, b > 0$, which are related to our $\alpha, \beta \in (0, 1)$ by
\begin{align}
  a &= \frac{\beta}{\alpha}, \\
  b &= \frac{1}{\alpha} - a = \frac{1 - \beta}{\alpha},
      \intertext{and we observe that we also have}
      a + b &= \frac{1}{\alpha}.
\end{align}
In \cite{DE-2015-InfiniteRandomMatrixTheory} they work under the assumption that $a, b \geq 1$, which for our parameters translates to $\alpha \leq \min\{\beta, 1 - \beta\}$.
In particular, under this assumption we have that the mass of the atom at 1 of our distribution is
\begin{equation}
    \max\{\alpha + \beta - 1, 0\} = \max\{\alpha - (1 - \beta), 0\} = 0,
\end{equation}
so the results of \cite{DE-2015-InfiniteRandomMatrixTheory} pertain only to the case where there is no atom at 1.

On the other hand, the mass of the atom at 0 is
\begin{equation}
    1 - \min\{\alpha, \beta\} = 1 - \alpha > 0,
\end{equation}
since $\alpha \leq \frac{1}{2}$ under the above assumption.
However, the distribution \cite{DE-2015-InfiniteRandomMatrixTheory} work with omits this atom and renormalizes the distribution.
The reason for this is that these authors are interested in the \emph{Jacobi ensemble} of random matrices, whose eigenvalues are related to a product of random projections but are never zero; see the definition in our Section~\ref{sec:related}.

As a sanity check, we may verify that the endpoints of the support of the absolutely continuous part of their distribution are the same as ours; following their Table 1,
\begin{align*}
  r_{\pm}
  &= \left(\frac{\sqrt{b} \pm \sqrt{a(a + b - 1)}}{a + b}\right)^2 \\
  &= \left(\frac{\sqrt{\frac{1 - \beta}{\alpha}} \pm \sqrt{\frac{\beta}{\alpha}(\frac{1}{\alpha} - 1)}}{\frac{1}{\alpha}}\right)^2 \\
  &= \left(\sqrt{\alpha(1 - \beta)} \pm \sqrt{\beta(1 - \alpha)}\right)^2. \numberthis
\end{align*}
And, again from their Table 1, the density they work with is indeed identical to ours except for being larger by an extra factor of $a + b = \frac{1}{\alpha}$, which is precisely the renormalization without the zero atom we have claimed.

To help the reader parse our discussion in Appendix~\ref{app:gf}, we also give a few further equations between the $\alpha, \beta$ parameters and the $a, b$ parameters that appear in the orthogonal polynomial recursion of Proposition~\ref{prop:de-orth-poly}:
\begin{align}
  \frac{a}{a + b} &= \frac{\frac{\beta}{\alpha}}{\frac{1}{\alpha}} = \beta, \\
  \frac{a + b}{a + b - 1} &= \frac{\frac{1}{\alpha}}{\frac{1}{\alpha} - 1} = \frac{1}{1 - \alpha}, \\
  \sqrt{ab(a + b - 1)} &= \sqrt{\frac{\beta(1 - \beta)}{\alpha^2}\left(\frac{1}{\alpha} - 1\right)} = \frac{1}{\alpha^{3/2}} \sqrt{(1 - \alpha)\beta(1 - \beta)}, \\
  \frac{\sqrt{ab(a + b - 1)}}{(a + b)^2} &= \sqrt{\alpha(1 - \alpha)\beta(1 - \beta)}, \\
  b + a(a + b - 1) &= \frac{1 - \beta}{\alpha} + \frac{\beta}{\alpha}\left(\frac{1}{\alpha} - 1\right) = \frac{\alpha(1 - \beta) + \beta(1 - \alpha)}{\alpha^2}, \\
  \frac{\sqrt{ab(a + b - 1)}}{b + a(a + b - 1)} &= \frac{\sqrt{\alpha(1 - \alpha)\beta(1 - \beta)}}{\alpha(1 - \beta) + \beta(1 - \alpha)}.
\end{align}

\subsection{Parametrization of \cite{HZG-2017-MANOVA, HGMZ-2021-AsymptoticFrameTheory}}

The MANOVA distribution is parametrized in \cite{HZG-2017-MANOVA, HGMZ-2021-AsymptoticFrameTheory} (which state the conjecture that Theorem~\ref{thm:untf} proves) by parameters $\widetilde{\beta} > 0$ and $\widetilde{\gamma} \in (0, 1)$.
(Their work simply calls them $\beta, \gamma$, but we rename to $\widetilde{\beta}, \widetilde{\gamma}$ to avoid confusion between our and their $\beta$ parameters.)
These are related to our $\alpha, \beta$ by
\begin{align}
  \widetilde{\beta} &= \frac{\alpha}{\beta}, \\
  \widetilde{\gamma} &= \beta.
\end{align}
Their MANOVA distribution is also rescaled by $\frac{1}{\widetilde{\gamma}} = \frac{1}{\beta}$.
After accounting for this rescaling, we may again verify that the endpoints of the support of the absolutely continuous part of their distribution are the same as ours; following their Equation (9), and writing $\widetilde{r}_{\pm}$ for the endpoints of their distribution's support,
\begin{align*}
    \widetilde{r}_{\pm}
    &= \left( \sqrt{\widetilde{\beta}(1 - \widetilde{\gamma})} \pm \sqrt{1 - \widetilde{\beta}\widetilde{\gamma}}\right)^2 \\
    &= \left( \sqrt{\frac{\alpha}{\beta}(1 - \beta)} \pm \sqrt{1 - \alpha}\right)^2 \\
    &= \frac{1}{\beta} \left( \sqrt{\alpha(1 - \beta)} \pm \sqrt{\beta(1 - \alpha)}\right)^2 \\
    &= \frac{1}{\widetilde{\gamma}}\, r_{\pm}. \numberthis
\end{align*}
We note that this parametrization, unlike that of \cite{DE-2015-InfiniteRandomMatrixTheory}, can include atoms both at~0 and at~1.

\section{MANOVA moment recursion revisited: Direct proof of Lemma~\ref{lem:manova-recursion}}
\label{app:gf}

We give an alternative approach to proving a special case of the recursion of Lemma~\ref{lem:manova-recursion} for the moments of $\MANOVA(\alpha, \beta)$, which we believe might be interesting in its own right (we developed this proof before finding the free probability ``shortcuts'' used in the main text) and which will develop some generating function tools used in the proof of Theorem~\ref{thm:edge-km}.

Let us sketch the general proof strategy first.
We will only consider the case
\begin{equation}
    \alpha \leq \min\{\beta, 1 - \beta\}.
\end{equation}
First, it will turn out to be more useful to use the following adjusted version of the MANOVA family.
\begin{definition}[\cite{DE-2015-InfiniteRandomMatrixTheory}'s MANOVA distributions]
    Suppose $\alpha \leq \min\{\beta, 1 - \beta\}$.
    Write $\DE(\alpha, \beta)$ for the law of $X \sim \MANOVA(\alpha, \beta)$ conditional on $X > 0$.
\end{definition}
\noindent
The moments of this distribution have the following simple relation to the $m_k$ above.
Write
\begin{equation}
    m_k^{\prime} \colonequals \EE_{X \sim \DE(\alpha, \beta)}[X^k].
\end{equation}
Then, for all $k \geq 1$, since for $X \sim \MANOVA(\alpha, \beta)$ we have $\PP[X = 0] = 1 - \alpha$, we have
\begin{equation}
    m_k = \Ex_{X \sim \MANOVA(\alpha, \beta)}X^k = \Px_{X \sim \MANOVA(\alpha, \beta)}[X > 0]\Ex_{X \sim \MANOVA(\alpha, \beta)}[X^k \mid X > 0] = \alpha m_k^{\prime}.
\end{equation}
Then, our claim in Lemma~\ref{lem:manova-recursion} is equivalent to showing that, for all $k \geq 0$,
\begin{equation}
    \widetilde{m}_k = \alpha \Ex_{X \sim \DE(\alpha, \beta)} \left(\frac{X}{\beta} - 1\right)^k
\end{equation}

This says that the moments (after the affine transformation $x \mapsto \frac{x}{\beta} - 1$) of $\DE(\alpha, \beta)$ satisfy a certain recursion.
Let us express this claim as one about expectations of polynomials associated to this recursion.
\begin{definition}
    For each $k \geq 1$, define the polynomial
    \begin{equation}
        s_k(x) \colonequals \sum_{a = 0}^{k} \left(\frac{x}{\beta} - 1\right)^{a} \sum_{j = 1}^{k} (-\alpha)^{k - j} q_{k, j, a}(\beta^{-1} - 1) - \alpha^{k - 1}\left(\sum_{b = 1}^{k - 1}(1 - \beta^{-1})^b\right).
    \end{equation}
\end{definition}
\noindent
Then, Lemma~\ref{lem:manova-recursion} will follow from showing that
\begin{equation}
    \Ex_{X \sim \DE(\alpha, \beta)} s_k(X) = 0,
\end{equation}
where we observe that $q_{k, k, k}(x) = 1$ for all $k \geq 1$, so we equivalently have
\begin{align*}
  s_k(x) &= \left(\frac{x}{\beta} - 1\right)^k + \sum_{a = 0}^{k - 1} \left(\frac{x}{\beta} - 1\right)^{a} \sum_{j = 1}^{k} (-\alpha)^{k - j} q_{k, j, a}(\beta^{-1} - 1) \\
  &\hspace{2.42cm} - \alpha^{k - 1}\left(\sum_{b = 1}^{k - 1}(1 - \beta^{-1})^b\right), \numberthis
\end{align*}
which corresponds more transparently to the recursion in Lemma~\ref{lem:manova-recursion}.
(Here, $q_{k, k, a}(z) = \One\{k = a\}$, so changing the limit of the $j$ summation from $k$ to $k - 1$ does not introduce a discrepancy between this and the recursion in Lemma~\ref{lem:manova-recursion}.)

We will actually do more than that; we will instead derive (in an implicit form in terms of generating functions) the expansion of $s_k(x)$ in orthogonal polynomials for $\DE(\alpha, \beta)$, from which we will be able to read off that the coefficient of the constant polynomial in this expansion is zero.

We execute our plan in two steps: first, we compute a generating function for the $s_k(x)$, which mostly amounts to computing a generating function for the $q_{k, j, a}(x)$.
Second, we compute a generating function for a sequence of orthogonal polynomials for $\DE(\alpha, \beta)$, which fortunately follows directly from prior work of \cite{DE-2015-InfiniteRandomMatrixTheory}.
With these two expressions in hand, the result will amount to an algebraic manipulation.

We first turn to computing a generating function for the $s_k(x)$.
The following combinatorial definition will turn out to be crucial.
\begin{definition}[Cyclic runs]
    For a subset $T \subseteq [n]$ with $0 < |T| < n$, view $T$ as a subset of the vertices of the $n$-cycle.
    Then, let $\runs(T)$ denote the number of connected components in the subgraph of the $n$-cycle induced by $T$.
    We further extend this definition to $\runs(\emptyset) = \runs([n]) = 0$.
\end{definition}
\noindent
It may seem unusual to set $\runs([n]) = 0$ rather than $\runs([n]) = 1$, but we will see that this is natural in our setting: the key point will be that we need to count the ``breaks'' between runs in $T$ rather than runs of elements of $T$ themselves.

\begin{proposition}[Cyclic runs generating function]
    \label{prop:sets-runs-gf}
    As a formal power series,
    \begin{equation}
        R(x, y, z) \colonequals \sum_{n \geq 1} \sum_{T \subseteq [n]} x^{n} y^{|T|} z^{\runs(T)} = \frac{2x^2yz + xy(1 - x) + x(1 - xy)}{(1 - xy)(1 - x) - x^2yz}.
    \end{equation}
\end{proposition}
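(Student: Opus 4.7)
The plan is to realize $R(x,y,z)$ as the trace of a rational function of a $2\times 2$ transfer matrix on the cycle.

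First I would reformulate the statistic. I claim that for any $T\subseteq [n]$, $\runs(T)$ equals the number of indices $i\in[n]$ with $i\in T$ and $i-1\notin T$ (reading indices mod $n$); these are the ``left endpoints'' of the arcs of $T$. For any proper non-empty $T$ this agrees with the connected-component count of the induced subgraph on the $n$-cycle, and for $T=\emptyset$ or $T=[n]$ there are no $\bar T\to T$ transitions, so both give zero, matching the stated convention. This is the one step where the unusual choice $\runs([n])=0$ becomes essential and automatic.

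Next, encode $T$ by its indicator sequence $s=(s_1,\dots,s_n)\in\{0,1\}^n$ and read the cycle from position $1$ to $n$ with the convention $s_0\colonequals s_n$. Assign to each position $i$ the local weight $w(s_{i-1},s_i)$ equal to $1$ if $s_i=0$, to $y$ if $s_i=s_{i-1}=1$, and to $yz$ if $s_i=1$ and $s_{i-1}=0$. By the reformulation above, $\prod_{i=1}^n w(s_{i-1},s_i)=y^{|T|}z^{\runs(T)}$. Collecting the four weights into the transfer matrix
\begin{equation*}
M \colonequals \begin{pmatrix} w(0,0) & w(0,1) \\ w(1,0) & w(1,1)\end{pmatrix}=\begin{pmatrix} 1 & yz \\ 1 & y\end{pmatrix},
\end{equation*}
with rows indexed by $s_{i-1}$ and columns by $s_i$, the sum over cyclic sequences of length $n$ becomes the trace $\sum_{T\subseteq[n]}y^{|T|}z^{\runs(T)}=\Tr(M^n)$. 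Summing over $n\geq 1$,
\begin{equation*}
R(x,y,z)=\sum_{n\geq 1}x^n\Tr(M^n)=\Tr\!\left(xM(I-xM)^{-1}\right).
\end{equation*}

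The last step is a purely mechanical $2\times 2$ computation. From $I-xM=\left(\begin{smallmatrix} 1-x & -xyz \\ -x & 1-xy\end{smallmatrix}\right)$ one reads off $\det(I-xM)=(1-x)(1-xy)-x^2yz$, which will be the denominator. Computing $xM$ times the adjugate of $I-xM$ and taking the trace gives $x(1-xy)+xy(1-x)+2x^2yz$, the claimed numerator. I do not expect any real obstacle: the only conceptual point is the reformulation of $\runs$ so that the conventions on $\emptyset$ and $[n]$ are absorbed automatically into the same transfer-matrix expression; everything afterwards is a routine cofactor calculation.
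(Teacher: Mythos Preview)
Your proof is correct and takes a genuinely different route from the paper. The paper splits the sum into four cases according to whether $1\in T$ and $n\in T$, expresses each case through the auxiliary generating function
\[
F(x,y,z)=\frac{1}{1-\frac{x^2yz}{(1-xy)(1-x)}}
\]
that encodes alternating blocks of elements in and out of $T$ in a linear (non-cyclic) string, and then sums the four contributions. Your transfer-matrix argument replaces this case analysis by the single identity $\sum_{T\subseteq[n]}y^{|T|}z^{\runs(T)}=\Tr(M^n)$, so that the cyclic boundary condition is handled automatically by the trace rather than by hand. The key insight that makes this work cleanly is your reformulation of $\runs(T)$ as a count of $0\!\to\!1$ transitions, which absorbs the nonstandard conventions $\runs(\emptyset)=\runs([n])=0$ without any special bookkeeping. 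The paper's approach is more elementary in that it uses no linear algebra, but yours is shorter, more systematic, and would generalize more readily (e.g., to weighted alphabets or longer-range dependence).
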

\begin{proof}
    We view this generating function as a sum of four generating functions, depending on the membership of $1$ and $n$ in $T$.
    These may all be expressed in terms of the generating function
    \begin{equation}
        F(x, y, z) = \sum_{k \geq 0}\left(\left(z\sum_{i \geq 1}x^iy^i\right)\left(\sum_{i \geq 1}x^i\right)\right)^k = \frac{1}{1 - \frac{x^2yz}{(1 - xy)(1 - x)}}.
    \end{equation}
    The four summands are:
    \begin{align}
      1 \in T, n \notin T &\rightsquigarrow F(x, y, z) \left(z\sum_{i \geq 1}x^iy^i\right)\left(\sum_{i \geq 1}x^i\right) = F(x, y, z) \frac{x^2yz}{(1 - xy)(1 - x)} , \\
      1 \notin T, n \in T &\rightsquigarrow F(x, y, z) \left(z\sum_{i \geq 1}x^iy^i\right)\left(\sum_{i \geq 1}x^i\right) = F(x, y, z) \frac{x^2yz}{(1 - xy)(1 - x)}, \\
      1 \in T, n \in T, &\rightsquigarrow F(x, y, z)\sum_{i \geq 1}x^iy^i = F(x, y, z) \frac{xy}{1 - xy}, \\
      1 \notin T, n \notin, T &\rightsquigarrow F(x, y, z) \sum_{i \geq 1} x^i = F(x, y, z) \frac{x}{1 - x}.
    \end{align}
    The final generating function is then the sum of these upon simplifying.
\end{proof}

\begin{proposition}[$q_{k, j, a}$ generating function]
    \label{prop:q-gf}
    As a formal power series,
    \begin{align*}
      Q(w, x, y, z)
      &\colonequals \sum_{k \geq 1}\sum_{j = 0}^k \sum_{a = 0}^j q_{k, j, a}(w) x^k y^j z^a \\
      &= \frac{xy(-w^2x^2 + wx^2z + wx^2 + 2wx + z)}{(wx^2y + wx^2 + xyz + wx - x - 1)(wx - 1)(x + 1)}. \numberthis
    \end{align*}
\end{proposition}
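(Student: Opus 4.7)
The plan is to peel the definition apart layer by layer. First, I will perform the summation over $A \in \binom{[j]}{a}$ with weight $z^a$. Because the factor depending on $A$ splits across the index $i \in [j]$ (the $i$th factor depends only on whether $i \in A$), this sum factorizes as a product. After evaluating the geometric series $\sum_{b=0}^{p-1}(-1)^{p-1-b}w^b = \frac{(-1)^{p-1} + w^p}{1+w}$ and simplifying, each per-part factor collapses to the compact form
\[
(1+z) \cdot \frac{(-1)^{p-1} + w^p}{1+w} + (-1)^p = \frac{g(p)}{1+w}, \qquad g(p) \colonequals (1+z)w^p + (-1)^p(w-z).
\]
This yields
\[
\sum_{a=0}^j z^a q_{k,j,a}(w) = (1+w)^{-j} \sum_{S \in \binom{[k]}{j}} \prod_{i=1}^j g(p_i(S)).
\]

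Second, I will re-index the inner sum by ordered compositions. The key combinatorial observation is that each composition $(p_1, \dots, p_j)$ of $k$ into positive parts is the induced composition of exactly $p_j$ distinct subsets $S \in \binom{[k]}{j}$: after fixing the gaps, $S$ is determined by its smallest element $s_1 \in \{1, 2, \dots, p_j\}$. Introducing
\[
G(x) \colonequals \sum_{p \geq 1} g(p) x^p, \qquad H(x) \colonequals xG'(x) = \sum_{p \geq 1} p\, g(p) x^p,
\]
this counting gives $\sum_{k \geq 1} x^k \sum_{S \in \binom{[k]}{j}} \prod_i g(p_i(S)) = G(x)^{j-1} H(x)$. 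Summing the geometric series in $j \geq 1$ (the $j = 0$ case contributes zero by convention) yields the intermediate formula
\[
Q(w, x, y, z) = \frac{y H(x)}{(1+w) - y G(x)}.
\]

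Third, closed forms for $G$ and $H$ come from the elementary series $\sum_{p \geq 1} (wx)^p = \frac{wx}{1-wx}$ and $\sum_{p \geq 1}(-x)^p = \frac{-x}{1+x}$ together with their derivatives. A direct simplification collapses $G(x)$ to $\frac{x(1+w)(z+wx)}{(1-wx)(1+x)}$, and a parallel but slightly longer manipulation expresses $H(x)$ over the common denominator $(1-wx)^2(1+x)^2$. Substituting and putting everything over a common denominator, the factor $(1+w)$ cancels between numerator and denominator, the denominator becomes $(wx-1)(x+1)(wx^2y + wx^2 + xyz + wx - x - 1)$ up to sign, and the numerator reduces to $xy(-w^2x^2 + wx^2z + wx^2 + 2wx + z)$, matching the claimed rational expression.

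The main obstacle is not conceptual but bureaucratic: the final rational-function simplification, tracking signs and cancelling $(1+w)$ and $(1-wx)(1+x)$ factors while combining $G$ and $H$ over a common denominator, is tedious though entirely mechanical. The genuinely creative content lies in the two combinatorial reductions---the per-part factorization after summing over $A$, and the $p_j$-to-one correspondence between compositions and subsets $S$---which together collapse the triple sum to a product of explicit rational generating functions.
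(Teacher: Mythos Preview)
Your argument is correct and complete; the per-part factorization of the $A$-sum and the $p_j$-fold counting of compositions are both valid, and the resulting identity $Q = yH/((1+w) - yG)$ simplifies exactly as you indicate (the $(1+w)$ cancels, one factor of $(1-wx)(1+x)$ survives, and the numerator of $xG'(x)$ is precisely $x(-w^2x^2 + wx^2z + wx^2 + 2wx + z)$).

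The paper takes a rather different route. It works with $Q(-w,-x,-y,z)$, expands each factor $zc_{p_i}(-w) + d_{p_i}(-w)$ as $z + (z+1)\sum_{b=1}^{p_i-1} w^b$, and then tracks the expansion of the full product by marking, for each part of $S$, which power $r_i$ of $w$ was chosen. This data is recorded as a vertex subset $T$ of the $k$-cycle (a run of length $r_i$ starting just after each edge of $S$), and the contribution depends only on $|T|$ and the number of cyclic runs $\runs(T)$. Summing over compatible pairs $(S,T)$ reduces everything to the auxiliary cyclic-runs generating function $R(x,y,z)$ of their Proposition~\ref{prop:sets-runs-gf}, followed by a substitution and algebraic cleanup.

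Your approach is more direct: by summing over $A$ first you get a clean product over parts, and the composition-counting observation lets you bypass the cyclic-runs bijection entirely. The paper's route is longer but makes the necklace combinatorics underlying the $q_{k,j,a}$ more visible, and reuses the $R$ generating function that has independent interest. Your route trades that structural picture for a shorter path to the rational function, with the derivative trick $H = xG'$ doing the work that $R$ does in the paper.
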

\begin{proof}
    We begin working with the generating function up to sign flips in $w$, $x$, and $y$:
    \begin{align*}
      Q(-w, -x, -y, z)
      &= \sum_{k \geq 1}\sum_{j = 1}^k \sum_{a = 0}^j q_{k, j, a}(-w) (-x)^k (-y)^j z^a \\
      &= \sum_{k \geq 1} x^k \sum_{j = 1}^k y^j \sum_{S \in \binom{[k]}{j}} \prod_{i = 1}^j\left(zc_{p_i(S)}(-w) + d_{p_i(S)}(-w)\right) \\
      &= \sum_{k \geq 1} x^k \sum_{j = 1}^k y^j \sum_{S \in \binom{[k]}{j}} \prod_{i = 1}^j\left(z + (z + 1)\sum_{b = 1}^{p_i(S) - 1} w^b\right). \numberthis
    \end{align*}
    Consider expanding the innermost product.
    We introduce the following bookkeeping mechanism for the terms that arise: consider the $k$-cycle, oriented clockwise, and labelled on both vertices and edges such that edge $i$ points from vertex $i$ to vertex $i + 1$ (modulo $k$).
    View each $S \subseteq [k]$ as a set of edges of this $k$-cycle.
    To each $S = \{s_1 < \cdots < s_j\}$ and a choice of numbers $0 \leq r_i \leq p_i(S) - 1$ for each $i \in [|S|]$, associate the set $T \subseteq [k]$ of vertices that, for each $r_i > 0$, contains vertex $s_i + 1$ together with the subsequent $r_i - 1$ vertices (i.e., the sequence of $r_i - 1$ consecutive vertices starting from $s_i + 1$ and following the orientation of the cycle).
    See Figure~\ref{fig:cycle-S-T} for an illustration of this construction.
    This $T$ has $|T| = r_1 + \dots + r_j$, and has $\runs(T) = \#\{r_i: r_i > 0\}$.

    Call $S$ and $T$ \emph{compatible} if it is possible to obtain $T$ from $S$ for some choice of $r_i$.
    This is the case if and only if the following two conditions hold: $S$ contains the edge pointing to the first vertex of every run of $T$, and $S$ does not contain any edge pointing from a vertex of $T$.
    (Note that, by the definition of a run, no edge pointing to the first vertex of a run can point from a vertex of $T$.)
    To count the $S \in \binom{[k]}{j}$ that are compatible with a given $T$, we observe that the above conditions specify $\runs(T)$ many elements of $S$, and prohibit $|T|$ many elements of $S$; thus, the number of such $S$ is $\binom{k - |T| - \runs(T)}{j - \runs(T)}$.

    Then, we may reorganize the sums in the above calculation as
    \begin{align*}
      &\hspace{-0.0cm}Q(-w, -x, -y, z) \\
      &= \sum_{k \geq 1} x^k \sum_{j = 1}^k y^j \sum_{S \in \binom{[k]}{j}} \prod_{i = 1}^j\left(z + (z + 1)\sum_{b = 1}^{p_i(S) - 1} w^b\right)
        \intertext{where, expanding the product using the above definitions,}
      &= \sum_{k \geq 1} x^k\sum_{j = 1}^k y^j \sum_{T \subseteq [k]} \#\left\{S \in \binom{[k]}{j}: S \text{ compatible with } T \right\} w^{|T|} (z + 1)^{\runs(T)} z^{j - \runs(T)} \\
      &= \sum_{k \geq 1} x^k\sum_{T \subseteq [k]} w^{|T|} \left(\frac{z + 1}{z}\right)^{\runs(T)} \sum_{j = 1}^k \binom{k - |T| - \runs(T)}{j - \runs(T)} (yz)^j
        \intertext{and, reindexing}
      &= \sum_{k \geq 1} x^k\sum_{T \subseteq [k]} w^{|T|} \left(\frac{z + 1}{z}\right)^{\runs(T)} \\
      &\hspace{1.0cm}\left(\sum_{j = 0}^{k - |T| - \runs(T)} \binom{k - |T| - \runs(T)}{j} (yz)^{j + \runs(T)} - \One\{T = \emptyset\} - \One\{T = [k]\}\right)
        \intertext{and, by the binomial theorem, }
      &= \sum_{k \geq 1} x^k\sum_{T \subseteq [k]} w^{|T|} \left(\frac{z + 1}{z}\right)^{\runs(T)} (yz)^{\runs(T)}(1 + yz)^{k - |T| - \runs(T)} - \frac{x}{1 - x} - \frac{wx}{1 - wx} \\
      &= \sum_{k \geq 1} \sum_{T \subseteq [k]} (x + xyz)^k \left(\frac{w}{1 + yz}\right)^{|T|} \left(\frac{y + yz}{1 + yz}\right)^{\runs(T)} - \frac{x}{1 - x} - \frac{wx}{1 - wx} \\
      &= R\left(x + xyz, \frac{w}{1 + yz}, \frac{y + yz}{1 + yz}\right) - \frac{x}{1 - x} - \frac{wx}{1 - wx}, \numberthis
    \end{align*}
    and finally an application of Proposition~\ref{prop:sets-runs-gf} and some algebraic simplification gives the result.
\end{proof}

\begin{figure}
    \label{fig:cycle-S-T}
    \begin{center}
        \includegraphics[scale=0.5]{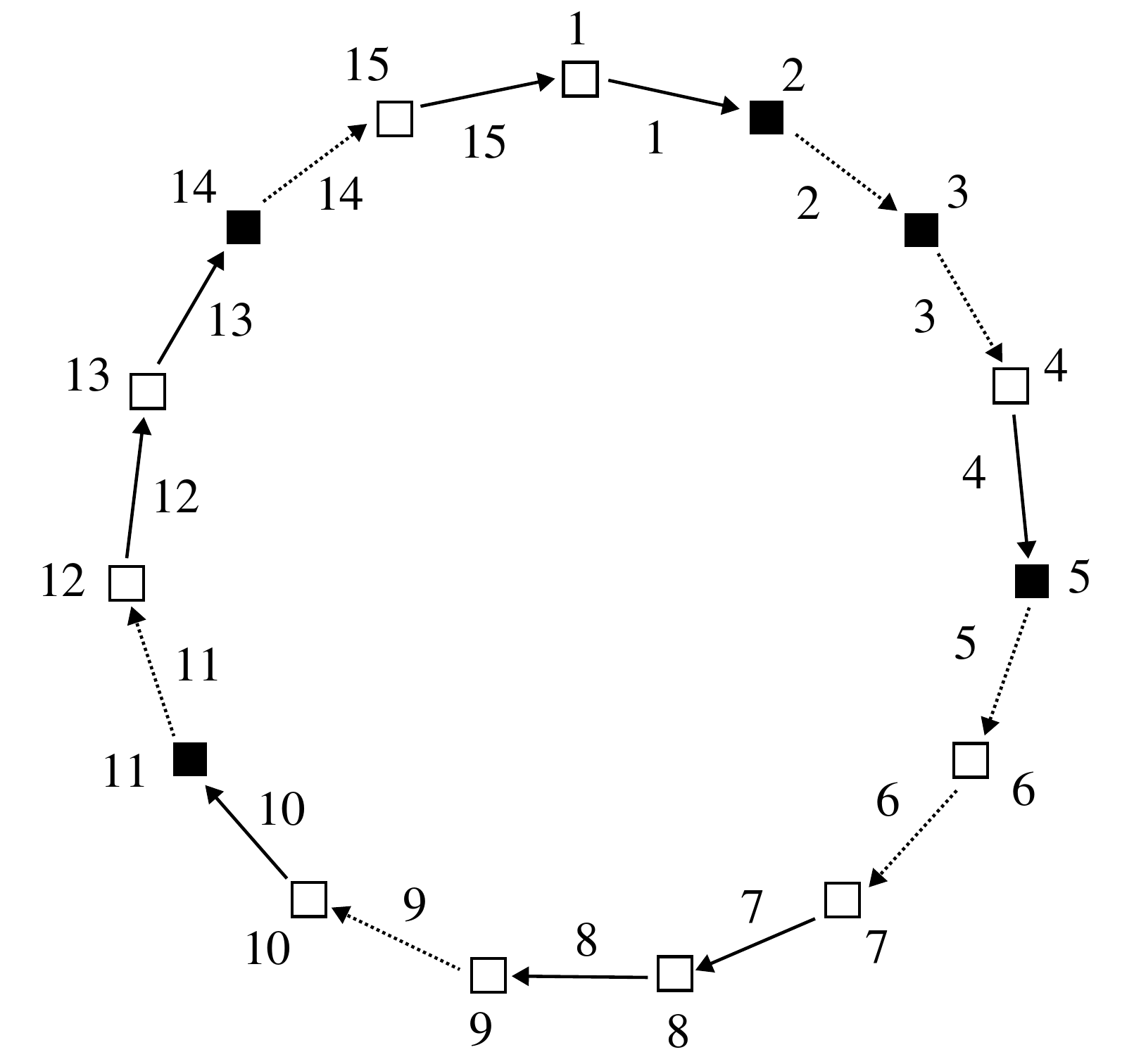}
    \end{center}
    \vspace{-1em}
    \caption{A valid configuration of $S = \{1,4, 7, 8, 10, 12,13, 15\}$ and $T = \{2, 3, 5, 11, 14\}$ subsets of edge and vertex sets, respectively, of an oriented cycle, as in the proof of Proposition~\ref{prop:q-gf}. Solid arrows represent edges in $S$ and dotted arrows edges not in $S$, and filled boxes represent vertices in $T$ and empty boxes vertices not in $T$.
      Vertices are labelled outside the cycle and edges inside the cycle.}
\end{figure}

Finally, we are ready to compute the generating function of the $s_k(x)$ and complete the first phase of our plan.
\begin{proposition}[$s_k$ generating function]
    As a formal power series,
    \begin{align}
      S(x, t) \colonequals \sum_{k \geq 1} s_k(x) t^k = Q\left(\frac{1}{\beta} - 1, -\alpha t, -\frac{1}{\alpha}, \frac{x}{\beta} - 1\right) + \frac{(1 - \beta)t}{1 - \alpha t} - \frac{\beta (1 - \beta)t}{\beta + \alpha(1 - \beta)t}.
    \end{align}
\end{proposition}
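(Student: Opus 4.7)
The plan is a direct substitution: split $s_k(x)$ into its two defining pieces, recognize the first piece as a specialization of $Q$, and evaluate the second piece as an elementary rational function of $t$ via geometric series.

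First, I would write
\begin{align*}
    S(x, t)
    &= \sum_{k \geq 1} t^k \sum_{a = 0}^{k} \left(\frac{x}{\beta} - 1\right)^{a} \sum_{j = 1}^{k} (-\alpha)^{k - j} q_{k, j, a}(\beta^{-1} - 1) \\
    &\hspace{1cm} - \sum_{k \geq 1} t^k \alpha^{k - 1} \sum_{b = 1}^{k - 1}(1 - \beta^{-1})^b.
\end{align*}
For the first summand, I would use the fact that $q_{k, j, a} = 0$ whenever $a > j$ (since the inner sum of Definition~\ref{def:q-poly} runs over $A \in \binom{[j]}{a}$) and also $q_{k, 0, 0} = 0$ by convention, so that extending the index ranges freely causes no change. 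Then, substituting $w = \beta^{-1} - 1$, $x \mapsto -\alpha t$, $y = -\alpha^{-1}$, and $z = \frac{x}{\beta} - 1$ into the definition of $Q$ in Proposition~\ref{prop:q-gf}, and using $(-\alpha t)^k(-\alpha^{-1})^j = (-1)^{k + j}\alpha^{k - j}t^k = (-\alpha)^{k - j}t^k$, yields exactly the first summand.

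For the second summand, write $\gamma \colonequals 1 - \beta^{-1}$, so that $\sum_{b = 1}^{k - 1} \gamma^b = \frac{\gamma - \gamma^k}{1 - \gamma}$ for $k \geq 1$. Then
\begin{align*}
    \sum_{k \geq 1} t^k \alpha^{k - 1} \sum_{b = 1}^{k - 1}(1 - \beta^{-1})^b
    &= \frac{1}{\alpha(1 - \gamma)} \sum_{k \geq 1} (\alpha t)^k (\gamma - \gamma^k) \\
    &= \frac{\gamma}{1 - \gamma}\left(\frac{t}{1 - \alpha t} - \frac{t}{1 - \alpha \gamma t}\right),
\end{align*}
and substituting $\frac{\gamma}{1 - \gamma} = -(1 - \beta)$ and $1 - \alpha\gamma t = (\beta + \alpha(1 - \beta)t)/\beta$ yields the remaining two terms on the right-hand side of the claim (with the correct sign coming from the minus sign above).

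There is essentially no obstacle here; the statement is purely bookkeeping once Proposition~\ref{prop:q-gf} is in hand. The only care needed is in tracking the signs introduced by the substitutions $x \mapsto -\alpha t$ and $y \mapsto -\alpha^{-1}$ into $Q$, and in the elementary geometric series manipulation for the second summand.
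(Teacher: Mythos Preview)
Your proposal is correct and follows essentially the same approach as the paper: both split $s_k$ into its two defining pieces, identify the first with $Q$ at the substituted arguments (after extending the index ranges using the vanishing of $q_{k,j,a}$ for $a>j$ and $q_{k,0,0}=0$), and evaluate the second as a geometric series. Your write-up is, if anything, slightly more explicit in tracking the sign from $(-\alpha t)^k(-\alpha^{-1})^j=(-\alpha)^{k-j}t^k$ and in the $\gamma$ substitution.
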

\begin{proof}
    Noting that $q_{k, k, a}(x) = \One\{a = k\}$, we may slightly extend the limits of the summations inside $s_k(x)$ and write
    \begin{align*}
      S(x, t)
      &= \sum_{k \geq 1} \sum_{j = 0}^k \sum_{a = 0}^{j} t^k\left(\frac{x}{\beta} - 1\right)^{a} (-\alpha)^{k - j} q_{k, j, a}(\beta^{-1} - 1) - \sum_{k \geq 1}\alpha^{k - 1}\left(\sum_{b = 1}^{k - 1}(1 - \beta^{-1})^b\right) t^k \\
      &= Q\left(\frac{1}{\beta} - 1, -\alpha t, -\frac{1}{\alpha}, \frac{x}{\beta} - 1\right) - \sum_{k \geq 1}\alpha^{k - 1} \beta((1 - \beta^{-1}) - (1 - \beta^{-1})^k) t^k \\
      &= Q\left(\frac{1}{\beta} - 1, -\alpha t, -\frac{1}{\alpha}, \frac{x}{\beta} - 1\right) + \frac{(1 - \beta)t}{1 - \alpha t} - \frac{\beta (1 - \beta)t}{\beta + \alpha(1 - \beta)t}, \numberthis
    \end{align*}
    giving the result.
\end{proof}

The following is obtained by substituting the reparametrization described in Appendix~\ref{app:manova-param} into one of the results of \cite{DE-2015-InfiniteRandomMatrixTheory}.
\begin{proposition}[Table 7 of \cite{DE-2015-InfiniteRandomMatrixTheory}]
    \label{prop:de-orth-poly}
    Suppose $\alpha \leq \min\{\beta, 1 - \beta\}$.
    The sequence of polynomials $f_n(x)$ defined by $f_0(x) = 1$ and, for $n \geq 1$,
    \begin{align*}
      f_n(x) &= (x - \beta) \left(\sqrt{\alpha(1 - \beta)\beta(1 - \alpha)}\right)^{n - 1}U_{n - 1}\left(\frac{x - \alpha(1 - \beta) - \beta(1 - \alpha)}{2\sqrt{\alpha(1 - \beta)\beta(1 - \alpha)}}\right) \\
             &\hspace{0.7cm}- \frac{1}{1 - \alpha}\left(\sqrt{\alpha(1 - \beta)\beta(1 - \alpha)}\right)^{n} U_{n - 2}\left(\frac{x - \alpha(1 - \beta) - \beta(1 - \alpha)}{2\sqrt{\alpha(1 - \beta)\beta(1 - \alpha)}}\right) \numberthis
    \end{align*}
    form a sequence of monic orthogonal polynomials for $\DE(\alpha, \beta)$.
    Here $U_n(x)$ are the Chebyshev polynomials of the second kind, given recursively by $U_{-1}(x) = 0$, $U_0(x) = 1$, and $U_n(x) = 2xU_{n - 1}(x) - U_{n - 2}(x)$ for $n \geq 1$.
\end{proposition}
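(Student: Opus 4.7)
The plan is to verify the two defining properties of the $f_n$: monicity of degree $n$, and orthogonality with respect to $\DE(\alpha, \beta)$. Set $c \colonequals \alpha(1-\beta) + \beta(1-\alpha)$ and $s \colonequals \sqrt{\alpha(1-\beta)\beta(1-\alpha)}$, so that the support of $\DE(\alpha, \beta)$ is $[r_-, r_+] = [c - 2s, c + 2s]$ and the affine substitution $y = (x - c)/(2s)$ maps it onto $[-1, 1]$. Monicity is then immediate from $U_{n-1}(y) = 2^{n-1} y^{n-1} + O(y^{n-2})$: the factor $s^{n-1}$ exactly cancels the $(2s)^{-(n-1)}$ coming from rescaling the argument, so the first term of $f_n$ contributes leading coefficient $1 \cdot x^n$, while the second term has degree $n - 2$.

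For orthogonality I would recognize the situation as an instance of the Bernstein-Szeg\H{o} construction. Pushing forward the density of $\DE(\alpha, \beta)$ under $y = (x - c)/(2s)$ and using $\sqrt{(r_+ - x)(x - r_-)} = 2s\sqrt{1 - y^2}$ gives
\[
 \frac{2 s^2}{\pi \alpha} \cdot \frac{\sqrt{1 - y^2}}{\rho(y)} \, dy \quad \text{on } [-1, 1],
\]
where $\rho(y) = x(1-x) = (c + 2sy)(1 - c - 2sy)$ is a quadratic that is strictly positive on $[-1, 1]$ because the assumption $\alpha \leq \min\{\beta, 1-\beta\}$ forces $r_- \geq 0$ and $r_+ \leq 1$. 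The Bernstein-Szeg\H{o} theorem then asserts that, for $n \geq 2$, the monic orthogonal polynomials in $y$ take the form $(y - \mu_n) U_{n-1}(y) - \nu_n U_{n-2}(y)$ (up to a scalar) with $\mu_n, \nu_n$ determined by the factorization $\rho(2\cos\theta) = |h(e^{i\theta})|^2$ for a quadratic $h$ with both roots outside the unit disk. I would carry out this factorization explicitly using the two roots $-c/(2s)$ and $(1-c)/(2s)$ of $\rho$, read off $\mu_n$ and $\nu_n$, and translate back to the $x$ variable via $x = c + 2sy$ to match the stated formula for $f_n$.

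The main obstacle I expect is the algebraic identification step: one must verify that the linear factor $y - \mu_n$ pulls back to $x - \beta$ under $x = c + 2sy$, i.e.\ that $\mu_n = (\beta - c)/(2s)$, and that the other constant reduces to $\frac{1}{1-\alpha} \cdot s$ with the correct normalization. A cleaner alternative, avoiding Bernstein-Szeg\H{o}, is to verify orthogonality directly: it suffices to show $\int f_n(x) x^k \, d\DE(\alpha, \beta)(x) = 0$ for $0 \leq k < n$. After the change of variable, expanding $x^k$ as a linear combination of $U_0(y), \dots, U_k(y)$ and writing $1/\rho(y) = A/(y - \xi_1) + B/(y - \xi_2)$ by partial fractions reduces everything to the closed-form integrals
\[
 \int_{-1}^1 U_j(y) U_\ell(y) \sqrt{1 - y^2} \, dy = \tfrac{\pi}{2}\, \delta_{j\ell}, \qquad \int_{-1}^1 U_j(y) \frac{\sqrt{1 - y^2}}{y - \xi} \, dy = -\pi\, \xi_*^{-(j+1)},
\]
where $\xi_* = \xi - \sgn(\xi)\sqrt{\xi^2 - 1}$. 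Summing the two residue contributions and simplifying using $r_\pm = c \pm 2s$ should yield the claimed vanishing. The cases $n = 0, 1$ must be handled separately; in particular, orthogonality of $f_1(x) = x - \beta$ against $f_0 = 1$ reduces to the fact that the mean of $\DE(\alpha, \beta)$ is $\beta$, which follows from $\MANOVA(\alpha, \beta) = \Ber(\alpha) \boxtimes \Ber(\beta)$ having mean $\alpha\beta$ together with conditioning on $X > 0$.
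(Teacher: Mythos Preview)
The paper does not prove this proposition at all: it is stated as a direct citation of Table~7 in \cite{DE-2015-InfiniteRandomMatrixTheory}, with the only work being the reparametrization from the $(a,b)$ variables of that reference to the $(\alpha,\beta)$ variables used here (spelled out in Appendix~\ref{app:manova-param}). So your proposal is not a comparison against a proof in the paper---you are supplying a proof where the paper simply invokes the literature.

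That said, your sketch is a reasonable way to verify the result from scratch. The monicity check is correct. The Bernstein--Szeg\H{o} route is the right framework for weights of the form $\sqrt{1-y^2}/\rho(y)$, though you should be aware that when $\alpha = \beta$ one has $r_- = 0$, so $\rho$ vanishes at the left endpoint $y = -1$ and the standard hypothesis of strict positivity on $[-1,1]$ fails; the weight is still integrable, but you would need to either invoke a version of Bernstein--Szeg\H{o} allowing this degeneracy or treat that case separately. Your alternative direct-integration approach via partial fractions and the closed-form integrals $\int U_j(y)\sqrt{1-y^2}/(y-\xi)\,dy$ avoids this issue and is probably cleaner, though the algebra in collapsing the two residue contributions is not trivial. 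Either route is more than the paper itself does, which is simply to take the formula from \cite{DE-2015-InfiniteRandomMatrixTheory} and translate parameters.
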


\begin{proposition}[Orthogonal polynomial generating function]
    \label{prop:de-gf}
    The $f_n(x)$ from Proposition~\ref{prop:de-orth-poly} admit the ordinary generating function
    \begin{equation}
        F(x, t) \colonequals \sum_{n = 0}^{\infty} f_n(x) t^n = \frac{1 + \alpha(1 - 2\beta)t - \alpha^2\beta(1 - \beta)t^2}{1 - t(x - \alpha(1 - \beta) - \beta(1 - \alpha)) + \alpha(1 - \beta)\beta(1 - \alpha)t^2}.
    \end{equation}
\end{proposition}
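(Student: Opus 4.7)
The statement is a generating-function identity for a concrete family of polynomials defined through Chebyshev polynomials of the second kind, so the proof should reduce to a direct calculation using the classical generating function
\[
\sum_{n \geq 0} U_n(y) s^n = \frac{1}{1 - 2ys + s^2}.
\]
My plan is to plug the formula from Proposition~\ref{prop:de-orth-poly} into the definition of $F(x,t)$ and reduce it to two instances of this Chebyshev generating function with a common substitution.

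To set notation, write $c \colonequals \sqrt{\alpha(1-\beta)\beta(1-\alpha)}$ and $y \colonequals (x - \alpha(1-\beta) - \beta(1-\alpha))/(2c)$, so that
\[
f_n(x) = (x - \beta) c^{n-1} U_{n-1}(y) - \frac{1}{1 - \alpha} c^n U_{n-2}(y) \qquad (n \geq 1),
\]
with the convention $U_{-1}(y) = 0$, $U_0(y) = 1$, and $f_0(x) = 1$. I would then split $F(x,t) = 1 + S_1 + S_2$, where $S_1$ collects the $(x-\beta)c^{n-1}U_{n-1}(y)t^n$ terms and $S_2$ collects the $-\frac{1}{1-\alpha}c^n U_{n-2}(y) t^n$ terms. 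Reindexing gives
\[
S_1 = (x - \beta) t \sum_{m \geq 0} U_m(y)(ct)^m, \qquad S_2 = -\frac{c^2 t^2}{1 - \alpha} \sum_{m \geq 0} U_m(y)(ct)^m,
\]
and the Chebyshev generating function, with $2y(ct) = x - \alpha(1-\beta) - \beta(1-\alpha)$ and $(ct)^2 = \alpha(1-\beta)\beta(1-\alpha) t^2$, yields that both sums share the common denominator
\[
D(x,t) \colonequals 1 - \bigl(x - \alpha(1-\beta) - \beta(1-\alpha)\bigr)t + \alpha(1-\beta)\beta(1-\alpha)\, t^2.
\]

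Combining $1 + S_1 + S_2$ over this common denominator produces a numerator
\[
D(x,t) + (x - \beta) t - \frac{\alpha(1-\beta)\beta(1-\alpha)}{1 - \alpha}\, t^2,
\]
which I would then simplify. The coefficient of $t$ collapses as $-(x - \alpha(1-\beta) - \beta(1-\alpha)) + (x - \beta) = \alpha(1 - 2\beta)$, and the coefficient of $t^2$ collapses as $\alpha(1-\beta)\beta(1-\alpha) - \alpha(1-\beta)\beta(1-\alpha)/(1-\alpha) = -\alpha^2\beta(1 - \beta)$, leaving precisely $1 + \alpha(1 - 2\beta)t - \alpha^2\beta(1-\beta)t^2$, which matches the claimed numerator.

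There is no real obstacle here beyond careful bookkeeping: the hardest step is verifying the two coefficient simplifications above, both of which are a few lines of elementary algebra. The only conceptual point worth emphasizing is that the factor $c^{n-1}$ vs.\ $c^n$ in front of the two Chebyshev terms, combined with the shift of index in $U_{n-1}$ vs.\ $U_{n-2}$, is exactly what makes both sums produce the same denominator $D(x,t)$ after the substitution $s = ct$, so that they combine cleanly.
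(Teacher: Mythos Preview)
Your proof is correct and follows essentially the same route as the paper: both invoke the Chebyshev generating function $\sum_{n\geq 0} U_n(y)s^n = (1-2ys+s^2)^{-1}$ with the substitution $s = ct$, split $F(x,t)$ into the $f_0=1$ term plus the two Chebyshev contributions, and combine over the common denominator $D(x,t)$. You actually go slightly further than the paper in writing out the coefficient simplifications for the numerator explicitly, whereas the paper leaves that final step as ``upon substituting back for $\sigma$ and rewriting.''
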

\begin{proof}
    We recall the standard fact (see, e.g., 22.9.10 of \cite{AS-1964-HandbookMathematicalFunctions}) that the Chebyshev polynomials of the second kind admit the ordinary generating function
    \begin{equation}
        \label{eq:cheb-gf}
        \sum_{n = 0}^{\infty} U_n(x) t^n = \frac{1}{1 - 2tx + t^2}.
    \end{equation}
    For the sake of brevity, let us write $\sigma \colonequals \sqrt{\alpha(1 - \alpha)\beta(1 - \beta)}$.
    We have
    \begin{align*}
      F(x, t)
      &= 1 + (x - \beta)t\sum_{n = 1}^{\infty}\sigma^{n - 1}U_{n - 1}\left(\frac{x - \alpha(1 - \beta) - \beta(1 - \alpha)}{2\sigma}\right)t^{n - 1} \\
      &\hspace{0.75cm} - \frac{\sigma^2}{1 - \alpha}t^2\sum_{n = 1}^{\infty} \sigma^{n - 2} U_{n - 2}\left(\frac{x - \alpha(1 - \beta) - \beta(1 - \alpha)}{2\sigma}\right)t^{n - 2} \\
      &= 1 + \left((x - \beta)t - \alpha\beta(1 - \beta)t^2\right)\sum_{n = 0}^{\infty} U_{n}\left(\frac{x - \alpha(1 - \beta) - \beta(1 - \alpha)}{2\sigma}\right) \left(\sigma t\right)^n \\
      &= 1 + \frac{(x - \beta)t - \alpha\beta(1 - \beta)t^2}{1 - t(x - \alpha(1 - \beta) - \beta(1 - \alpha)) + \sigma^2 t^2} \numberthis
    \end{align*}
    the last step following by \eqref{eq:cheb-gf}, and the result follows upon substituting back for $\sigma$ and rewriting.
\end{proof}

\begin{proof}[Proof of Lemma~\ref{lem:manova-recursion} when $\alpha \leq \min\{\beta, 1 - \beta\}$]
    Following our reasoning earlier, it suffices to show that
    \begin{equation}
        \Ex_{X \sim \DE(\alpha, \beta)} s_k(X) = 0
    \end{equation}
    for each $k \geq 1$.
    The key observation is the following relation of generating functions:
    \begin{equation}
        \label{eq:orth-poly-exp}
        S(x, t) = \left(F\left(x, \frac{t}{\beta}\right) - 1\right)\frac{\beta -\alpha(1 - \alpha)(1 - \beta)t^2}{\beta + \alpha(1 - 2\beta)t - \alpha^2(1 - \beta)t^2},
    \end{equation}
    which may be verified symbolically.
    Since $\EE_{X \sim \DE(\alpha, \beta)} F(X, t) = 1$ for all $t$ by the orthogonality of the $f_n(x)$, we then have $\EE_{X \sim \DE(\alpha, \beta)} S(X, t) = 0$, and expanding this as a power series in $t$ gives the claim.
\end{proof}

\begin{remark}
    This proof of course appears rather opaque.
    In discovering it, we first empirically discovered the orthogonal polynomial expansion that is encoded in \eqref{eq:orth-poly-exp}, then found the identity of generating functions that it would imply, and finally verified that identity algebraically.
\end{remark}

\section{Asymptotic liberation in Kesten-McKay case}
\label{app:asymp-lib}

In this appendix, we sketch how the results of \cite{AF-2014-AsymptoticallyLiberatingUnitaryMatrices} relate to the special case $\beta = \frac{1}{2}$ of our Theorem~\ref{thm:untf} (and thus also a variation of the main result of \cite{MMP-2019-RandomSubensembles}).
The general idea of \cite{AF-2014-AsymptoticallyLiberatingUnitaryMatrices} is to consider, in the setting of Theorem~\ref{thm:invariant-freeness}, what ``less random'' matrices than Haar-distributed unitaries can achieve ``asymptotic liberation,'' creating asymptotic freeness when they conjugate another sequence of matrices.
One version of their main result is the following.

\begin{theorem}[Corollary 3.7 of \cite{AF-2014-AsymptoticallyLiberatingUnitaryMatrices}]
    \label{thm:asymp-lib}
    Let $N = N(\iota)$ be an increasing sequence.
    Let $\bQ^{(\iota)} \in \sU(N)$ have $\sqrt{N} \cdot \max_{ij} |Q_{ij}^{(\iota)}|$ bounded as $\iota \to \infty$.
    Let $\bP^{(\iota)} \in \sU(N)$ be a uniformly random permutation matrix and $\bD^{(\iota)} \in \sU(N)$ be a diagonal matrix with i.i.d.\ diagonal entries $D_{ii}^{(\iota)} \sim \Unif(\{\pm 1\})$.
    Let $\bA^{(\iota)}, \bC^{(\iota)} \in \CC^{N \times N}$ be sequences of random diagonal matrices so that the sequences $(A^{(\iota)}_{ii}, B^{(\iota)}_{ii})$ are i.i.d.\ over all $\iota, i$ with bounded real-valued marginal laws.
    Then, the sequences
    \begin{equation}
        \bA^{(\iota)} \,\,\, \text{ and } \,\,\, \big(\bD^{(\iota)}\bP^{(\iota)}\bQ^{(\iota)}\bP^{(\iota)^*}\bD^{(\iota)}\big) \bC^{(\iota)} \big(\bD^{(\iota)}\bP^{(\iota)}\bQ^{(\iota)^*}\bP^{(\iota)^*}\bD^{(\iota)}\big)
    \end{equation}
    are asymptotically free.
\end{theorem}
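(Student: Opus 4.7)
The plan is to verify asymptotic freeness directly from the moment definition, via two algebraic simplifications (elimination of $\bD^{(\iota)}$ and of $\bP^{(\iota)}$) followed by a delocalization-based bound on the remaining $\bQ^{(\iota)}$-moments. To begin, I would translate the statement into the requirement that, for every $k \geq 1$ and every tuple of polynomials $p_a, q_a$ with $\frac{1}{N}\EE\Tr p_a(\bA^{(\iota)}) \to 0$ and $\frac{1}{N}\EE\Tr q_a(\bM^{(\iota)}) \to 0$ (where $\bM^{(\iota)} \colonequals \bU^{(\iota)} \bC^{(\iota)} \bU^{(\iota)*}$ with $\bU^{(\iota)} \colonequals \bD\bP\bQ\bP^*\bD$), the alternating trace $\frac{1}{N}\EE\Tr[p_1(\bA^{(\iota)}) q_1(\bM^{(\iota)}) \cdots p_k(\bA^{(\iota)}) q_k(\bM^{(\iota)})]$ tends to $0$. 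Setting $\widetilde{\bA}_a \colonequals p_a(\bA^{(\iota)})$ and $\widetilde{\bC}_a \colonequals q_a(\bC^{(\iota)})$, these remain diagonal matrices with i.i.d.\ bounded entries whose marginals satisfy $\EE\widetilde{A}_a(1) = \EE\widetilde{C}_a(1) = 0$ (by the law of large numbers applied to each entry sequence).

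Next I would exploit commutativity to eliminate $\bD$: since $\bD^2 = \bI$ and $\bD$ commutes with every diagonal matrix, $\bU\widetilde{\bC}_a\bU^* = \bD(\bP\bQ\bP^*\widetilde{\bC}_a\bP\bQ^*\bP^*)\bD$, and the $\bD\widetilde{\bA}_{a+1}\bD = \widetilde{\bA}_{a+1}$ factors sandwiched between consecutive blocks kill interior $\bD$-pairs while cyclicity of trace removes the outermost pair. To eliminate $\bP$, I would use $\Tr[Y] = \Tr[\bP^* Y \bP]$ and insert $\bP\bP^* = \bI$ between all consecutive operators; using $\bP^*(\bP\bQ\bP^*)\bP = \bQ$, the trace rewrites as
\[
\EE\Tr\big[\widetilde{\bA}_1'\bQ\widetilde{\bC}_1'\bQ^* \widetilde{\bA}_2'\bQ\widetilde{\bC}_2'\bQ^* \cdots \widetilde{\bA}_k'\bQ\widetilde{\bC}_k'\bQ^*\big]
\]
with $\widetilde{\bA}_a' \colonequals \bP^*\widetilde{\bA}_a\bP$ and $\widetilde{\bC}_a' \colonequals \bP^*\widetilde{\bC}_a\bP$ diagonal with entries that are simply permutations of the original ones. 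Since the joint pairs $(A_{ii}^{(\iota)}, C_{ii}^{(\iota)})$ are i.i.d.\ across $i$, applying any deterministic permutation preserves the joint distribution of the entries, so the expected trace is unchanged upon dropping the primes, and the problem reduces to showing
\[
\frac{1}{N}\EE\Tr\big[\widetilde{\bA}_1\bQ\widetilde{\bC}_1\bQ^* \cdots \widetilde{\bA}_k\bQ\widetilde{\bC}_k\bQ^*\big] \to 0
\]
for $\bQ = \bQ^{(\iota)}$ deterministic with $\sqrt{N} \max_{ij} |Q_{ij}|$ bounded.

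To conclude, I would expand the reduced trace as the cyclic sum
\[
\sum_{\bi, \bj \in [N]^k} \EE\Big[\prod_a\widetilde{A}_a(i_a)\prod_a\widetilde{C}_a(j_a)\Big] \prod_a Q_{i_a, j_a}\overline{Q_{i_{a+1}, j_a}}
\]
(indices cyclic with $i_{k+1} \equiv i_1$), then observe that, since entries at distinct diagonal positions are independent, the joint expectation factors over equivalence classes of the partition $\pi$ of $[k] \sqcup [k]'$ encoding the collisions $i_a = i_b$, $j_a = j_b$, and $i_a = j_b$. The centering conditions zero out any $\pi$ with a singleton block, leaving only partitions all of whose blocks have size $\geq 2$; the remaining expectations are bounded by a $k$-dependent constant thanks to the bounded marginals. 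The delocalization bound yields $\big|\prod_a Q_{i_a, j_a}\overline{Q_{i_{a+1}, j_a}}\big| \lesssim N^{-k}$ uniformly, and each partition admits at most $N^{|\pi|}$ compatible index assignments. Since every block of $\pi$ has size $\geq 2$, $|\pi| \leq k$, so each admissible partition contributes $O(1)$ to $\EE\Tr$, and summing over the finitely many admissible partitions gives $\frac{1}{N}\EE\Tr = O(1/N) \to 0$.

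The main obstacle is the combinatorial bookkeeping in this final step: the bound is tight exactly on pair partitions $|\pi| = k$, which are precisely the terms that, in the Haar-unitary case, reassemble into the non-crossing pair partitions governing free independence. Distinguishing the surviving contributions from the suppressed ones would be essential if one wanted to recover the full joint limit moments, but is unnecessary for the mere vanishing of the alternating trace that asymptotic freeness requires. A smaller technical point is to verify carefully that the $\bP$-reduction step preserves expectations, which is precisely where the exchangeability guaranteed by the i.i.d.\ structure of the diagonal entries of $\bA^{(\iota)}$ and $\bC^{(\iota)}$ is genuinely used.
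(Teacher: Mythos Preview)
The paper does not prove this theorem at all; it merely cites it as Corollary~3.7 of Anderson--Farrell (2014) and then applies it in Appendix~\ref{app:asymp-lib}. So there is no ``paper's own proof'' to compare your attempt against.

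Your proposed argument is nonetheless correct, and it is essentially a self-contained specialization of the Anderson--Farrell machinery to this particular corollary. The two algebraic reductions are valid: since all of $\bD$, $\widetilde{\bA}_a$, $\widetilde{\bC}_a$ are diagonal and $\bD^2=\bI$, the sign matrix $\bD$ genuinely drops out of the trace; and since the pairs $(A_{ii},C_{ii})$ are i.i.d.\ and independent of $\bP$, conjugating by any fixed permutation leaves the expectation invariant, so $\bP$ drops out as well. The final combinatorial step---expanding the trace, grouping by the coincidence partition of $\{i_1,\dots,i_k,j_1,\dots,j_k\}$, killing singleton blocks by centering, and bounding the remainder via $|Q_{ij}|\lesssim N^{-1/2}$---gives the $O(1/N)$ bound you claim. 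Together with the (trivial, since the marginal traces are exactly constant in $\iota$) convergence of the individual laws of $\bA^{(\iota)}$ and $\bM^{(\iota)}$, this yields asymptotic freeness.

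One comment: your argument actually shows that in this i.i.d.\ diagonal setting the randomization by $\bD$ and $\bP$ is \emph{redundant}---you could replace $\bD\bP\bQ\bP^*\bD$ by the deterministic $\bQ$ alone. This is stronger than the cited corollary as stated. In Anderson--Farrell's general framework, $\bD$ and $\bP$ are needed because they allow arbitrary (possibly deterministic, non-diagonal) $\bA,\bC$; the randomization supplied there by $\bD,\bP$ is here supplied instead by the i.i.d.\ structure of $\bA,\bC$. Minor quibble: your parenthetical ``by the law of large numbers'' is really just linearity of expectation---$\frac{1}{N}\EE\Tr p_a(\bA^{(\iota)})=\EE[p_a(A_{11})]$ is constant in $\iota$, so its vanishing in the limit means it vanishes exactly.
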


The relevance of this to our setting is as follows.
Consider $\bA = \bC$ with i.i.d.\ diagonal entries distributed as $\Ber(\alpha)$ (let us drop the $\iota$ superscript for the sake of brevity).
We then may compute as a free multiplicative convolution the law of
\begin{align*}
  \bA^{1/2}\big(\bD\bP\bQ\bP^*\bD\big) \bA \big(\bD\bP\bQ^*\bP^*\bD\big)\bA^{1/2}
  &= \bA\big(\bD\bP\bQ\bP^*\bD\big) \bA \big(\bD\bP\bQ^*\bP^*\bD\big)\bA \\
  &= \bA\bD\bP\bQ(\bP^*\bA\bP) \bQ^*\bP^*\bD\bA \\
  &= (\bA\bD\bP\bQ(\bP^*\bA\bP)) ((\bP^*\bA\bP)\bD\bP\bQ\bA)^*, \numberthis
\end{align*}
which has the same non-zero spectrum as
\begin{align*}
  &\hspace{-1cm} (\bA\bD\bP\bQ(\bP^*\bA\bP))^*(\bA\bD\bP\bQ(\bP^*\bA\bP)) \\
  &= (\bP^*\bA\bP)\bQ^*\bP^*\bD\bA^2\bD\bP\bQ(\bP^*\bA\bP) \\
  &= (\bP^*\bA\bP)\bQ^*(\bP^*\bA\bP)\bQ(\bP^*\bA\bP) \\
  &= ((\bP^*\bA\bP)\bQ(\bP^*\bA\bP))^*((\bP^*\bA\bP)\bQ(\bP^*\bA\bP)), \numberthis
\end{align*}
which, since the diagonal entries of $\bA$ are i.i.d., has the same law as simply
\begin{equation}
    \bA\bQ^*\bA\bQ\bA = (\bA\bQ\bA)^*(\bA\bQ\bA).
\end{equation}
Thus, this result of \cite{AF-2014-AsymptoticallyLiberatingUnitaryMatrices} allows us to compute the limiting \emph{singular value} distribution of a random submatrix of any sequence of sufficiently incoherent (in the sense of having small off-diagonal entries) unitary matrices $\bQ$.
This law will be the free multiplicative convolution $\Ber(\alpha) \boxtimes \Ber(\alpha) = \MANOVA(\alpha, \alpha)$.

To relate this to our setting, note that, for $\bB$ an orthogonal projection, $\bQ \colonequals 2\bB - \bm I_N$ is a unitary matrix---it is the reflection across the hyperplane to which $\bB$ projects.
When the assumption of Theorem~\ref{thm:untf} holds with $\beta = \frac{1}{2}$, then Theorem~\ref{thm:asymp-lib} applies to this $\bQ$.
The above then lets us compute the limiting law of the squared eigenvalues of $\bA(2\bB - \bm I_N)\bA = 2\bA\bB\bA - \bA$ as $\MANOVA(\alpha, \alpha)$.
Then, a version of Theorem~\ref{thm:untf} for convergence in moments of the limiting empirical distribution of a ``folding'' of the eigenvalues of $\bA\bB\bA$ under the map $x \mapsto (2x - 1)^2$ follows from the following curious distributional identity, along with accounting for the masses of the atoms on 0 and 1 of the two MANOVA laws.

\begin{proposition}
    Let $\alpha \in (0, 1)$ and $X \sim \MANOVA(\frac{1}{2}, \alpha)$ conditioned to equal neither 0 nor 1.
    Then, $(2X - 1)^2$ has the law $\MANOVA(\alpha, \alpha)$ conditioned to equal neither 0 nor 1.
\end{proposition}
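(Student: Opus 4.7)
\medskip

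\noindent\textbf{Proof proposal.} The plan is a direct change-of-variables computation between the two densities of Definition~\ref{def:manova}. Since conditioning on $X \notin \{0,1\}$ amounts to restricting and renormalizing to the absolutely continuous part of the MANOVA law, I will work entirely on the continuous part and separately check that the total masses agree so that the renormalizations match.

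First I would compute the spectral edges. By Definition~\ref{def:manova}, $\MANOVA(\frac{1}{2}, \alpha)$ has continuous support $[r_-, r_+]$ with $r_\pm = \frac{1}{2} \pm \sqrt{\alpha(1-\alpha)}$, so that $(r_+ - x)(x - r_-) = \alpha(1-\alpha) - (x - \frac{1}{2})^2$, and $\MANOVA(\alpha,\alpha)$ has continuous support $[0, 4\alpha(1-\alpha)]$. Under $Y = (2X-1)^2$ the support of the continuous part of $X$ maps onto exactly $[0, 4\alpha(1-\alpha)]$, so this is already encouraging.

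Next I would push forward the density. Writing $U = 2X-1$, the continuous density of $X$ is invariant under $x \mapsto 1-x$, hence the density of $U$ is symmetric on $[-2\sqrt{\alpha(1-\alpha)}, 2\sqrt{\alpha(1-\alpha)}]$. Using the standard change-of-variables formula $f_Y(y) = f_U(\sqrt{y})/\sqrt{y}$ for $y > 0$, and the identities $(X - \tfrac{1}{2})^2 = Y/4$ and $X(1-X) = \frac{1}{4} - Y/4 = (1 - Y)/4$, a short calculation gives
\begin{equation}
    f_Y(y) \;=\; \frac{\sqrt{4\alpha(1-\alpha) - y}}{2\pi\sqrt{y}\,(1-y)},
\end{equation}
which matches $\sqrt{(r_+(\alpha,\alpha) - y)(y - r_-(\alpha,\alpha))}/(2\pi y(1-y))$ after writing $\sqrt{y(4\alpha(1-\alpha) - y)} = \sqrt{y}\sqrt{4\alpha(1-\alpha)-y}$.

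Finally, I would verify that the two conditional laws have the same normalization, i.e.\ that the absolutely continuous parts of $\MANOVA(\frac{1}{2},\alpha)$ and $\MANOVA(\alpha,\alpha)$ carry the same total mass. From Definition~\ref{def:manova}, for $\alpha \leq \frac{1}{2}$ both laws have continuous mass $\alpha$, and for $\alpha \geq \frac{1}{2}$ both have continuous mass $1-\alpha$; in either case the masses agree. This, combined with the density identity above, shows that the conditional law of $(2X-1)^2$ is $\MANOVA(\alpha,\alpha)$ conditioned on being neither $0$ nor $1$. The only step requiring a bit of care is the bookkeeping of atoms at $0$ and $1$ in the two MANOVA laws when $\alpha$ crosses $\frac{1}{2}$, but this is a routine case check rather than a real obstacle, so no step here is genuinely hard.
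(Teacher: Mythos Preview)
Your proposal is correct; the change-of-variables and the mass bookkeeping both go through exactly as you describe. The paper itself does not spell out a proof of this proposition: it simply states the identity, calls it ``curious'' and ``surprising,'' and remarks that one must ``account for the masses of the atoms on 0 and 1 of the two MANOVA laws,'' which is precisely your normalization check. So your direct density computation is the natural verification the paper leaves implicit.
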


We find this identity quite surprising, since, appealing to the random matrix interpretation of the MANOVA laws, \emph{a priori} there seems to be no reason for the laws of singular values of a $\frac{1}{2}N \times \alpha N$ and a square $\alpha N \times \alpha N$ random submatrix of a Haar-distributed unitary matrix to be related.
It would be interesting to understand if there is a more intuitive free probability explanation of this, and whether there are larger families of distributional identities among MANOVA laws.

\section{Combinatorial tools}

\subsection{Stirling number bounds}
In this appendix, we prove some combinatorial bounds used in the proof of Theorem~\ref{thm:edge-km-unitary} in the main text.
These involve sums over partitions or permutations, which are conveniently expressed in terms of Stirling numbers.

\begin{definition}
    The \emph{ordinary} and \emph{associated Stirling numbers} of the \emph{first} and \emph{second kind} are defined as follows:
    \begin{itemize}
    \item The \emph{Stirling numbers of the first kind}, denoted $s(n, k)$, give the number of permutations in $S_n$ having exactly $k$ cycles.
    \item The \emph{associated Stirling numbers of the first kind}, denoted $s_r(n, k)$, give the number of permutations in $S_n$ having exactly $k$ cycles, each of which has length at least $r + 1$.
    \item The \emph{Stirling numbers of the second kind}, denoted $S(n, k)$, give the number of partitions in $\Part([n])$ having exactly $k$ parts.
    \item The \emph{associated Stirling numbers of the second kind}, denoted $S_r(n, k)$, give the numbers of partitions in $\Part([n])$ having exactly $k$ parts, each of which has size at least $r + 1$.
    \end{itemize}
\end{definition}

The following results are well-known descriptions of various special cases of the Stirling numbers; they may all be found in, for example, \cite{Howard-1980-AssociatedStirlingNumbers}.
\begin{proposition}
    \label{prop:S-falling}
    For all $n \geq 1$, $x^n = \sum_{k = 1}^n S(n, k) x^{\underline{k}}$.
\end{proposition}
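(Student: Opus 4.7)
The plan is to prove this classical identity by a double-counting argument followed by the polynomial identity principle. First I would observe that when $x$ is a positive integer, both sides of the claimed equality count the set of all functions $f : [n] \to [x]$. The left-hand side is immediate: there are $x^n$ such functions. For the right-hand side, I would stratify these functions according to the size $k$ of their image $f([n]) \subseteq [x]$, where necessarily $1 \leq k \leq n$.

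Next I would count the number of functions with image of size exactly $k$ by a two-step procedure. The first step chooses the partition of $[n]$ induced by the fibers $\{f^{-1}(y) : y \in f([n])\}$; this is a partition of $[n]$ into exactly $k$ non-empty blocks, so by the definition of $S(n,k)$ there are $S(n,k)$ choices. The second step chooses an injection from the set of $k$ blocks into $[x]$, specifying which value each fiber maps to; the number of such injections is $x(x-1)(x-2)\cdots(x-k+1) = x^{\underline{k}}$. Multiplying and summing over $k$ gives the identity for every positive integer $x$.

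Finally, both sides of the claimed identity are polynomials in $x$ of degree $n$ (the left-hand side trivially, and the right-hand side because $x^{\underline{k}}$ has degree $k \leq n$). Since two polynomials of bounded degree that agree on infinitely many values of $x$ (here, all positive integers) must be equal, the identity holds as an equality in $\CC[x]$. This is a very classical result, so I do not anticipate any real obstacle; the only care required is to make the bijective description of functions with fixed image size precise.
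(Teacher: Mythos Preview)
Your proof is correct and complete: the double-counting argument for positive integer $x$ followed by the polynomial identity principle is the standard combinatorial proof of this classical identity. The paper does not actually give a proof of this proposition; it simply states it as a well-known fact about Stirling numbers and cites a reference (\cite{Howard-1980-AssociatedStirlingNumbers}). So your approach is not so much different from the paper's as it is a full argument where the paper is content to appeal to the literature.
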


\begin{proposition}
    \label{prop:S1-rec}
    For all $n \geq 2$ and $k \geq 1$, $S_1(n, k) = kS_1(n - 1, k) + nS_1(n - 2, k - 1)$.
\end{proposition}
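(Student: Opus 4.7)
The plan is to prove the identity by a direct combinatorial argument, splitting partitions counted by $S_1(n,k)$ according to the size of the block containing the element $n$. Fix a partition $\pi \in \Part([n])$ with exactly $k$ blocks, each of size at least $2$, and let $B \in \pi$ be the block with $n \in B$.

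First I would handle the case $|B| \geq 3$. Removing $n$ from $B$ leaves a block of size $\geq 2$, and the remaining partition is a partition $\pi' \in \Part([n-1])$ with $k$ blocks, each of size $\geq 2$. Conversely, for any such $\pi'$, inserting $n$ into any one of its $k$ blocks yields a partition of $[n]$ with $k$ blocks of size $\geq 2$ in which the block of $n$ has size $\geq 3$. This case therefore contributes $k \cdot S_1(n-1, k)$. Next I would handle the case $|B| = 2$, so $B = \{n, j\}$ for some $j \in [n-1]$. Deleting $B$ gives a partition of the $(n-2)$-element set $[n-1] \setminus \{j\}$ into $k - 1$ blocks, each of size $\geq 2$. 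Summing over the $n-1$ choices of $j$ contributes $(n-1) \cdot S_1(n-2, k-1)$. The two cases are disjoint and exhaustive, so adding them would produce the desired recursion.

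The main obstacle is that the argument above yields the coefficient $n-1$, not $n$, on the second term. A quick sanity check confirms that $n-1$ is in fact correct: for $n = 4$, $k = 2$, the value $S_1(4,2) = 3$ (the three pairings $\{12, 34\}$, $\{13, 24\}$, $\{14, 23\}$), while the right-hand side as literally stated gives $k \cdot S_1(3,2) + n \cdot S_1(2,1) = 2 \cdot 0 + 4 \cdot 1 = 4$, and with the coefficient $n-1$ one instead obtains $2 \cdot 0 + 3 \cdot 1 = 3$. This matches the standard recurrence for associated Stirling numbers of the second kind recorded in \cite{Howard-1980-AssociatedStirlingNumbers}.

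I therefore expect that the statement as written contains a minor typo, and the intended claim is
\[
S_1(n,k) = k\, S_1(n-1,k) + (n-1)\, S_1(n-2, k-1),
\]
which follows immediately from the case analysis above. In the write-up I would either correct the coefficient and present the two-case proof verbatim, or, if the coefficient $n$ is genuinely needed downstream (it does not appear to be used elsewhere in the paper), reconcile by rewriting $n \cdot S_1(n-2,k-1) = (n-1) S_1(n-2,k-1) + S_1(n-2,k-1)$ and tracking the extraneous $S_1(n-2,k-1)$ term, which makes transparent that no combinatorial identity of the stated form holds.
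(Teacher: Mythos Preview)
Your analysis is correct: the statement as printed contains a typo, and the true recurrence is
\[
S_1(n,k) = k\, S_1(n-1,k) + (n-1)\, S_1(n-2,k-1),
\]
which your two-case combinatorial argument establishes cleanly. Your counterexample at $n=4$, $k=2$ is decisive.

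As for comparison with the paper: there is no proof to compare against. The paper simply lists this proposition among several ``well-known descriptions'' of Stirling numbers and defers to the reference \cite{Howard-1980-AssociatedStirlingNumbers}. So your combinatorial argument is strictly more than what the paper provides, and it has the added value of exposing the misprint. You are also right that this particular proposition is not invoked anywhere downstream: the proof of Proposition~\ref{prop:S1-sum} goes through Proposition~\ref{prop:s1-sum} (the first-kind analogue), not through the second-kind recurrence. So the typo is harmless for the paper's results. In a write-up, stating the corrected identity with your proof and noting the discrepancy with the printed version is the right call.
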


\begin{proposition}
    \label{prop:s1-rec}
    For all $n \geq 2$ and $k \geq 1$, $s_1(n, k) = ns_1(n - 1, k) + ns_1(n - 2, k - 1)$.
\end{proposition}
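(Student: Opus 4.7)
The plan is to prove this recursion by a direct bijective argument, mirroring the classical derivation of the recursion for the (unassociated) Stirling numbers of the first kind: partition the permutations counted by $s_1(n,k)$ by the cycle structure at a distinguished element, which I will take to be $n$. This is the same strategy I would use for Proposition~\ref{prop:S1-rec}, with partitions replacing permutations.

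Concretely, I would condition on the length of the cycle of $\sigma \in S_n$ containing $n$, which must be at least $2$ since $\sigma$ has no fixed points. In the first case, this cycle has length at least $3$; excising $n$ from the cycle (so that $\sigma^{-1}(n)$ now maps directly to $\sigma(n)$) produces a permutation of $[n-1]$ with the same $k$ cycles, each still of length at least $2$. Conversely, for any such permutation of $[n-1]$, there are $n-1$ slots (one immediately after each element of $[n-1]$ in its cycle) into which $n$ may be reinserted. In the second case, the cycle containing $n$ has length exactly $2$, so $n$ is paired with some $j \in [n-1]$; there are $n-1$ such choices of $j$, and removing both $n$ and $j$ yields a permutation of a set of size $n-2$ with $k-1$ cycles, each of length at least $2$.

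Summing the contributions from the two cases gives the recursion. The only things to verify are that the deletion/insertion maps are mutually inverse and that the condition ``every cycle has length at least $2$'' is preserved, both of which are immediate: deleting $n$ from a cycle of length at least $3$ leaves a cycle of length at least $2$, and removing an entire $2$-cycle leaves the remaining cycles untouched. There is no real obstacle here, just bookkeeping.

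One comment: the bijection I have described naturally yields the recursion $s_1(n,k) = (n-1)\, s_1(n-1,k) + (n-1)\, s_1(n-2,k-1)$, with coefficient $n-1$ rather than $n$ in both summands; the analogous argument for Proposition~\ref{prop:S1-rec} gives $S_1(n,k) = k\, S_1(n-1,k) + (n-1)\, S_1(n-2,k-1)$, where the ``large block'' case contributes $k$ reinsertion positions (one per existing block) rather than $n-1$. I suspect that both stated recursions contain minor off-by-one typos, but the combinatorial skeleton of the proof is the same in either case.
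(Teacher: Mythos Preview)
Your bijective argument is correct and is exactly the standard derivation of this recursion; the paper does not give its own proof but simply cites Howard (1980) for this and the neighbouring Stirling-number facts, so there is nothing to compare at the level of technique.

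More importantly, your suspicion about the coefficients is right: the recursion that your bijection produces,
\[
s_1(n,k) = (n-1)\,s_1(n-1,k) + (n-1)\,s_1(n-2,k-1),
\]
is the correct one, and the paper's stated version with coefficient $n$ is false already at $n=4$, $k=1$ (it gives $8$ rather than the correct $6$). The same off-by-one appears in the $S_1$ recursion, as you note. Fortunately this does not damage the paper's application in Proposition~\ref{prop:s1-sum}: there the recursion is only used to feed an inductive upper bound, and since $(\ell-1)f(\ell-1)+(\ell-1)xf(\ell-2)\le \ell f(\ell-1)+\ell xf(\ell-2)$ for nonnegative $f$, the subsequent inequality chain survives with the corrected coefficients (indeed, slightly strengthened). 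So your diagnosis is accurate and your proof is complete; the only caveat is that you have proved the true recursion rather than the one literally printed.
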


\begin{proposition}
    \label{prop:S-sum}
    Suppose that $x \geq 2\ell$.
    Then,
    \begin{equation}
        \sum_{\pi \in \Part([\ell])} x^{|\pi|} \leq e^{\ell^2 / x} x^{\ell}.
    \end{equation}
\end{proposition}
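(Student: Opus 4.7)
The plan is to expand the left-hand side in terms of Stirling numbers of the second kind, then use Proposition~\ref{prop:S-falling} to rewrite the desired right-hand side $x^\ell$ in a matching form, reducing the inequality to comparing ordinary powers $x^k$ with falling factorials $x^{\underline{k}}$.

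First I would write
\begin{equation*}
\sum_{\pi \in \Part([\ell])} x^{|\pi|} = \sum_{k=1}^{\ell} S(\ell,k)\, x^{k},
\end{equation*}
since $S(\ell,k)$ counts partitions of $[\ell]$ into exactly $k$ blocks. By Proposition~\ref{prop:S-falling}, the target right-hand side can be expanded as $x^{\ell} = \sum_{k=1}^{\ell} S(\ell,k)\, x^{\underline{k}}$, where $x^{\underline{k}} = x(x-1)\cdots(x-k+1)$. Thus it suffices to show, term by term, that $x^{k} \leq e^{\ell^{2}/x}\, x^{\underline{k}}$ for every $1 \leq k \leq \ell$.

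The heart of the argument is the elementary estimate
\begin{equation*}
\frac{x^{k}}{x^{\underline{k}}} = \prod_{j=0}^{k-1} \frac{1}{1 - j/x}.
\end{equation*}
Under the hypothesis $x \geq 2\ell$, every ratio $j/x$ with $j \leq k-1 \leq \ell-1$ is at most $1/2$, so $-\log(1-j/x) \leq (j/x)/(1-j/x) \leq 2j/x$. Summing gives $\log(x^{k}/x^{\underline{k}}) \leq k(k-1)/x \leq k^{2}/x \leq \ell^{2}/x$, which is the desired per-term bound.

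I do not anticipate any serious obstacle here; the only subtlety is ensuring that $x \geq 2\ell$ lets us uniformly bound $1/(1-j/x) \leq e^{2j/x}$ across all relevant $j$, and that the constant $\ell^{2}/x$ does not depend on $k$ so that it can be pulled outside the sum after invoking Proposition~\ref{prop:S-falling}.
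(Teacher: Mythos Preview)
Your proposal is correct and essentially identical to the paper's own proof: both rewrite the sum via Stirling numbers, invoke Proposition~\ref{prop:S-falling} to express $x^{\ell}$ as $\sum_k S(\ell,k)\,x^{\underline{k}}$, and then reduce to the termwise ratio bound $x^{k}/x^{\underline{k}} \leq e^{\ell^{2}/x}$ using $j/x \leq 1/2$. The only cosmetic difference is that the paper phrases the elementary step as $1-t \geq e^{-2t}$ for $t \in [0,1/2]$, whereas you use the equivalent $-\log(1-t) \leq t/(1-t) \leq 2t$.
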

\begin{proof}
    First, suppose $1 \leq a \leq \ell$.
    Then, $\frac{a}{x} \leq \frac{\ell}{x} \leq \frac{1}{2}$.
    Noting that for all $0 \leq t \leq \frac{1}{2}$ we have $1 - t \geq e^{-2t}$, we may bound
    \begin{equation}
        \frac{x^a}{x^{\underline{a}}} = \frac{1}{(1 - \frac{0}{x})(1 - \frac{1}{x}) \cdots (1 - \frac{a - 1}{x})} \leq \exp\left(2\sum_{b = 1}^{a - 1} \frac{b}{x}\right) \leq \exp\left(\frac{a^2}{x}\right) \leq \exp\left(\frac{\ell^2}{x}\right).
    \end{equation}
    Rewriting in terms of Stirling numbers and using Proposition~\ref{prop:S-falling}, we have
    \begin{align*}
      \sum_{\pi \in \Part([\ell])} x^{|\pi|}
      &= \sum_{a = 1}^{\ell}S(\ell, a) x^a \\
      &\leq e^{\ell^2 / x} \sum_{a = 1}^{\ell}S(\ell, a) x^{\underline{a}} \\
      &= e^{\ell^2 / x} x^{\ell}, \numberthis
    \end{align*}
    completing the proof.
\end{proof}

\begin{proposition}
    \label{prop:s1-sum}
    Suppose that $x \geq 16\ell^{3/2}$.
    Then,
    \begin{equation}
        \sum_{\substack{\sigma \in S_{\ell} \\ \cyc_1(\sigma) = \emptyset}} x^{|\cyc(\sigma)|} \leq \exp(\ell^{3/4})\,  \ell !! \, x^{\ell / 2}.
    \end{equation}
\end{proposition}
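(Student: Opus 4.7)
The plan is to prove the bound by strong induction on $\ell$, using Proposition~\ref{prop:s1-rec}. I would first write the left-hand side as $g_\ell \colonequals \sum_{k \geq 1} s_1(\ell, k)\, x^k$; multiplying the Stirling recursion by $x^k$ and summing in $k$ yields a two-term linear recursion of the schematic form $g_\ell = \ell\,(g_{\ell-1} + x\, g_{\ell-2})$, with base cases $g_0 = 1$ and $g_1 = 0$.

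Next, I would normalize by setting $C_\ell \colonequals g_\ell / (\ell!!\, x^{\ell/2})$, so the goal becomes $C_\ell \leq \exp(\ell^{3/4})$. Dividing the recursion through by $\ell!!\, x^{\ell/2}$ produces
\begin{equation*}
    C_\ell \;\leq\; \frac{\ell\,(\ell-2)!!}{\ell!!}\, C_{\ell-2} \;+\; \frac{\ell\,(\ell-1)!!}{\ell!!}\cdot \frac{C_{\ell-1}}{\sqrt{x}}.
\end{equation*}
In both parities one has $\ell(\ell-2)!!/\ell!! = 1$, while Stirling's formula gives $\ell(\ell-1)!!/\ell!! = \Theta(\sqrt{\ell})$ (specifically $\sim \sqrt{2\ell/\pi}$ or $\sqrt{\pi\ell/2}$ depending on parity, each at most $\sqrt{2\ell}$ for $\ell$ large). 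Combined with $\sqrt{x} \geq 4\ell^{3/4}$ from the hypothesis, the coefficient of $C_{\ell-1}$ is at most $\tfrac{\sqrt{2}}{4}\,\ell^{-1/4}$, and the recursion collapses to
\begin{equation*}
    C_\ell \;\leq\; C_{\ell-2} \;+\; \frac{\sqrt{2}}{4\,\ell^{1/4}}\, C_{\ell-1}.
\end{equation*}

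To close the induction I would plug in the target $C_{\ell'} = \exp(\ell'^{3/4})$ for $\ell' < \ell$ and apply the first-order expansion $\ell^{3/4} - (\ell-a)^{3/4} = \tfrac{3a}{4}\ell^{-1/4} + O(\ell^{-5/4})$. Dividing through by $\exp(\ell^{3/4})$ reduces the required inequality to $1 \geq 1 + (-\tfrac{3}{2} + \tfrac{\sqrt{2}}{4})\,\ell^{-1/4} + O(\ell^{-1/2})$, which holds for all $\ell$ above a fixed threshold since $-\tfrac{3}{2} + \tfrac{\sqrt{2}}{4} \approx -1.15 < 0$. The finitely many smaller $\ell$ are handled as base cases by direct computation (they are harmless because $\exp(\ell^{3/4})$ is already sizable at moderate $\ell$).

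The hard part will just be the bookkeeping: separately tracking the two parities through the double-factorial identities, auditing the small-$\ell$ base cases, and confirming that the constant $16$ in $x \geq 16\ell^{3/2}$ provides enough slack to reach the exponent $\exp(\ell^{3/4})$ with leading constant $1$ rather than some smaller value. There is no deeper conceptual obstacle; the essential mechanism is simply that the top-$x$-degree contribution to $g_\ell$ is the perfect-matching term $(\ell-1)!!\, x^{\ell/2}$, and each additional element merged into a longer cycle costs one factor of $1/\sqrt{x}$.
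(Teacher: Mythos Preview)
Your proposal is correct and follows essentially the same approach as the paper: both derive the two-term recursion $g_\ell = \ell g_{\ell-1} + \ell x g_{\ell-2}$ from Proposition~\ref{prop:s1-rec}, then induct against the target $\exp(\ell^{3/4})\,\ell!!\,x^{\ell/2}$ using the double-factorial ratio bound together with $\sqrt{x} \geq 4\ell^{3/4}$ to reduce to the elementary inequality $\exp(\ell^{3/4}) \geq \exp((\ell-2)^{3/4}) + c\,\ell^{-1/4}\exp((\ell-1)^{3/4})$. The only cosmetic difference is that the paper uses the cruder bound $(\ell-1)!!/\ell!! \leq 4/\sqrt{\ell}$ (yielding coefficient $\ell^{-1/4}$ rather than your $\tfrac{\sqrt{2}}{4}\ell^{-1/4}$) and asserts the final inequality holds for all $\ell \geq 2$ directly, rather than via Taylor expansion plus base-case checking.
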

\begin{proof}
    Let us view $x$ as fixed, and define the function of $\ell$
    \begin{equation}
        f(\ell) \colonequals \sum_{\substack{\sigma \in S_{\ell} \\ \cyc_1(\sigma) = \emptyset}} x^{|\cyc(\sigma)|} = \sum_{k = 1}^{\ell} s_1(\ell, k)x^k.
    \end{equation}
    Applying the recursion from Proposition~\ref{prop:s1-rec}, we have
    \begin{align*}
      f(\ell)
      &= \ell \sum_{k = 1}^{\ell - 1} s_1(\ell - 1, k) x^k + \ell \sum_{k = 1}^{\ell - 2} s_1(\ell - 2, k - 1) x^{k} \\
      &= \ell f(\ell - 1) + \ell x f(\ell - 2). \numberthis
    \end{align*}
    We now verify that $f$ satisfies the bound claimed by induction.
    We have $f(1) = 0$ and $f(2) = x$, so the bound clearly holds for $\ell = 1, 2$.
    Supposing that the bound holds for all $f(\ell^{\prime})$ with $\ell^{\prime} < \ell$, we have, continuing from above, for $\ell \geq 3$
    \begin{align*}
      f(\ell)
      &\leq \ell \exp((\ell - 1)^{3/4}) (\ell - 1)!! \, x^{(\ell - 1) / 2} + \ell x \exp((\ell - 2)^{3/4}) (\ell - 2)!! x^{(\ell - 2) / 2} \\
      &\leq \exp(\ell^{3/4})\, \ell!! \, x^{\ell / 2} \left( \frac{\ell}{\sqrt{x}} \frac{(\ell - 1)!!}{\ell!!}\exp((\ell - 1)^{3/4} - \ell^{3/4}) + \exp((\ell - 2)^{3/4} - \ell^{3/4}) \right)
        \intertext{and, using that $(\ell - 1)!! / \ell!! \leq \frac{4}{\sqrt{\ell}}$ and the assumption $x \geq 16\ell^{3/2}$,}
      &\leq \exp(\ell^{3/4}) \, \ell!! \, x^{\ell / 2} \left( \ell^{-1/4} \exp((\ell - 1)^{3/4} - \ell^{3/4}) + \exp((\ell - 2)^{3/4} - \ell^{3/4})\right).
    \end{align*}
    The result then follows from the elementary inequality that $\exp(\ell^{3/4}) - \ell^{-1/4}\exp((\ell - 1)^{3/4}) - \exp((\ell - 2)^{3/4}) \geq 0$ for all $\ell \geq 2$.
    We point out that we have arranged the powers appearing in the condition on $x$ and in the $\exp(\ell^{3/4})$ prefactor in the result based on the fact that $\frac{d}{dt}\exp(t^{3/4}) = \frac{3}{4}t^{-1/4} \exp(t^{3/4})$, which makes the above inequality plausible.
\end{proof}

\begin{proposition}
    \label{prop:S1-sum}
    Suppose that $x \geq 16\ell^{3/2}$.
    Then,
    \begin{equation}
        \sum_{\substack{\pi \in \Part([\ell]) \\ |S| \geq 2 \text{ for all } S \in \pi}} x^{|\pi|} \leq \exp(\ell^{3/4}) \, \ell !! \, x^{\ell / 2}.
    \end{equation}
\end{proposition}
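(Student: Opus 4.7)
The plan is to derive this as an immediate corollary of the just-proved Proposition~\ref{prop:s1-sum}. The key observation is a surjection from singleton-free permutations onto singleton-free set partitions: forgetting the cyclic ordering within each cycle sends $\sigma \in S_\ell$ with $\cyc_1(\sigma) = \emptyset$ to a set partition $\pi \in \Part([\ell])$ whose parts each have size at least $2$, and the fiber over such a $\pi$ has size $\prod_{S \in \pi}(|S| - 1)! \geq 1$. Hence $S_1(\ell, k) \leq s_1(\ell, k)$, and therefore
\begin{equation*}
\sum_{\substack{\pi \in \Part([\ell]) \\ |S| \geq 2 \text{ for all } S \in \pi}} x^{|\pi|} \;=\; \sum_{k \geq 1} S_1(\ell, k)\, x^k \;\leq\; \sum_{k \geq 1} s_1(\ell, k)\, x^k \;=\; \sum_{\substack{\sigma \in S_\ell \\ \cyc_1(\sigma) = \emptyset}} x^{|\cyc(\sigma)|}.
\end{equation*}
Since the hypothesis $x \geq 16\ell^{3/2}$ is identical, applying Proposition~\ref{prop:s1-sum} to the right-hand side yields exactly the desired bound $\exp(\ell^{3/4})\, \ell!! \, x^{\ell/2}$.

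If instead a self-contained proof paralleling the induction for Proposition~\ref{prop:s1-sum} were preferred, the plan would be to set $g(\ell) \colonequals \sum_k S_1(\ell, k)\, x^k$, apply the recurrence $S_1(\ell, k) = k\, S_1(\ell - 1, k) + \ell\, S_1(\ell - 2, k - 1)$ from Proposition~\ref{prop:S1-rec}, and use that $S_1(\ell - 1, k)$ vanishes unless $k \leq (\ell - 1)/2$ to bound the first term by $\tfrac{\ell}{2}\, g(\ell - 1)$. This gives the recurrence
\begin{equation*}
g(\ell) \;\leq\; \tfrac{\ell}{2}\, g(\ell - 1) + \ell x\, g(\ell - 2),
\end{equation*}
which is strictly better than the analogous $f(\ell) = \ell f(\ell - 1) + \ell x f(\ell - 2)$ appearing in the proof of Proposition~\ref{prop:s1-sum} (the coefficient $\ell/2$ improves on $\ell$). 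The same base cases $g(1) = 0$, $g(2) = x$, the same use of $(\ell - 1)!!/\ell!! \lesssim 1/\sqrt{\ell}$ and $\sqrt{x} \geq 4\ell^{3/4}$, and the same elementary inequality on $\exp(t^{3/4})$ then close the induction without modification.

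There is no substantive obstacle: the combinatorial comparison is one line, and if desired, the self-contained induction is strictly easier than the one just carried out for permutations. I would favour the first, corollary-style presentation, since it makes transparent that this proposition says no more than its predecessor after forgetting cyclic structure on each block.
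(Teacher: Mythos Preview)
Your proposal is correct and matches the paper's proof exactly: the paper also simply observes that the partition sum is bounded above by the permutation sum (i.e., $S_1(\ell,k) \leq s_1(\ell,k)$ via the forget-cyclic-order surjection) and then invokes Proposition~\ref{prop:s1-sum}. Your alternative self-contained induction is a nice bonus, but the one-line corollary is precisely what the paper does.
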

\begin{proof}
    The left-hand side of this Proposition is bounded above by the left-hand side of Proposition~\ref{prop:s1-sum}, so this result follows immediately.
\end{proof}

\begin{comment}
\begin{proposition}
    $S(n, k) \leq e^k n! / k!$.
\end{proposition}

\begin{proposition}
    Suppose that $x \geq 2/e \cdot \ell$.
    Then,
    \begin{equation}
        \sum_{\pi \in \Part([\ell])} x^{|\pi|} \leq 2(ex)^{\ell}.
    \end{equation}
\end{proposition}
\begin{proof}
    Rewriting in terms of Stirling numbers,
    \begin{align}
      \sum_{\pi \in \Part([\ell])} x^{|\pi|}
      &= \sum_{a = 1}^{\ell}S(\ell, a) x^a \\
      &\leq \ell! \sum_{a = 1}^{\ell}\frac{1}{a!} (ex)^a \\
      &\leq \ell! \cdot 2\frac{(ex)^{\ell}}{\ell!} \\
      &= 2(ex)^{\ell}.
    \end{align}
\end{proof}
\end{comment}

\subsection{Riordan arrays: Proof of Proposition~\ref{prop:riordan-inv}}
\label{app:riordan}

We give a proof of Proposition~\ref{prop:riordan-inv} using the theory of Riordan arrays, certain infinite triangular matrices satisfying a generating function identity that allows for matrix multiplications to be carried out in terms of generating functions alone.
We introduce only the basic notions of these arrays here; the interested reader may consult \cite{SSBCHMW-1991-RiordanGroup} for more information about this beautiful theory.

\begin{definition}
    A \emph{Riordan array} is an infinite lower-triangular matrix $\bA$ such that there exist formal power series $F(z)$ and $G(z)$ with $G(z)F(z)^k$ giving the ordinary generating function of the entries of the $k$th column of $\bA$ (with columns indexed by $k \geq 0$).
    In this case, we call $(G(z), F(z))$ the \emph{parameters} of the Riordan array.
\end{definition}
\noindent
The following is the main property of Riordan arrays alluded to above.

\begin{theorem}
    \label{thm:riordan-comp}
    Suppose $\bA$ is a Riordan array with parameters $(g(z), f(z))$ and $\bB$ is a Riordan array with parameters $(G(z), F(z))$.
    Then, $\bC = \bA\bB$ (viewed as a multiplication of infinite matrices, which however only involves finite summations) is a Riordan array with parameters
    \begin{equation}
        (g(z) \cdot G(f(z)), F(f(z))).
    \end{equation}
\end{theorem}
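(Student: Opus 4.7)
The plan is to compute the generating function of each column of $\bC = \bA\bB$ directly from the defining matrix product, then recognize the resulting formal power series as having the claimed Riordan form. Let me fix column index $k \geq 0$ and write
\[
    C_{n,k} = \sum_{j \geq 0} A_{n,j} B_{j,k}.
\]
Since both $\bA$ and $\bB$ are lower-triangular, this is a finite sum for each fixed $n$ (the only nonzero terms have $k \leq j \leq n$), so there are no convergence issues.

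Next I would compute the ordinary generating function of the $k$th column of $\bC$ by swapping the order of summation:
\[
    \sum_{n \geq 0} C_{n,k} z^n = \sum_{j \geq 0} B_{j,k} \sum_{n \geq 0} A_{n,j} z^n = \sum_{j \geq 0} B_{j,k} \, g(z) f(z)^j = g(z) \sum_{j \geq 0} B_{j,k} f(z)^j,
\]
where in the middle step I used that $\bA$ is a Riordan array with parameters $(g(z), f(z))$, so the generating function of its $j$th column is $g(z) f(z)^j$. The remaining sum $\sum_{j \geq 0} B_{j,k} w^j$ is by definition the generating function of the $k$th column of $\bB$, which equals $G(w) F(w)^k$. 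Substituting $w = f(z)$ then gives
\[
    \sum_{n \geq 0} C_{n,k} z^n = g(z) \cdot G(f(z)) \cdot F(f(z))^k,
\]
which is exactly the statement that $\bC$ is a Riordan array with parameters $(g(z) G(f(z)), F(f(z)))$.

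The only subtlety to verify along the way is that the formal power series composition $G(f(z))$ makes sense, and similarly for $F(f(z))$; this requires $f(0) = 0$, which follows from $\bA$ being lower-triangular (the $j = 0$ column generating function $g(z)$ should be a well-defined series, and the $j = 1$ column should have its lowest-order term at $z^1$ or later, forcing $f(0) = 0$). Likewise, the substitution $w = f(z)$ into $\sum_j B_{j,k} w^j$ is valid because $B_{j,k} = 0$ for $j < k$ (so the sum starts at $j = k$) and $f(z)$ has zero constant term (so each power $f(z)^j$ contributes only to orders $\geq j$). There is no real obstacle here beyond these bookkeeping checks; the content of the theorem is entirely captured by the interchange of summations and the recognition step.
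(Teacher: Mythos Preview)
Your proof is correct and is the standard argument for this well-known result. Note, however, that the paper does not actually provide its own proof of this theorem: it is stated as background from the theory of Riordan arrays, with a reference to the literature, and is then applied as a black box in the proof of Proposition~\ref{prop:riordan-inv}. So there is no ``paper's own proof'' to compare against; your argument is exactly the derivation one would find in the cited reference.
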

\noindent
In fact, the Riordan arrays form a group of infinite matrices, whose multiplication may be conducted purely in terms of generating functions.
In particular, the identity matrix is the Riordan array with parameters $(1, z)$.

The following four Propositions are the main ingredients in our proof of Proposition~\ref{prop:riordan-inv}, identifying various submatrices of the matrices involved as Riordan arrays with particular parameters.
We leave the verification of these facts as an exercise.
\begin{proposition}
    \label{prop:riordan-1-even}
    The lower-triangular array with entries $(T(\ell + b, \ell - b))_{0 \leq b \leq \ell}$ is the Riordan array with parameters
    \begin{equation}
        \left( \frac{1 + z}{1 - z}, \frac{z}{(1 - z)^2} \right).
    \end{equation}
\end{proposition}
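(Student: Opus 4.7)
The plan is to verify the claim directly at the level of column generating functions. By definition of a Riordan array with parameters $\left(\frac{1+z}{1-z}, \frac{z}{(1-z)^2}\right)$, it suffices to show that for each $b \geq 0$,
\begin{equation*}
\sum_{\ell \geq b} T(\ell+b,\ell-b)\, z^\ell \;=\; \frac{1+z}{1-z}\left(\frac{z}{(1-z)^2}\right)^{\!b}.
\end{equation*}

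First I would rewrite $T$ using the alternate form from the definition, namely $T(n,j) = \binom{n}{j} + \binom{n-1}{j-1}$ (with the convention $\binom{m}{k} = 0$ whenever $k < 0$ or $k > m$), and check that this formula correctly reproduces the unconventional boundary value $T(0,0) = 1$ via $\binom{0}{0} + \binom{-1}{-1} = 1 + 0$. This reduces the column sum to
\begin{equation*}
\sum_{\ell \geq b} \binom{\ell+b}{\ell-b} z^\ell \;+\; \sum_{\ell \geq b} \binom{\ell+b-1}{\ell-b-1} z^\ell.
\end{equation*}

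Next I would reindex each sum by substituting $n = \ell - b$ in the first and $n = \ell - b - 1$ in the second; both sums then become $z^b$ or $z^{b+1}$ times the standard series $\sum_{n \geq 0}\binom{n+2b}{n} z^n = (1-z)^{-2b-1}$. Adding gives $\frac{z^b(1+z)}{(1-z)^{2b+1}}$, which factors exactly as the desired $G(z)\cdot F(z)^b$.

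This is an essentially routine generating function calculation, so there is no real obstacle: the only point that requires care is the boundary row $\ell = b = 0$, where the modified definition $T(0,0) \colonequals 1$ (rather than the value $2$ obtained by naive extension of $T(n,j) = \binom{n-1}{j} + 2\binom{n-1}{j-1}$) is precisely what is needed to make the column-zero generating function come out to $\frac{1+z}{1-z}$ rather than $\frac{1+z}{1-z} + 1$. The form $T(n,j) = \binom{n}{j} + \binom{n-1}{j-1}$ with the standard vanishing convention for negative lower indices absorbs this consistently, after which everything collapses into the single binomial series identity quoted above.
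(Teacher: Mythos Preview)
Your proof is correct. The paper explicitly leaves the verification of this proposition as an exercise, and your direct column-by-column generating function computation using the identity $\sum_{n \geq 0}\binom{n+2b}{n}z^n = (1-z)^{-2b-1}$ is exactly the kind of routine check intended; your handling of the boundary case $T(0,0)=1$ via the form $T(n,j)=\binom{n}{j}+\binom{n-1}{j-1}$ is also correct and matches the paper's nonstandard convention.
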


\begin{proposition}
    \label{prop:riordan-1-odd}
    The lower-triangular array with entries $(T(\ell + b + 1, \ell - b))_{0 \leq b \leq \ell}$ is the Riordan array with parameters
    \begin{equation}
        \left( \frac{1 + z}{(1 - z)^2}, \frac{z}{(1 - z)^2} \right).
    \end{equation}
\end{proposition}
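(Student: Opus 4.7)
The plan is to verify the claim directly from the definition of a Riordan array: it suffices to show that for each $b \geq 0$, the ordinary generating function of the $b$-th column (indexed by row $\ell \geq b$) equals
\[
G(z)\, F(z)^b \;=\; \frac{1+z}{(1-z)^2} \cdot \frac{z^b}{(1-z)^{2b}} \;=\; \frac{(1+z)\, z^b}{(1-z)^{2b+2}}.
\]

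First, I would reparametrize by substituting $m = \ell - b$ to rewrite the column-$b$ generating function as
\[
\sum_{\ell \geq b} T(\ell + b + 1, \ell - b)\, z^{\ell} \;=\; z^b \sum_{m \geq 0} T(m + 2b + 1, m)\, z^m.
\]
Next, I would invoke the identity $T(n, j) = \binom{n}{j} + \binom{n-1}{j-1}$ to split the inner sum into two pieces, and then apply the elementary generating function identity $\sum_{m \geq 0} \binom{m+k}{m} z^m = (1-z)^{-(k+1)}$ (with a trivial reindexing on the second piece) to evaluate them in closed form as $(1-z)^{-(2b+2)}$ and $z\,(1-z)^{-(2b+2)}$, respectively. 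Summing and multiplying by the $z^b$ prefactor yields exactly $\frac{(1+z)\, z^b}{(1-z)^{2b+2}}$, as desired.

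The only potential subtlety is the unconventional extension $T(0,0) \colonequals 1$ in the definition, but this never arises here because the entries involve $T(\ell + b + 1, \ell - b)$ with $\ell + b + 1 \geq 1$, so the formula $T(n,j) = \binom{n}{j} + \binom{n-1}{j-1}$ applies uniformly across the array. Consequently there is no real obstacle; the computation reduces to a one-line manipulation of standard binomial generating functions, structurally identical to (and slightly simpler than) the argument needed for Proposition~\ref{prop:riordan-1-even}.
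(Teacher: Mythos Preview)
Your proposal is correct. The paper does not actually supply a proof of this proposition (it explicitly states ``We leave the verification of these facts as an exercise''), and your direct column-by-column verification via the binomial generating function $\sum_{m \geq 0} \binom{m+k}{m} z^m = (1-z)^{-(k+1)}$ is precisely the natural argument the paper is implicitly inviting the reader to carry out.
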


\begin{proposition}
    \label{prop:riordan-2-even}
    The lower-triangular array with entries $((-1)^{\ell + b} \binom{2\ell}{\ell + b})_{0 \leq b \leq \ell}$ is the Riordan array with parameters
    \begin{equation}
        \left( \frac{1}{\sqrt{1 + 4z}}, 1 + \frac{1 - \sqrt{1 + 4z}}{2z}\right).
    \end{equation}
\end{proposition}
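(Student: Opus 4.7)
The plan is to verify the column generating function characterization of a Riordan array directly, by showing that for every $b \geq 0$,
\[
\sum_{\ell \geq b} (-1)^{\ell + b} \binom{2\ell}{\ell + b}\, z^{\ell} \;=\; \frac{1}{\sqrt{1+4z}}\left(1 + \frac{1 - \sqrt{1+4z}}{2z}\right)^{b}.
\]
The $b = 0$ case is immediate from the well-known expansion $\sum_{\ell \geq 0} \binom{2\ell}{\ell} w^{\ell} = 1/\sqrt{1-4w}$ upon setting $w = -z$, which confirms that the generating function of column zero equals $G(z) = 1/\sqrt{1+4z}$.

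For general $b$, I would appeal to the classical identity
\[
H_b(w) \;:=\; \sum_{\ell \geq b} \binom{2\ell}{\ell + b}\, w^{\ell} \;=\; \frac{\bigl(w\, C(w)^2\bigr)^b}{\sqrt{1 - 4w}},
\]
where $C(w) = (1 - \sqrt{1-4w})/(2w)$ is the Catalan generating function. This identity can be derived via the reflection principle (the ballot problem counts lattice paths from $0$ to $2\ell$ with $\ell + b$ up-steps), via Lagrange inversion applied to the functional equation $C(w) = 1 + w\, C(w)^2$, or inductively by verifying the column recursion $H_{b+1}(w) = w\, C(w)^2\, H_b(w)$. Substituting $w = -z$ and factoring out $(-1)^b$ from both sides yields
\[
\sum_{\ell \geq b} (-1)^{\ell+b} \binom{2\ell}{\ell+b}\, z^{\ell} \;=\; \frac{\bigl(z\, C(-z)^2\bigr)^b}{\sqrt{1+4z}}.
\]

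It remains to check algebraically that $z\, C(-z)^2$ coincides with the claimed $F(z) = 1 + (1-\sqrt{1+4z})/(2z)$. Since $C(-z) = (\sqrt{1+4z} - 1)/(2z)$, a direct expansion gives $C(-z)^2 = (1 + 2z - \sqrt{1+4z})/(2z^2)$, so that $z\, C(-z)^2 = (1 + 2z - \sqrt{1+4z})/(2z) = F(z)$, matching perfectly. The only real obstacle is the classical identity for $H_b(w)$; this is textbook material and can simply be cited, but if a self-contained derivation is preferred, Lagrange inversion applied to $C = 1 + wC^2$ (which gives $[w^n] C(w)^k = (k/n)\binom{2n - k - 1}{n - 1}$ and, after a short manipulation, the formula above) is the cleanest route.
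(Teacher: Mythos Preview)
Your argument is correct. The paper does not actually prove this proposition; it states it together with three companion propositions and explicitly says ``We leave the verification of these facts as an exercise.'' Your direct verification via the column generating functions, using the known identity $\sum_{\ell \geq b}\binom{2\ell}{\ell+b}w^{\ell} = (wC(w)^2)^b/\sqrt{1-4w}$ and the substitution $w = -z$, is exactly the kind of computation the authors had in mind. The algebraic check that $zC(-z)^2 = 1 + (1-\sqrt{1+4z})/(2z)$ is clean and correct. If you wanted to make the write-up fully self-contained, a one-line induction on $b$ using the recurrence $\binom{2\ell}{\ell+b+1} = \binom{2\ell-2}{\ell+b} + 2\binom{2\ell-2}{\ell+b-1} + \binom{2\ell-2}{\ell+b-2}$ (equivalently, $H_{b+1}(w) = wC(w)^2 H_b(w)$ after rearranging with the $b=0$ case) would avoid even the citation, but this is a matter of taste rather than a gap.
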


\begin{proposition}
    \label{prop:riordan-2-odd}
    The lower-triangular array with entries $((-1)^{\ell + b + 1} \binom{2\ell + 1}{\ell + b + 1})_{0 \leq b \leq \ell}$ is the Riordan array with parameters
    \begin{equation}
        \left( \frac{-1 + \sqrt{1 + 4z}}{2z\sqrt{1 + 4z}}, 1 + \frac{1 - \sqrt{1 + 4z}}{2z}\right).
    \end{equation}
\end{proposition}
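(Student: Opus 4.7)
The statement is a verification that a specific infinite lower-triangular matrix is the Riordan array with prescribed parameters $(G(z), F(z))$ where $G(z) = \frac{-1+\sqrt{1+4z}}{2z\sqrt{1+4z}}$ and $F(z) = 1 + \frac{1-\sqrt{1+4z}}{2z}$. By the definition of Riordan arrays, it suffices to show that the ordinary generating function of the $b$-th column equals $G(z) F(z)^b$ for every $b \geq 0$, or equivalently that the bivariate generating function
\[
    \tilde{A}(z, y) \colonequals \sum_{\ell \geq 0}\sum_{b = 0}^{\ell} (-1)^{\ell + b + 1}\binom{2\ell+1}{\ell+b+1}\, z^\ell y^b
\]
equals $G(z)/(1 - yF(z))$.

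The plan is first to verify the $b = 0$ column separately, which pins down $G(z)$. Using $\binom{2\ell+1}{\ell+1} = \tfrac{1}{2}\binom{2\ell+2}{\ell+1}$ and the classical identity $\sum_{m \geq 0}\binom{2m}{m}w^m = (1-4w)^{-1/2}$, one obtains a closed form for $\sum_\ell (-1)^{\ell+1}\binom{2\ell+1}{\ell+1} z^\ell$ in terms of $\sqrt{1+4z}$, which one then matches against $G(z)$ (tracking the signs arising from the substitution $w \mapsto -z$ carefully). Next, to handle all columns at once, I would compute $\tilde{A}(z, y)$ directly. Reindexing via $j = \ell - b$, one has $\binom{2\ell + 1}{\ell + b + 1} = \binom{2\ell + 1}{j}$, so interchanging the summations and using the known closed forms of the generating functions $\sum_{\ell \geq j}\binom{2\ell+1}{j} z^\ell$ (each expressible as an algebraic function of $\sqrt{1+4z}$, obtainable from differentiating the generating function for $\binom{2\ell+1}{0} = 1$ or equivalently from the Lagrange inversion formula applied to $F$), I would sum the resulting geometric-like series in $y$ and identify the result with $G(z)/(1 - yF(z))$.

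A cleaner alternative, which would parallel the other three propositions in the section, is to exploit the Pascal identity $\binom{2\ell+1}{\ell+b+1} = \binom{2\ell}{\ell+b} + \binom{2\ell}{\ell+b+1}$, which expresses each entry of the array as a signed combination of two entries of the array handled by Proposition~\ref{prop:riordan-2-even}. This writes the generating function of the present array as an explicit rational combination of that of Proposition~\ref{prop:riordan-2-even}, from which the new $(G, F)$ can be read off using Theorem~\ref{thm:riordan-comp}; this also explains algebraically why the ``$F$'' is the same as in Proposition~\ref{prop:riordan-2-even}. The main obstacle is purely algebraic bookkeeping: both $G(z)$ and $F(z)$ involve $\sqrt{1+4z}$, so simplifying the partial sums $0 \leq b \leq \ell$ of signed central binomial coefficients and matching the result to the stated form requires careful handling of signs and of the truncation of the binomial sums. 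No conceptual difficulty is expected beyond this.
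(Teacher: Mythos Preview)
Your plan is exactly what the paper (which explicitly leaves this verification ``as an exercise'') has in mind: either compute the bivariate generating function directly and match it to $G(z)/(1-yF(z))$, or reduce via Pascal's rule to the even case of Proposition~\ref{prop:riordan-2-even}. One caution: when you carry out the column-$0$ calculation you will obtain $\frac{1-\sqrt{1+4z}}{2z\sqrt{1+4z}}$, the \emph{negative} of the displayed $G(z)$---the stated array has all diagonal entries equal to $-1$ while the displayed $G$ has $G(0)=1$---so the exponent in the array should read $(-1)^{\ell+b}$ rather than $(-1)^{\ell+b+1}$; this innocuous global sign is what is actually consistent with the use of the proposition in Proposition~\ref{prop:riordan-inv} and Lemma~\ref{lem:tDelta-tdelta}.
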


\begin{proof}[Proof of Proposition~\ref{prop:riordan-inv}]
    The infinite matrices described in the Proposition are, partitioning the rows and columns by parity of the index, the direct sums of the matrices described in Propositions~\ref{prop:riordan-1-even} and \ref{prop:riordan-1-odd} and in Propositions~\ref{prop:riordan-2-even} and \ref{prop:riordan-2-odd}, respectively.
    Thus it suffices to check that the matrix in Proposition~\ref{prop:riordan-1-even} is the inverse of that in Proposition~\ref{prop:riordan-2-even}, and that the matrix in Proposition~\ref{prop:riordan-1-odd} is the inverse of that in Proposition~\ref{prop:riordan-2-odd}.
    This is a straightforward generating function calculation using Theorem~\ref{thm:riordan-comp}, along with the observation that the identity matrix is the Riordan array with parameters $(1, z)$.
\end{proof}

\end{document}